\documentclass[smallextended]{article}
\usepackage{amsmath,amssymb,color,url,mathtools}
\usepackage[shortlabels]{enumitem}
\usepackage{amsthm}
\usepackage[mathscr]{euscript}
\usepackage{float}
\usepackage{graphicx}
\DeclareGraphicsRule{*}{mps}{*}{}
\usepackage[left=1.5in,right=1.5in,top=1in,bottom=1in]{geometry}
\usepackage{titlesec}

\newtheorem{thm}{Theorem}
\newtheorem{lem}[thm]{Lemma}

\newtheorem{cor}[thm]{Corollary}

\newtheorem{conjecture}{Conjecture}

\theoremstyle{definition}
\newtheorem{definition}{Definition}
\newtheorem{example}{Example}
\newtheorem*{examples}{Examples}

\newtheorem*{remark}{Remark}

\newtheorem*{mainquestion}{Main Question}

\titleformat*{\section}{\large \bfseries}
\titleformat*{\subsection}{\normalsize \bfseries}

\setenumerate[0]{label=(\alph*)}


\newcounter{commentcounter}

\newcommand{\Proj}{\mathbb P}
\newcommand{\R}{\mathbb R}
\newcommand{\RP}{\R \Proj}

\newcommand{\Z}{\mathbb Z}

\newcommand{\rank}{\operatorname{rank}}

\newcommand{\dist}{\operatorname{dist}}
\newcommand{\Gr}{\operatorname{Gr}}
\newcommand{\conv}{\operatorname{Conv}}
\newcommand{\Span}{\operatorname{Span}}


\begin{document}

\title{Fibrations of $\R^3$ by oriented lines}
\author{Michael Harrison\footnote{Portions of this work were completed while the author was in residence at MSRI during the Fall 2018 semester and supported by NSF Grant DMS-1440140.}}

\maketitle

\begin{abstract}
A fibration of $\R^3$ by oriented lines is given by a unit vector field $V : \R^3 \to S^2$, for which all of the integral curves are oriented lines.  A line fibration is called skew if no two fibers are parallel.  Skew fibrations have been the focus of recent study, in part due to their close relationships with great circle fibrations of $S^3$ and with tight contact structures on $\R^3$.  Both geometric and topological classifications of the space of skew fibrations have appeared; these classifications rely on certain rigid geometric properties exhibited by skew fibrations.  Here we study these properties for line fibrations which are not necessarily skew, and we offer some partial answers to the question: in what sense do nonskew fibrations look and behave like skew fibrations?  We develop and utilize a technique, called the parallel plane pushoff, for studying nonskew fibrations.  In addition, we summarize the known relationship between line fibrations and contact structures, and we extend these results to give a complete correspondence.  Finally, we develop a technique for generating nonskew fibrations and offer a number of examples.
\end{abstract}

\section{Introduction and statement of results}

A \emph{fibration of} $\R^3$ \emph{by oriented lines} is given by a unit vector field $V : \R^3 \to S^2$, for which all of the integral curves are oriented lines.

\begin{example} We offer three qualitatively distinct examples.
\label{ex:fibrations}
\begin{enumerate}
\item For any $u \in S^2$, $\R^3$ may be fibered by parallel lines with direction $u$.

\item Choose a linear plane $P_0 \subset \R^3$, foliate $\R^3$ by the parallel planes $P_t$, and foliate each individual plane by parallel lines, whose oriented direction is given by some continuous function $\gamma : \R \to P_0 \cap S^2 = S^1 : t \mapsto \gamma(t)$.

\item Choose an oriented line $\ell$ in $\R^3$, and surround $\ell$ with a family of hyperboloids which foliate $\R^3 - \ell$.  Choosing a ruling of the hyperboloids yields the fibration of $\R^3$ by pairwise skew lines depicted in Figure \ref{fig:hyper}.
\end{enumerate}
\end{example}
\begin{figure}[ht!]
\centerline{
\includegraphics[width=1.55in]{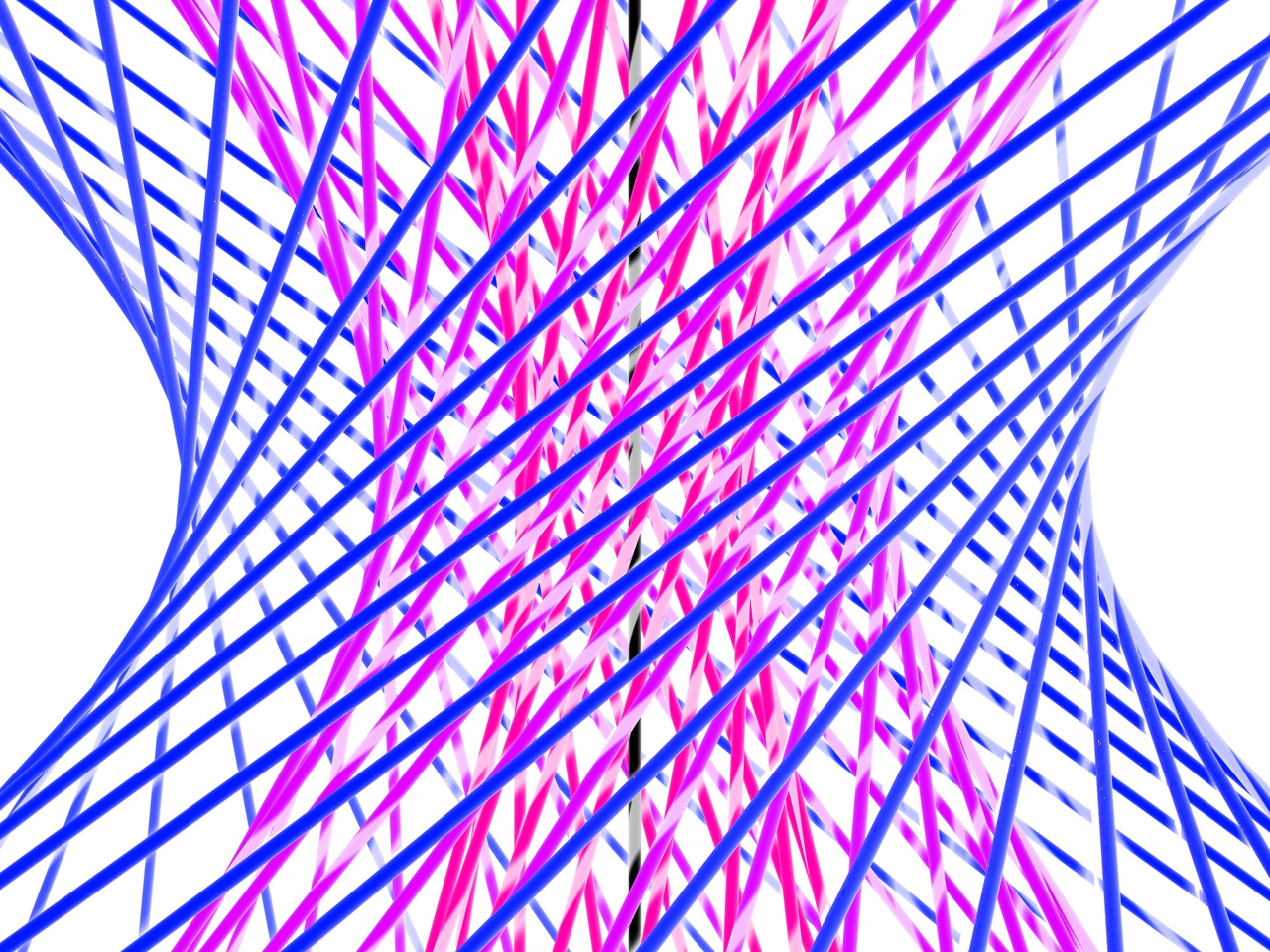}
}
\caption{A skew fibration of $\R^3$ (image by David Eppstein)}
\label{fig:hyper}
\end{figure}

The study of line fibrations originated a decade ago by Salvai \cite{Salvai}, who gave a geometric classification in analogue with the classification of great circle fibrations by Gluck and Warner \cite{GluckWarner}.  More recently, the author studied line fibrations from a topological perspective \cite{Harrison} and studied the relationship of line fibrations with contact structures \cite{Harrison2}; this latter endeavor was continued by Becker and Geiges \cite{BeckerGeiges}.  Ovsienko and Tabachnikov studied fibrations of $\R^n$ by skew affine copies of $\R^p$ (\cite{OvsienkoTabachnikov}, \cite{OvsienkoTabachnikov2}).

Despite the attention received by line fibrations, there are many unanswered questions regarding their rigid geometry.  For example, what subsets of $S^2$ could serve as the image of $V$?  How might such fibrations behave ``at infinity''?  What possible subsets of $\R^3$ could serve as the preimage $V^{-1}(u)$ of any given fiber direction $u$?  Can the image of $V$ contain antipodal points?

These questions have been answered for a special class of line fibrations called \emph{skew fibrations}, whose defining property is that no two fibers are parallel.  The space of skew line fibrations of $\R^3$ is well-studied, in part due to the relationship with the space of great circle fibrations of $S^3$.  For example, the skew fibration depicted in Figure \ref{fig:hyper} is obtained by central projecting the Hopf fibration of $S^3$ to a tangent hyperplane $\R^3$.  For skew fibrations, it is known that the image of $V$ is a convex subset of $S^2$ (\cite{Salvai}) and that skew fibrations exhibit \emph{continuity at infinity} (\cite{Harrison}, see Definition \ref{def:contatinf} in Section \ref{sec:skew}).  By definition of skew fibration, each $V^{-1}(u)$ is a line, and the image of $V$ cannot contain antipodal points.  With these results in mind, the four questions above may be summarized:

\begin{mainquestion}
In what sense do nonskew fibrations look and behave like skew fibrations?
\end{mainquestion}

One class of nonskew fibrations deserves special attention.

\begin{definition}[$1$-parameter fibration]
\label{def:oneparam}
We say that a fibration of $\R^3$ by oriented lines is a \emph{$1$-parameter fibration} if there exists a foliation of $\R^3$ by parallel planes $\left\{P_t\right\}$ which are each individually fibered by lines from the fibration (see Example 1, items (a) and (b)).
\end{definition}

\begin{remark}
The $1$-parameter fibrations for which the direction of the fibers varies at a constant rate are characterized among line fibrations by a property called \emph{fiberwise homogeneity}: for any two fibers $\ell_1$ and $\ell_2$, there is an isometry of Euclidean space which preserves the fibration and takes $\ell_1$ to $\ell_2$ (see \cite{Nuchi}).  The corresponding property on spheres characterizes the Hopf fibrations \cite{Nuchi2}.  
\end{remark}

Our main result is a partial answer to the main question above, and it provides a structural classification for a large class of nonskew fibrations.

For $u \in U \coloneqq V(\R^3)$, we use $u^\perp$ to represent the plane orthogonal to $u$ and passing through the origin of $\R^3$.  We define the closed subset $S_u \subset u^\perp$ as the intersection $u^\perp \cap V^{-1}(u)$.  We occasionally refer to $S_u$ as a \emph{base space}, since $V^{-1}(u)$ is geometrically the direct product $S_u \times \R$.

\begin{thm}
\label{thm:main}
Consider a fibration of $\R^3$ by oriented lines given by a unit vector field $V: \R^3 \to S^2$, and consider $S_u$ defined above.

\begin{enumerate}
\item If the fibration is not $1$-parameter, then every $S_u$ is convex (in particular, connected).

\item If $S_u$ is compact, then \emph{continuity at infinity} (see Definition \ref{def:contatinf}) holds for the direction $u$, and $-u \notin U$.

\item If for all $u \in U$, $S_u$ is compact, then $U$ is a convex subset of $S^2$.

\item If for all $u \in U$, $S_u$ is not compact, then every $S_u$ is a union of one or more lines, hence the fibration is $1$-parameter.
\end{enumerate}
\end{thm}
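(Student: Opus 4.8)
My plan rests on one elementary but decisive observation, which I expect to be the analytic content behind the \emph{parallel plane pushoff}. Fix $u \in U$ and let $\rho$ denote orthogonal projection of $\R^3$ onto $u^\perp$ along $u$. Since distinct fibers are disjoint lines, any two are parallel or skew; in particular, if a fiber $m$ with direction $w \ne \pm u$ had $\rho(m)$ pass through a point $p \in S_u$, then $m$ would meet the $u$-fiber over $p$, contradicting disjointness. Hence the projection of every fiber whose direction is not $\pm u$ is a line in $u^\perp$ disjoint from $S_u$. The key corollary is this: if some $S_u$ contains an entire line $\lambda$, then each such projected line, being disjoint from $\lambda$, is parallel to $\lambda$; thus every fiber direction lies in the fixed $2$-plane spanned by $u$ and the direction of $\lambda$, every fiber is parallel to that plane, and the fibration is $1$-parameter. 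I will call this the \emph{line corollary}. Together with the non-crossing principle it reduces parts (a) and (d) to questions about convexity and recession, and it shows directly that in any $1$-parameter fibration each $S_u$ is a union of parallel lines.

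For part (a) I would argue the contrapositive. If some $S_u$ is not convex, choose a maximal gap to obtain $a,b \in S_u$ whose open segment misses $S_u$, and consider the vertical strip they bound in the plane $\Pi = \Span(b-a,u)$. The two boundary fibers lie in $\Pi$, and for each interior base point $c$ the non-crossing principle forces the projected fiber through $c$ to miss both $a$ and $b$, hence to be transverse to the segment $ab$. The interior direction $V(c)$ is therefore not $u$; if it were $-u$ the configuration is a layer of parallel vertical lines, while if it is a genuinely tilted direction the fiber must exit $\Pi$. The heart of the matter — and the step I expect to be hardest — is to show that a tilted interior fiber can avoid crossing the two boundary fibers \emph{only} if the surrounding fibers organize into parallel planes, i.e.\ only in the $1$-parameter case; this is exactly where the hypothesis ``not $1$-parameter'' must be spent, via a continuity (intermediate-value) argument along the strip that produces an actual intersection of two fibers otherwise.

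Parts (b) and (c) I would treat through the pushoff viewed as a planar isotopy. Pushing $u^\perp$ in the $u$-direction sends each fiber to its single intersection point with the plane at height $h$; by non-crossing these points move without collision, so the pushoff is a genuine isotopy of $u^\perp$ whose stationary set is $S_u \cup S_{-u}$ (the $\pm u$ fibers, which are vertical). When $S_u$ is compact the motion is uniform outside a large disk, so the isotopy extends to the compactification, which is precisely continuity at infinity at $u$ (Definition \ref{def:contatinf}); and then $-u \notin U$ follows at once, since a $-u$ fiber would run to $u$-infinity with constant direction $-u$, contradicting the limiting value $u$. For part (c) I would combine part (b) — continuity at infinity at every $u$ and the absence of antipodes — with the strategy already used in the skew case: show $U$ is open by a transversality or degree argument (compactness of each base replacing the single-point base of a skew fibration), and establish spherical convexity of $U$ by an intermediate-value argument along a path of fibers joining prescribed directions, using compactness and continuity at infinity to prevent the intermediate directions from escaping off the minor geodesic.

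Finally, part (d): assuming every $S_u$ is noncompact, if the fibration is $1$-parameter we are done by the line corollary's converse observation, and otherwise part (a) makes each $S_u$ a closed, convex, noncompact set, hence one with a nonzero recession cone. If any $S_u$ contains a full line the line corollary yields $1$-parameter, so the remaining case is that every $S_u$ is line-free with a pointed recession cone. I expect this to be the principal obstacle of the whole theorem: one must promote ``contains a recession ray'' to ``contains a line.'' My plan is to examine a recession direction $r$, note that the $u$-fibers over the ray fill a half-plane in $\Span(r,u)$ while the non-crossing principle forces all other projected fibers to avoid that ray, and then push to infinity along $r$ and invoke continuity to show $S_u$ must be invariant under $-r$ as well — contradicting line-freeness. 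Carrying out this last limiting step rigorously, presumably by combining the pushoff with the continuity-at-infinity machinery of parts (b)–(c), is where I anticipate the real work to lie.
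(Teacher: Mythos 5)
Your elementary ingredients are sound --- the non-crossing projection principle is exactly the paper's Remark~\ref{rem:skewprop}, your component-wise convexity argument is essentially Lemma~\ref{lem:convex1}, and your ``line corollary'' (a full line in $S_u$ forces every fiber parallel to a fixed $2$-plane, hence a $1$-parameter fibration) is correct and tidy. But at the two places where the theorem is actually hard, you explicitly defer, and those deferred steps are not routine. For (a), the difficult half is connectedness of $S_u$, and your proposed ``intermediate-value argument along the strip'' must exclude a specific failure mode that your sketch never addresses: the fibers contained in the intermediate parallel planes can a priori \emph{escape to infinity} as the plane parameter varies, so that no intersection of fibers is ever produced. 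The paper's entire technical apparatus exists to close exactly this gap: continuity of the parallel plane pushoff (Lemma~\ref{lem:ppp}, proved by a divergence argument using the component-respecting homeomorphisms between $P_t - A_t$ and $P_s - A_s$), monotonicity (Lemma~\ref{lem:nocanyon}), isolation of components (Lemma~\ref{lem:isolated}, with its case analysis over strict and non-strict support lines), and finally the contradiction that a pushoff joining two components of $S_u$ would sweep out an entire great circle of directions while every intermediate direction $v$ has compact base, so that $-v \notin U$ by Lemma~\ref{lem:continffinite}. Your part (b) has a related flaw: the fiber flow from $u^\perp$ to a parallel translate is \emph{not} ``a genuine isotopy of $u^\perp$'' --- points whose fibers lie inside an intermediate translate (the sets $A_t$) are not carried anywhere --- and the assertion that ``the motion is uniform outside a large disk'' is not a step toward continuity at infinity but a restatement of it.

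The plan for (d) aims at a statement that is false in the form proposed. You want to promote a recession ray $r$ of $S_u$ to a full line by ``pushing to infinity along $r$'' and concluding $S_u$ is invariant under $-r$; but as described this uses only data attached to the single base $S_u$, so if it worked it would show that ray-shaped bases are impossible in \emph{any} fibration --- and they are not: in Example~\ref{ex:halfhalf} the base $S_u$ for the vertical direction is precisely a closed ray. The all-noncompact hypothesis must enter essentially, and the paper's mechanism is different and much shorter: if some $S_u$ is not a line, Lemma~\ref{lem:support} provides a \emph{strict} supporting line (every direction except its own strictly supports a ray), hence a plane disconnected by fibers, hence a continuous and --- by isolation --- nonconstant pushoff, which produces a nearby direction $v$ whose base $S_v$ is compact, contradicting the hypothesis that every base is noncompact. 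In short: your reductions and your line corollary reproduce the easy parts of the paper's argument, but (a), (b), and (d) each rest on a step that is either unproved (the escape-to-infinity control that is the content of Lemmas~\ref{lem:ppp}--\ref{lem:isolated}) or, in the case of (d), unprovable as stated.
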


The theorem might be interpreted as follows.  Item (c) describes the situation in which the fibration is skewlike; i.e.\ it looks and behaves as a skew fibration, except for possibly countably many nonskew patches.  Item (d) describes the $1$-parameter fibrations.  The theorem makes no claim about the behavior of fibrations for which there coexist some compact base spaces with some noncompact base spaces.  These seem to be the most mysterious among the line fibrations, and we can offer only partial results.

For example, if there exists precisely one $u \in U$ for which $S_u$ is noncompact, then the techniques developed here imply that the fibration is skewlike outside of $S_u$, and the shape of the convex set $S_u$ dictates certain qualitative behaviors of the fibration and the subset $U$.  On the other hand, we may construct an example with an open great semicircle worth of directions with noncompact base, by gluing half of a $1$-parameter fibration to half of a skew fibration; see Example \ref{ex:halfhalf}.  Finally, there exists a class of \emph{exotic} fibrations, for which $U$ contains a pair of antipodal points $\pm u$, and the sets $S_u$ and $S_{-u}$ determine uniquely the closure of $U$ (see Section \ref{sec:exotic}).  These exotic fibrations are the only known non-$1$-parameter fibrations which contain an antipodal pair $\pm u \in U$.

In light of these examples, we offer two conjectures.

\begin{conjecture}
\label{conj:one}
If there exist multiple $u \in U$ for which $S_u$ is noncompact, then the set of such $u$ is contained in some great circle in $S^2$.
\end{conjecture}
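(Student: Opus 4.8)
The plan is to dispose of the degenerate cases first and then reduce the conjecture to a rigidity statement about the directions in which the noncompact base spaces recede. If the fibration is $1$-parameter (Definition \ref{def:oneparam}), then all fibers lie in planes parallel to a fixed linear plane $P_0$, so $U \subseteq P_0 \cap S^2$, a great circle, and there is nothing to prove. Hence I would assume the fibration is not $1$-parameter; by Theorem \ref{thm:main}(a) every $S_u$ is then convex, in particular connected. Writing a great circle as $w^\perp \cap S^2$, the conjecture asserts the existence of a single $w \in S^2$ with $w\cdot u = 0$ for every $u$ with $S_u$ noncompact. Antipodal pairs $\pm u$, which by Theorem \ref{thm:main}(b) are automatically noncompact, already lie on a common great circle, so the content lies in the pairwise non-antipodal directions.

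The engine is the disjointness of fibers. Fix two noncompact directions $u,u'\in U$ with $u\ne\pm u'$. Distinct fibers are disjoint, and two disjoint lines with different directions are skew; since $p+\R u$ and $p'+\R u'$ meet exactly when $(p-p')\cdot(u\times u')=0$, the feet satisfy $(p-p')\cdot(u\times u')\ne 0$ for all $p\in S_u$, $p'\in S_{u'}$. The map $(p,p')\mapsto(p-p')\cdot(u\times u')$ is continuous and nowhere zero on the connected set $S_u\times S_{u'}$, hence of \emph{constant sign}. As it is affine in each argument, constant sign forces the functional $p\mapsto p\cdot(u\times u')$ to be bounded on one side over each (noncompact, convex) base space.

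Because $S_u$ is closed, convex and noncompact it contains a ray; let $r_u\in u^\perp\cap S^2$ be a recession direction and set $n_u:=u\times r_u$, the normal to the \emph{asymptotic fiber plane} $\Span(u,r_u)$. Since $n_u\perp u$, the desired conclusion is exactly that all the $n_u$ are parallel to a common $w$, which would give $w\cdot u=0$ for every noncompact $u$. Feeding the recession rays into the constant-sign lemma and using the identities $r_u\cdot(u\times u')=-\,n_u\cdot u'$ and $r_{u'}\cdot(u\times u')=n_{u'}\cdot u$, the requirement that $(p-p')\cdot(u\times u')$ not change sign as the feet run out along the two rays becomes the pairwise condition
\[
(n_u\cdot u')\,(n_{u'}\cdot u)\ \ge\ 0 .
\]
Geometrically $u\mapsto r_u$ is a line field on the set of noncompact directions, and ``all $n_u\parallel w$'' says precisely that its integral curves are arcs of the single great circle $w^\perp\cap S^2$.

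The difficulty is that this pairwise inequality is necessary but visibly not sufficient: three directions can satisfy every such sign condition while being linearly independent. To close the gap I would try to promote the pairwise skewness into a \emph{global} packing constraint: if the asymptotic planes $\Span(u_1,r_{u_1})$ and $\Span(u_2,r_{u_2})$ were transverse, I would aim to produce, far out along the two recession rays, either a pair of fibers forced to cross or a point of $\R^3$ covered twice or not at all, contradicting that $V$ is a genuine fibration. The honest crux is controlling the recession data: a priori $r_u$ need not be unique (the recession cone can be a half-plane) and $u\mapsto r_u$ need not vary continuously, so the line-field picture is only heuristic. I expect the essential input to be \emph{continuity at infinity} (Definition \ref{def:contatinf}): Theorem \ref{thm:main}(b) already pins down the behaviour at infinity in the compact directions, and the hope is that this, together with the covering property of $V$, rigidifies the recession cones of the noncompact directions enough to force a common asymptotic normal. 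Establishing that rigidity is, I expect, the whole difficulty, and is consistent with the fact that the only known examples --- the half-half fibration of Example \ref{ex:halfhalf} and the exotic fibrations of Section \ref{sec:exotic} --- confine their noncompact directions to a semicircle or an antipodal pair.
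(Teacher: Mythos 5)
You are trying to prove a statement that the paper itself does not prove: this is Conjecture~\ref{conj:one}, stated as an open problem. The paper offers only partial results toward it (Section~\ref{sec:additional}), and the Conclusion explicitly says the author \emph{believes} further applications of the parallel plane pushoff ``could yield'' the conjecture. So there is no proof in the paper to compare against; the only question is whether your proposal closes the conjecture on its own, and it does not. Your reductions are fine (the $1$-parameter case gives $U \subseteq P_0 \cap S^2$ directly; non-$1$-parameter gives connected convex $S_u$ via Theorem~\ref{thm:main}(a), so the constant-sign argument on $S_u \times S_{u'}$ is sound), and your pairwise inequality $(n_u\cdot u')(n_{u'}\cdot u)\ge 0$ is correctly derived. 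But note that this inequality carries essentially the same information as the paper's own partial lemma that the projection of $u_{1-i}$ onto $u_i^\perp$ must (strictly or nonstrictly) support $S_{u_i}$ --- both come from the same one-line observation that disjoint nonparallel fibers are skew. So up to this point you have reproved the paper's partial results, not advanced beyond them.

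The genuine gap is exactly where you flag it, and it is fatal to the proposal as a proof: a family of pairwise sign constraints of the form $(n_u\cdot u')(n_{u'}\cdot u)\ge 0$ does not force the normals $n_u$ to be parallel to a common $w$ --- one can write down three unit vectors $u_1,u_2,u_3$ with recession data satisfying every pairwise inequality while $n_{u_1},n_{u_2},n_{u_3}$ are linearly independent, so no algebraic manipulation of these inequalities alone can yield the great circle. The step meant to bridge this --- the ``global packing constraint'' producing either crossing fibers or a point of $\R^3$ covered twice or not at all when two asymptotic planes $\Span(u,r_u)$ and $\Span(u',r_{u'})$ are transverse --- is announced but never constructed, and it is precisely the hard part: surjectivity of the fibration onto far-away regions is controlled by Theorem~\ref{thm:main}(b) only in directions with \emph{compact} base, which is unavailable here by hypothesis. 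Moreover the recession issues you mention are not cosmetic: when the recession cone of $S_u$ is two-dimensional, the ``asymptotic normal'' $n_u$ is not even well-defined, so the statement to be proved (``all $n_u$ parallel'') would first need reformulation. As it stands, the proposal is an honest and correctly reasoned reduction plus a restatement of the difficulty, not a proof; the conjecture remains open along your route just as it does in the paper.
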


\begin{conjecture}
\label{conj:two}
If a fibration is not $1$-parameter, there is at most one pair of antipodal points in $U$.  If such a pair $\pm u$ exists, then $S_v$ is compact for every $v \in U - \left\{ \pm u \right\}$, and $U - \left\{ \pm u \right\}$ is convex.
\end{conjecture}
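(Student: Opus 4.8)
The plan is to derive Conjecture \ref{conj:two} from Theorem \ref{thm:main} together with a rigidity analysis of \emph{antiparallel columns} built on the parallel plane pushoff. Throughout I assume the fibration is not $1$-parameter, so by Theorem \ref{thm:main}(a) every base space $S_w$ is convex. The key reduction is the contrapositive of Theorem \ref{thm:main}(b): if $\pm u \in U$ is an antipodal pair, then neither $S_u$ nor $S_{-u}$ can be compact, so each antipodal pair in $U$ produces (at least) two noncompact, unbounded convex base spaces. I must then prove three things: (i) $U$ contains at most one antipodal pair; (ii) if $\pm u$ is such a pair then $S_v$ is compact for every $v \in U \setminus \{\pm u\}$; and (iii) $U \setminus \{\pm u\}$ is convex.

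First I would set up the local picture attached to an antipodal pair $\pm u$. Projecting $\R^3$ orthogonally along $u$ onto $u^\perp$, the fibers of direction $u$ and of direction $-u$ all project to points, so $S_u$ and $S_{-u}$ appear as two disjoint unbounded convex subsets of $u^\perp$ (disjoint because a $u$-fiber and a $-u$-fiber cannot be the same unoriented line). The parallel plane pushoff is then applied both to the family of planes parallel to $u^\perp$ and to planes containing the direction $u$, in order to control how a fiber of direction $v \neq \pm u$ may thread between the $u$-column $V^{-1}(u)$ and the $-u$-column $V^{-1}(-u)$ without meeting either. I expect the pushoff to yield a monotonicity statement forcing these intermediate fibers to turn from $u$ toward $-u$ in a prescribed, essentially planar fashion along any interface between the two columns; this is precisely the rigid structure underlying the exotic fibrations of Section \ref{sec:exotic}, in which $S_u$ and $S_{-u}$ determine $\overline{U}$.

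Armed with this, part (i) would follow by assuming a second, independent antipodal pair $\pm u'$ and showing that the antiparallel columns in directions $u$ and $u'$ impose incompatible planar structures unless the whole fibration foliates by a single family of parallel planes, contradicting the non-$1$-parameter hypothesis (Definition \ref{def:oneparam}). For part (ii), I would suppose toward a contradiction that some $v \in U \setminus \{\pm u\}$ has noncompact base; by Theorem \ref{thm:main}(a) the set $S_v$ is then unbounded convex, and feeding this third noncompact column into the pushoff analysis should force either a second antipodal pair (excluded by (i)) or the global parallel-planes structure. Here the half-and-half Example \ref{ex:halfhalf}, which has a whole semicircle of noncompact bases but \emph{no} antipodal pair, marks the boundary of what is possible and must be respected by the argument. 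Finally, with every $S_v$ compact for $v \neq \pm u$, I would rerun the machinery behind Theorem \ref{thm:main}(c): continuity at infinity (Definition \ref{def:contatinf}) holds at each such $v$ by Theorem \ref{thm:main}(b), and a Salvai-type argument (\cite{Salvai}) applied to the punctured direction set yields convexity of $U \setminus \{\pm u\}$.

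The main obstacle is the rigidity lemma for antiparallel columns underlying both (i) and (ii): the pushoff delivers strong \emph{local} and monotone control in one fixed direction, but converting this into a \emph{global} statement — that two antipodal pairs, or an antipodal pair coexisting with a third noncompact base, must collapse to a single family of parallel planes — requires controlling the convex geometry of the unbounded sets $S_u, S_{-u}$ out at infinity and ruling out subtle non-$1$-parameter configurations that the pushoff alone does not detect. I also anticipate genuine difficulty in extending the continuity-at-infinity and convexity arguments of Theorem \ref{thm:main}(c) to the punctured set $U \setminus \{\pm u\}$, since removing the two antipodal directions deletes exactly the points at which the fibration fails to be skewlike.
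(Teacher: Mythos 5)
You should know at the outset that the statement you were asked to prove is Conjecture \ref{conj:two}, which the paper explicitly leaves open: Section \ref{sec:additional} offers only partial results (Lemmas \ref{lem:antipodal} through \ref{lem:nonstrictantipodal}) and states that the difficulties there ``have prevented us from classifying fibrations with multiple unbounded bases.'' So there is no proof in the paper to compare against, and your proposal is not a proof either --- it is a program whose load-bearing step, the ``rigidity lemma for antiparallel columns,'' is only hypothesized (``I expect the pushoff to yield a monotonicity statement\dots''), and it sits exactly at the known obstruction. The entire pushoff machinery (Lemma \ref{lem:ppp}, Corollary \ref{cor:localppp}, Lemma \ref{lem:nocanyon}) requires the plane $P$ to be disconnected by fibers, which in practice requires a \emph{strict} supporting line of the relevant base space. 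But for an antipodal pair $\pm u$, both $S_u$ and $S_{-u}$ are necessarily noncompact (Lemma \ref{lem:continffinite}), and when a third noncompact base $S_v$ is present, Lemma \ref{lem:nonstrictantipodal} shows that the direction obtained by projecting $v$ into $u^\perp$ can only \emph{nonstrictly} support $S_{\pm u}$ --- i.e., the planes along which your plan needs to push off are precisely those for which $\gamma_{P,E}$ is not known to be defined, let alone continuous or monotone; the paper's Conclusion poses exactly this as an open question. Thus steps (i) and (ii) of your plan reduce the conjecture to an unproved lemma that is at least as hard as the conjecture itself, and your own final paragraph concedes as much.

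Step (iii) has a further concrete gap: you cannot ``rerun the machinery behind Theorem \ref{thm:main}(c),'' because its hypothesis --- $S_w$ compact for \emph{all} $w \in U$ --- fails at $\pm u$, and the proof of Lemma \ref{lem:convex} depends on the image of the pushoff being a connected arc \emph{with no antipodal points}, which is false for planes containing $u$: by Lemma \ref{lem:nonstrictantipodal}, and as realized in the exotic example of Section \ref{sec:exotic}, such a pushoff image can be a closed great semicircle through $\pm u$. To conclude that $U - \{\pm u\}$ is convex you would additionally need to show that $\pm u$ never lies in the interior of the shorter arc joining two directions of $U - \{\pm u\}$ --- for instance, that $U - \{\pm u\}$ is contained in an open hemisphere whose boundary circle passes through $\pm u$, as happens in the exotic example --- and your sketch does not address this. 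To your credit, your structural setup (an antipodal pair forces two noncompact convex bases with matching supporting-line data; the exotic fibrations as the model case; Example \ref{ex:halfhalf} as the boundary of what is possible) accurately reproduces the paper's partial results, Lemma \ref{lem:antipodal} in particular; but nothing beyond those already-published partial results is actually established in the proposal.
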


If true, the conjectures allow us to completely understand nonskew fibrations in four categories: skewlike, $1$-parameter, half-and-half, and exotic.  In Section \ref{sec:additional} we discuss partial results related to the conjectures.

One consequence of Theorem \ref{thm:main} is a simple classification of analytic fibrations.

\begin{cor}
\label{cor:analytic}
If a fibration of $\R^3$ is given by an analytic vector field $V$, then it is either a $1$-parameter fibration or a skew fibration.
\end{cor}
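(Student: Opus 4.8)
The plan is to prove the equivalent statement that an analytic fibration which is \emph{not} $1$-parameter must be skew; by the definition of skew it suffices to show that every base space $S_u$ is a single point. Since the fibration is assumed not $1$-parameter, Theorem \ref{thm:main}(a) already tells us that each $S_u$ is convex, and this convexity is the property I will exploit together with analyticity.

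Assume toward a contradiction that some $S_u$ contains two distinct points. By convexity $S_u$ then contains a nondegenerate straight segment $\sigma \subset u^\perp$, whose direction $w$ is orthogonal to $u$; in particular $w$ and $u$ are independent. Because $V^{-1}(u)$ is geometrically $S_u \times \R$, the field $V$ is identically equal to $u$ on the strip $\{s + t u : s \in \sigma,\ t \in \R\}$, which has nonempty relative interior inside the affine plane $\Pi$ that contains $\sigma$ and has direction space $\Span\{w, u\}$. This is exactly where analyticity is used: the restriction $V|_\Pi$ is a real-analytic map of two variables agreeing with the constant $u$ on an open subset of the connected plane $\Pi$, so the identity theorem for real-analytic functions forces $V \equiv u$ on all of $\Pi$. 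Hence $\Pi$ is entirely composed of fibers of direction $u$.

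I would then propagate this rigidity by a disjointness argument. If $\ell$ is any fiber not parallel to $\Pi$, it meets $\Pi$ at some point $q$; but $V(q) = u$ means the fiber through $q$ is the direction-$u$ line lying in $\Pi$, and since distinct fibers are disjoint, $\ell$ must coincide with that line, contradicting $\ell \not\parallel \Pi$. Therefore every fiber is parallel to $\Pi$. Letting $n$ be the unit normal of $\Pi$, the family of parallel planes $\{x : x \cdot n = t\}$ foliates $\R^3$, and each such plane is a union of fibers, since a fiber through any of its points has direction orthogonal to $n$ and so cannot leave the plane. By Definition \ref{def:oneparam} the fibration is $1$-parameter, contradicting the hypothesis.

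Consequently every $S_u$ is a single point, so each direction $u \in U$ is carried by a unique fiber and no two fibers share a direction; moreover each $S_u$ is trivially compact, so Theorem \ref{thm:main}(b) yields $-u \notin U$ and rules out antiparallel fibers as well. The fibration is therefore skew, which completes the dichotomy. I expect the crux of the argument to be the combination in the middle step — upgrading a two-dimensional fibered strip to a fully fibered plane via the real-analytic identity theorem, and then using the disjointness of fibers to force every fiber parallel to that plane. The surrounding bookkeeping (orthogonality of $w$ and $u$, genuine two-dimensionality of the strip, and the fact that each translate of $\Pi$ is fibered) should be routine.
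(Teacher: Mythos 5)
Your proof is correct and takes essentially the same route as the paper's: convexity of $S_u$ from Theorem \ref{thm:main}(a) produces a segment, the identity theorem for analytic functions upgrades it, and one concludes the fibration is $1$-parameter. You merely fill in details the paper leaves implicit --- the disjointness argument showing that a fully fibered plane forces the fibration to be $1$-parameter, and the appeal to Theorem \ref{thm:main}(b) to exclude antipodal fiber directions.
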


\begin{proof} If the fibration is not skew, then for some $u \in U$, $S_u$ is not a single point.  By convexity $S_u$ contains a line segment.  Then by analyticity, $S_u$ contains a line, and the fibration must be $1$-parameter.
\end{proof}

A second goal of this paper is to completely understand the relationship between line fibrations and contact structures.  A contact structure on $\R^3$ is a maximally non-integrable plane field $\xi$.  Any contact structure may be defined as the kernel of a $1$-form $\alpha$ with $\alpha \wedge d\alpha$ never zero.  A contact structure is called \emph{overtwisted} if there exists an embedded disk $D$ in $\R^3$ such that $\partial D$ is tangent to $\xi$ while $D$ is transverse to $\xi$ along $\partial D$.  Otherwise $\xi$ is called \emph{tight}.  A contact structure $\xi$ on $\R^3$ is \emph{tight at infinity} if it is tight outside a compact set.  The contact structures on $\R^3$ were classified by Eliashberg in \cite{Eliashberg}: up to isotopy, there is one tight contact structure, one overtwisted contact structure which is not tight at infinity, and a countable number of pairwise non-isotopic overtwisted contact structures which are tight at infinity.

To each line fibration $V$ there corresponds a plane distribution $\xi_V$, defined by $\xi_V(p) = (V(p))^\perp$.

\begin{definition} A smooth line fibration given by a unit vector field $V : \R^3 \to S^2$ is called \emph{nondegenerate} if $\nabla V$ vanishes only in the direction of $V$; equivalently, if $\nabla V |_{V^\perp}$ has rank $2$ everywhere.  A fibration is called \emph{semidegenerate} if $\nabla V |_{V^\perp}$ has rank at least $1$ everywhere.
\end{definition}

Nondegeneracy is a form of local skewness, though in fact any nondegenerate fibration is globally skew.  This fact was originally proven by Salvai, who classified the nondegenerate fibrations in \cite{Salvai}; though a succinct and more direct proof was recently given by Becker and Geiges \cite{BeckerGeiges}.

The author showed in \cite{Harrison2} that the plane distribution induced by a nondegenerate fibration is a tight contact structure.  At first glance, the contact assertion appears to be a purely local statement, in the sense that it claims that the first-order nondegeneracy condition implies the first-order contact condition.  However, consider the vector field $V(p) = \frac{p}{|p|}$ defined on $\R^3 - \left\{ 0 \right\}$.  Then $V$ induces a foliation of $\R^3 - \left\{ 0 \right\}$ by oriented rays, and $V$ is nondegenerate at every point of its domain, but the induced plane field is not a contact distribution -- round spheres are tangent.  Thus the globality of a line fibration gives some additional structure which contributes to the local contact condition.

The tightness argument uses a result of Etnyre, Komendarczyk, and Massot \cite{EtnyreEtal} on tightness in contact metric $3$-manifolds.  In fact, it is shown that if a contact structure is induced from a line fibration for which at least one fiber admits no parallel fibers, then the contact structure is tight.  More recently, Becker and Geiges showed that if a line fibration has the property that every fiber admits a parallel fiber, then $\nabla V$ has rank $1$ at every point $p$, and the line fibration corresponds to a tight contact structure.  Thus from \cite{Harrison2} and \cite{BeckerGeiges}, it is known that:
\begin{itemize}
\item Any contact structure induced by a line fibration is tight.
\item If a line fibration is nondegenerate, i.e.\ $\rank(\nabla V(p)) = 2$ for all $p$, the induced plane distribution is contact.
\item If a line fibration satisfies $\rank(\nabla V(p)) = 1$ for all $p$, the induced plane distribution is contact.
\end{itemize}

We complete the relationship between line fibrations and contact structures with the theorem below.  We also offer an alternate proof of the tightness result of Becker and Geiges based on the structural classification of Theorem \ref{thm:main}.

\begin{thm}
\label{thm:contact}
The plane distribution associated to a smooth line fibration of $\R^3$ is a contact structure if and only if the fibration is semidegenerate.  Moreover, any such contact structure is tight.
\end{thm}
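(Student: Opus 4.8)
The plan is to reduce the contact condition to a pointwise linear-algebraic statement about $\nabla V$, and then to exploit completeness of the fibers to pin down the possible shape of $\nabla V|_{V^\perp}$.

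First I would take $\alpha = V \cdot dx$ as a defining $1$-form for $\xi_V$, so that $\ker \alpha = V^\perp = \xi_V$, and use the standard identity $\alpha \wedge d\alpha = (V \cdot (\nabla \times V))\, dx_1\wedge dx_2\wedge dx_3$. Thus $\xi_V$ is contact at a point $p$ exactly when the helicity density $V \cdot (\nabla \times V)$ is nonzero there. To read this off from $\nabla V$, fix $p$ and choose coordinates with $V(p) = e_3$. Since $|V| \equiv 1$ we have $V \cdot \partial_j V = 0$, so every column of $\nabla V$ lies in $V^\perp$; and since the fibers are lines we have $\nabla_V V = 0$, so $V \in \ker \nabla V$. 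Consequently $\nabla V(p)$ has a single nontrivial $2\times 2$ block $M := \nabla V|_{V^\perp}(p)$ acting on $\Span(e_1,e_2)$, and a direct computation gives $V \cdot (\nabla \times V)(p) = \partial_1 V_2 - \partial_2 V_1 = M_{21} - M_{12}$. In other words, $\xi_V$ is contact at $p$ if and only if $M$ fails to be symmetric, while $\rank M = \rank \nabla V|_{V^\perp}(p)$ records semidegeneracy. The theorem therefore reduces to the claim that, for a line fibration, $M$ is nonsymmetric precisely when $M \neq 0$.

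The crux is a \emph{rigidity lemma}: for a smooth line fibration, $M = \nabla V|_{V^\perp}$ has no nonzero real eigenvalue at any point. I would prove this by differentiating the defining identity $V(q + tV(q)) = V(q)$ with respect to $q$. Writing $A := \nabla V(p)$ and $A_t := \nabla V(p + tu)$ with $u = V(p)$, the chain rule yields $A_t(I + tA) = A$; restricting to $u^\perp$ (which $I+tA$ preserves) gives $M_t(I + tM) = M$, where $M_t = \nabla V|_{u^\perp}(p+tu)$ is finite because $V$ is smooth on all of $\R^3$. If $I + t_0 M$ were singular for some $t_0 \neq 0$, then choosing $0 \neq v \in \ker(I + t_0 M) \subset u^\perp$ would give $0 = M_{t_0}(I + t_0 M)v = Mv = -v/t_0 \neq 0$, a contradiction. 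Hence $I + tM$ is invertible for every real $t$, i.e.\ $M$ has no nonzero real eigenvalue (geometrically, a complete fiber of an honest fibration of all of $\R^3$ carries no focal points). Now a nonzero real $2\times 2$ matrix with no nonzero real eigenvalue is either nilpotent or has a complex-conjugate pair of eigenvalues, and in either case an elementary check shows its off-diagonal entries cannot be equal, so $M$ is nonsymmetric; conversely a symmetric $M$ has real eigenvalues and so must vanish under the lemma. This gives $M \neq 0 \iff M \text{ nonsymmetric}$, completing both directions: $\xi_V$ is contact at every point iff $M \neq 0$ at every point iff the fibration is semidegenerate.

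For the tightness assertion I would invoke the established fact that every contact structure arising from a line fibration is tight, via the dichotomy of \cite{Harrison2} and \cite{BeckerGeiges} (covering the cases of a fiber with no parallel, respectively every fiber admitting a parallel). I would also indicate the promised alternate argument, running the classification of Theorem \ref{thm:main}: in the skewlike case the structure is modeled on a skew fibration and tightness is inherited from the great-circle/Hopf picture, while the $1$-parameter case is handled directly. The main obstacle is the rigidity lemma itself — specifically, making precise that smoothness of $V$ on all of $\R^3$, equivalently the absence of focal points along a complete fiber, forbids nonzero real eigenvalues of $M$. Once that is in hand, the contact equivalence is the short linear-algebra computation above and tightness is quotable.
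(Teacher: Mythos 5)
Your proposal is correct and follows essentially the same route as the paper: your rigidity lemma is exactly the paper's Theorem \ref{thm:nononzero} (the paper obtains invertibility of $\operatorname{Id} + t\,dB_p$ from the diffeomorphism $q \mapsto q + tB(q)$ between parallel planes, which is your chain-rule identity $A_t(I+tA)=A$ in different notation, using $d{\bf B}_p h = dV_p{\bf h}$ to pass between $B$ and $\nabla V$), and your scalar $M_{21}-M_{12}$ is precisely the paper's $\operatorname{trace}(Q) = \ast(\alpha \wedge d\alpha)$. The only divergences are cosmetic: your uniform observation that a nonzero real $2\times 2$ matrix with no nonzero real eigenvalues cannot be symmetric replaces the paper's case check over the three canonical rank-one forms (and even subsumes the rank-two case, which the paper cites from \cite{Harrison2}), and for tightness you quote the combined results of \cite{Harrison2} and \cite{BeckerGeiges}, which is legitimate since the paper's introduction records exactly that blanket statement, whereas the paper additionally reproves the Becker--Geiges case structurally (via Sard's theorem and the pushoff, ruling out that every $S_u$ is a segment or ray) --- a step your sketched ``alternate argument'' gestures at but does not reproduce.
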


The proof uses a certain local description of a line fibration of $\R^3$.  Let $p \in \R^3$ be contained in a fiber $\ell$ with direction $u$.  There exists a neighborhood $E$ of $p$ in (any copy of) $u^\perp$, and a map $B : E \to \R^2$ which generates the fibration near $\ell$, in the sense that the fiber through $q \in E$ is the graph of the affine map $t \mapsto q + tB(q)$.  This description is a priori local, since fibers away from $\ell$ may lie in $u^\perp$.

\begin{figure}[h!t]
\centerline{
\includegraphics[width=3in]{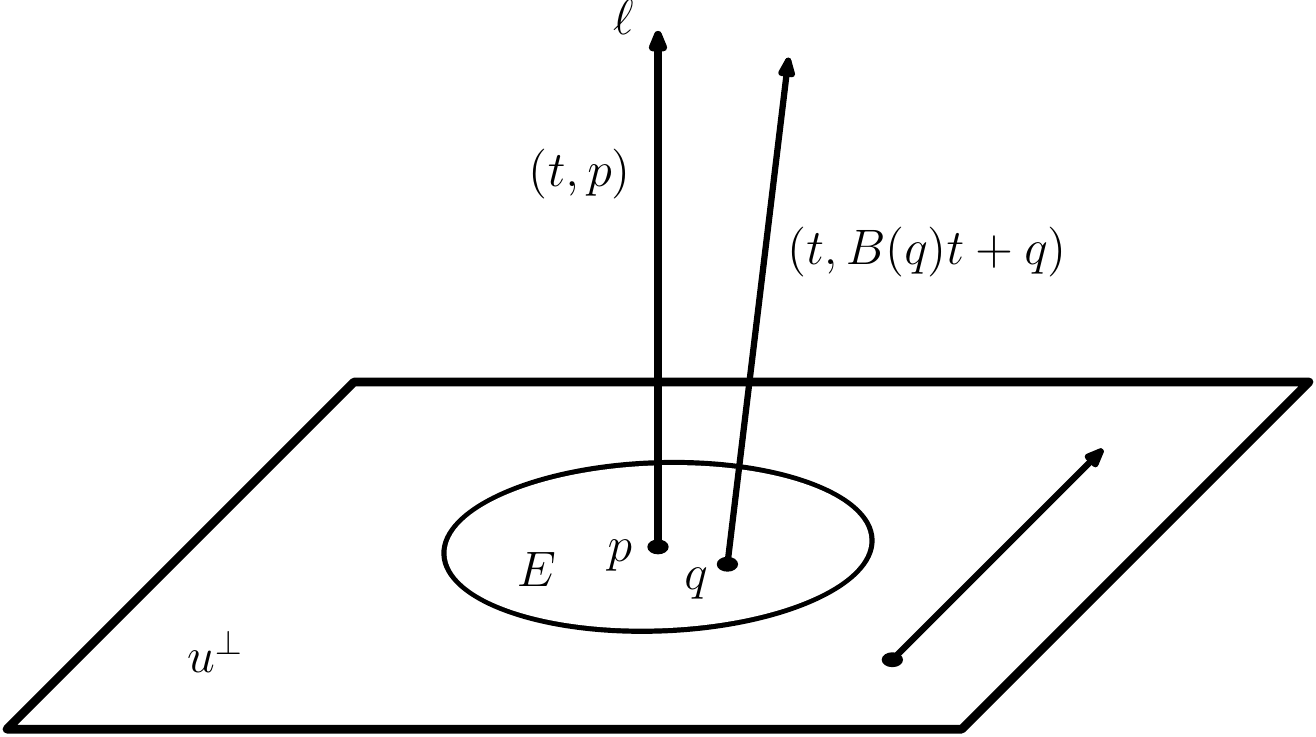}
}
\caption{Local description of a line fibration}
\label{fig:local}
\end{figure}

\begin{thm}
\label{thm:nononzero}
Consider a smooth fibration of $\R^3$ by oriented lines, let $p \in \R^3$ be a point with fiber direction $u$, and consider the map $B$ defined in a neighborhood $E$ of $p$ in $u^\perp$.  Then $dB_p$ has no nonzero real eigenvalues.  In particular:
\begin{enumerate}
\item (Harrison \cite{Harrison2}) If the fibration is nondegenerate at $p$, then the eigenvalues of $dB_p$ are complex.
\item Otherwise, both eigenvalues of $dB_p$ are zero.
\end{enumerate}
\end{thm}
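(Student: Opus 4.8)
The plan is to recognize the claim as the statement that the natural parametrization of the fibration near the central fiber $\ell$ is a local diffeomorphism, and then to read the eigenvalue condition off the Jacobian along $\ell$. Working in coordinates $\R^3 = u^\perp \oplus \R u$, I encode the fibration near $\ell$ by
\[
\Phi : E \times \R \to \R^3, \qquad \Phi(q,t) = q + t\bigl(u + B(q)\bigr),
\]
so that $\Phi(q,t)$ has $u$-component $t$ and $u^\perp$-component $q + tB(q)$ (this is exactly the graph of $t \mapsto q + tB(q)$), and $\Phi(p,t) = p + tu$ traces $\ell$. A block computation gives
\[
d\Phi_{(q,t)} = \begin{pmatrix} I + t\,dB_q & B(q) \\ 0 & 1 \end{pmatrix}, \qquad \det d\Phi_{(q,t)} = \det\bigl(I + t\,dB_q\bigr).
\]
Along $\ell$ this is $\det(I + t\,dB_p) = (1 + t\mu_1)(1 + t\mu_2)$, where $\mu_1,\mu_2$ are the eigenvalues of $dB_p$. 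A nonzero real eigenvalue $\lambda$ would force $\det d\Phi_{(p,\,-1/\lambda)} = 0$, so it suffices to prove that $d\Phi$ is nonsingular at every point of $\ell$.

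The key step is to produce a smooth inverse for $\Phi$ near $\ell$ directly from the vector field. Since $V \equiv u$ along $\ell$, continuity gives $V(x)\cdot u > 0$ on a neighborhood of $\ell$; there every fiber is transverse to the planes $\{x\cdot u = t\}$, so following the fiber through $x$ back to $u^\perp$ is well defined. Explicitly,
\[
\Psi(x) = \Bigl(\, x - (x\cdot u)\tfrac{V(x)}{V(x)\cdot u},\ x\cdot u \,\Bigr)
\]
is smooth wherever $V(x)\cdot u \neq 0$, its first component lies in $u^\perp$, and one checks $\Phi\circ\Psi = \mathrm{id}$ and $\Psi\circ\Phi = \mathrm{id}$ near $\ell$ (using $V(x) \parallel u + B(q)$ on a common fiber to recover the foot $q$). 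Hence $\Phi$ is a diffeomorphism onto an open neighborhood of $\ell$, so $\det d\Phi_{(p,t)} \neq 0$ for every real $t$; equivalently $(1 + t\mu_1)(1 + t\mu_2) \neq 0$ for all $t$, which rules out any nonzero real eigenvalue of $dB_p$.

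It remains to split into cases. Differentiating $V(q) = (u + B(q))/|u+B(q)|$ at $q=p$, and using $B(p)=0$ together with $dB_p(w)\in u^\perp$, shows $dV_p(w) = dB_p(w)$ for $w\in u^\perp$; since $dV_p(u)=0$ along $\ell$, the restriction $\nabla V|_{V^\perp}$ at $p$ is exactly $dB_p$. Thus nondegeneracy at $p$ means $dB_p$ is invertible, i.e.\ both eigenvalues are nonzero, and combined with the absence of real nonzero eigenvalues they form a complex conjugate pair, giving (a). Otherwise $dB_p$ is singular, so $0$ is an eigenvalue; the product of eigenvalues is then $0$ while the trace is real, forcing the second eigenvalue to be real as well, hence also $0$ by the main claim, giving (b). I expect the only genuine obstacle to be the key step: justifying carefully that the local description by $B$ and the transversality $V\cdot u > 0$ persist along the entire (noncompact) fiber $\ell$, so that $\Psi$ is a bona fide smooth inverse there. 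Once this flow-box structure is secured, both the Jacobian identity and the case analysis are routine.
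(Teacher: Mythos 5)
Your proof is correct and is essentially the paper's own argument: the paper likewise shows that fiber transport between the parallel planes $u^\perp$ and $u^\perp_t$, given by $f_t(q) = q + tB(q)$, is a diffeomorphism (its inverse being defined in the same manner), so $df_p = \mathrm{Id} + t\,dB_p$ is invertible for all $t \neq 0$ and no nonzero real eigenvalue can exist; your $\Phi$, $\Psi$, and block determinant repackage exactly this, and your case analysis ($dB_p = \nabla V|_{V^\perp}$ at $p$, one zero eigenvalue forcing the other to be real hence zero) matches the paper's. The obstacle you flag---securing the flow box along the whole noncompact fiber---is not actually needed: for each fixed $t$ it suffices that $\Psi \circ \Phi = \mathrm{id}$ on a small neighborhood of $(p,t)$, which follows from $V(p+tu) = u$ and continuity, precisely the per-$t$ localization the paper uses (and in any case your global version goes through after restricting the domain to $\Phi^{-1}\bigl(\Psi^{-1}(E \times \R)\bigr)$).
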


\begin{remark}
\label{rem:skewprop}
One may formulate the corresponding statements for continuous fibrations.  The analogous statement for the first item is that the fibers through $E$ are pairwise skew if and only if, for all $q_1 \neq q_2$ in $E$, $\det\big( q_1 - q_2 \ \ B(q_1)-B(q_2) \big) \neq 0$; see \cite{Harrison}.  If not necessarily skew, there still holds the non-intersecting line property: for all $q_1 \neq q_2$ in $E$, $q_1 - q_2 \neq \lambda(B(q_1) - B(q_2))$ for any $\lambda \in \R$.
\end{remark}

We will prove Theorem \ref{thm:nononzero} in Section \ref{sec:contact}; its primary use will be to verify the contact assertion for semidegenerate fibrations.

We conclude this section with a topological classification of the space of line fibrations.  In \cite{Harrison}, the first author showed that the space of skew fibrations deformation retracts to its subspace of Hopf fibrations, and therefore has the homotopy type of $S^2 \sqcup S^2$.  The classification of all line fibrations is much simpler.

\begin{thm} The space of all line fibrations of $\R^3$ deformation retracts to the space of trivial fibrations $\R^2 \times \R$, and thus has the homotopy type of $S^2$.
\end{thm}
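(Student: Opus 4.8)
The plan is to exhibit an explicit strong deformation retraction of the space $\mathcal{F}$ of all line fibrations onto the subspace $\mathcal{T}$ of trivial (parallel-line) fibrations, and then to identify $\mathcal{T}$ with $S^2$. The key observation is that the one-parameter group of dilations $\phi_s(p) = sp$ (for $s>0$) of $\R^3$ acts on line fibrations. First I would check that if $V$ is a line fibration then so is $V_s := V(s\,\cdot)$ for every $s > 0$: since $\phi_s$ is an orientation-preserving diffeomorphism carrying oriented lines to oriented lines, it takes the partition of $\R^3$ into the fibers of $V$ to another partition into oriented lines, whose associated unit direction field is exactly $p \mapsto V(sp)$. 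The integral-curve condition transfers directly: writing the defining property of a line fibration as $V(q + t'V(q)) = V(q)$ and putting $q = sp$, $t' = st$ gives $V_s(p + tV_s(p)) = V(sp + st\,V(sp)) = V(sp) = V_s(p)$.

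Next I would define the homotopy $H(V,t) = V_{1-t}$ for $t \in [0,1]$, where $V_s = V(s\,\cdot)$ for $s \in (0,1]$ and $V_0$ is declared to be the constant field $V(0)$. Then $H(V,0) = V$, while $H(V,1)$ is the trivial fibration by lines parallel to $V(0)$, so $H(\cdot,1)$ lands in $\mathcal{T}$. Moreover any trivial fibration $V \equiv u$ satisfies $V_s \equiv u$ for all $s$, so it is held fixed throughout the homotopy; hence $H$ is a strong deformation retraction of $\mathcal{F}$ onto $\mathcal{T}$, provided it is continuous.

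The one genuine point to verify — and the main (mild) obstacle — is continuity of $H$ at the endpoint $t=1$, i.e.\ that $V_s \to V(0)$ as $s \to 0$ in the chosen topology on $\mathcal{F}$. In the compact-open $C^0$ topology this is immediate: for any compact $K \subset \R^3$ the set $sK$ shrinks to the origin, so $\sup_{p \in K}|V(sp) - V(0)| \to 0$ by continuity of $V$ at the origin. If $\mathcal{F}$ carries the $C^\infty$ topology, I would additionally observe that the $k$-th derivative of $V_s$ acquires a factor $s^k$, namely $D^k V_s(p) = s^k (D^k V)(sp)$, so every derivative tends to $0$ uniformly on compacta and $V_s \to V(0)$ in $C^\infty$ as well; joint continuity in $(V,t)$ follows from the same estimates.

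Finally I would identify $\mathcal{T}$ with $S^2$: a trivial fibration is determined by its constant direction $u \in S^2$, and evaluation $V \mapsto V(0)$ is a homeomorphism $\mathcal{T} \to S^2$ inverse to $u \mapsto (\text{constant field }u)$. Combining, $\mathcal{F} \simeq \mathcal{T} \cong S^2$, as claimed. It is worth remarking that this dilation homotopy also preserves skewness — two scaled fibers are parallel if and only if the original fibers are — yet its limit at $s=0$ is the non-skew constant field; this is precisely why the full space retracts so simply to $S^2$, whereas the skew subspace requires the finer analysis of \cite{Harrison}.
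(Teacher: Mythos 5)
Your proposal is correct and is essentially the paper's own proof: the dilation homotopy $H(V,t) = V((1-t)\,\cdot\,)$ ending at the constant field $V(0)$ is exactly the map the paper uses. You simply fill in details the paper leaves implicit (that each $V_s$ is again a line fibration, continuity at $t=1$, the strong-retraction property, and the identification of the trivial fibrations with $S^2$), all of which check out.
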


\begin{proof}  Choose an origin $0$ of $\R^3$.  Then any fibration given by a unit vector field $V(x,y,z)$ can be homotoped to the constant fibration with direction $V(0)$ via the map $V_t(x,y,z) = V((1-t)x,(1-t)y,(1-t)z)$.  The endpoint varies continuously with $V$ because the direction of the fiber through the origin changes continuously with respect to the fibration.
\end{proof}

In Section \ref{sec:topprop} we study the structural and topological properties of line fibrations as required for Theorem \ref{thm:main}.  The highlight is the technical investigation of the parallel plane pushoff map, an important and powerful tool used in the proof of Theorem \ref{thm:main}.  In Section \ref{sec:contact} we focus on semidegenerate fibrations and contact structures, and there we prove Theorems \ref{thm:contact} and \ref{thm:nononzero}.  Sections \ref{sec:topprop} and \ref{sec:contact} are independent, besides the topological properties of line fibrations which are required for the tightness assertion.  Finally, in Section \ref{sec:examples}, we provide a method for constructing nontrivial line fibrations, and we offer a number of examples, including the exotic fibrations discussed following Theorem \ref{thm:main}. \\

\noindent \textbf{Acknowledgments.}  We are grateful to Albert Fathi, Emmy Murphy, and Sergei Tabachnikov for a number of stimulating discussions on line fibrations.

\section{Topological features of line fibrations}
\label{sec:topprop}

\subsection{Structural results}

We begin with two simple but important structural results which will help our classification.  As in the previous section, we consider a fibration defined by a unit vector field $V$.  For $u \in U \coloneqq V(\R^3) \subset S^2$, $u^\perp$ refers to the linear plane orthogonal to $u$, and $S_u = V^{-1}(u) \cap u^\perp$ is the points of $u^\perp$ with fiber direction $u$.

\begin{lem}
\label{lem:convex1}
Let $u \in U$ and let $S$ be a connected component of  $S_u$.  Then the convex hull of $S$ is also contained in $S_u$, and hence each connected component of $S_u$ is convex.
\end{lem}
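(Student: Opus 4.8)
The plan is to reduce everything to a single two-point statement: if $q_1,q_2$ lie in the same connected component $S$ of $S_u$, then the entire segment $[q_1,q_2]$ lies in $S_u$. Granting this, the segment is a connected subset of $S_u$ containing $q_1\in S$, hence lies in $S$; so $S$ is closed under taking segments, and a routine induction (writing any point of $\conv(S)$ as $(1-t)p' + t\,p_k$ with $p'\in\conv\{p_1,\dots,p_{k-1}\}$ and invoking Carath\'eodory in the plane $u^\perp$) upgrades this to $\conv(S)\subseteq S$. In particular $S=\conv(S)$ is convex and contained in $S_u$, which is the assertion.

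For the two-point statement, parametrize the segment by $q_\lambda=(1-\lambda)q_1+\lambda q_2$ and suppose toward a contradiction that $V(q_\lambda)=v\neq u$ for some interior $\lambda\in(0,1)$. The main idea is to project along the direction $u$. Let $\pi_u:\R^3\to u^\perp$ be the projection with kernel $\R u$, so that $\pi_u$ fixes $u^\perp$ pointwise and collapses every fiber of direction $u$ to its unique base point in $u^\perp$. Assume first that $v\neq -u$, so $v$ is not parallel to $u$ and the fiber $m=\{q_\lambda+tv\}$ projects to an honest line $L=\pi_u(m)\subset u^\perp$ through $q_\lambda$. The key point is that disjointness of distinct fibers forces $L$ to avoid $S_u$ entirely: for any $q\in S_u$ the fiber $\ell_q=\{q+su\}$ is disjoint from $m$, while $q\in L$ would mean some point of $m$ differs from $q$ by a multiple of $u$, i.e.\ $m\cap\ell_q\neq\varnothing$. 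Thus $L\cap S_u=\varnothing$; in particular $q_1,q_2\notin L$.

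Now $q_\lambda\in L$ sits strictly between $q_1$ and $q_2$, and the signed distance to $L$ is affine along the segment, vanishing at $q_\lambda$ and nonzero at the endpoints, so $q_1$ and $q_2$ lie on opposite open sides of $L$. Since $S$ is connected, contains both endpoints, and misses $L$, this is impossible. This contradiction rules out every value $v\notin\{u,-u\}$. Hence $V$ maps the segment into the two-point set $\{u,-u\}$; being continuous on the connected segment it must be constant, and since $V(q_1)=u$ we conclude $V\equiv u$ on $[q_1,q_2]$, i.e.\ $[q_1,q_2]\subseteq S_u$.

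I expect the delicate points to be, first, that the disjointness of the spatial fibers really does survive projection as the clean statement ``$L$ avoids the base points $S_u$'' — this is what replaces the skew/non-intersecting hypotheses used in the skew case — and second, the handling of the degenerate antipodal direction $v=-u$, for which the projection $L$ collapses to a point and the side argument is unavailable. It is precisely the reduction of $V|_{[q_1,q_2]}$ to the \emph{discrete} target $\{u,-u\}$ that lets connectedness finish the job without a separate orientation analysis.
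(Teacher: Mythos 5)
Your proof is correct and rests on essentially the same mechanism as the paper's: a fiber with direction other than $\pm u$ through an intermediate point would project along $u$ to a line in $u^\perp$ which, by disjointness of fibers, avoids $S_u$ yet separates two points of the connected component $S$ --- a contradiction --- after which continuity of $V$ into the discrete set $\{u,-u\}$ on a connected set forces the constant value $u$. Your segment-based reduction is only a minor repackaging (it even sidesteps the planar fact, used implicitly in the paper, that every point of $\conv(S)$ for connected $S$ lies on a chord of $S$), and the Carath\'eodory induction you mention is unnecessary: once $S$ is closed under segments it is convex by definition, so $\conv(S)=S$.
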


\begin{proof}
Let $p \in \operatorname{Conv}(S) - S$.  There exist two points $p_1$, $p_2$ of $S$ such that the line containing $p_1$ and $p_2$ contains $p$.  Every other line in $u^\perp$ through $p$ must intersect $S$, since each such line separates $p_1$ from $p_2$.  Hence the fiber through $p$ will intersect $S \times \R$ unless it also has direction $\pm u$.  Since $\operatorname{Conv}(S)$ is connected, $V|_{\operatorname{Conv}(S)}$ is constant and equal to $u$.
\end{proof}

\begin{lem}
\label{lem:discbyline}
If $S_1$ and $S_2$ are any two distinct connected components of $S_u$, then there exists a line separating $S_1$ and $S_2$ which does not intersect $S_u$. 
\end{lem}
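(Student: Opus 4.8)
The plan is to produce the separating line not via an abstract separation theorem, but as the orthogonal projection to $u^\perp$ of a single well-chosen fiber; this makes disjointness from \emph{all} of $S_u$ (not merely from $S_1$ and $S_2$) automatic. First I would fix arbitrary points $p_1 \in S_1$ and $p_2 \in S_2$ and consider the segment $\sigma = [p_1,p_2] \subset u^\perp$. I would then locate an interior point $q \in (p_1,p_2)$ whose fiber direction satisfies $V(q) \neq \pm u$. Such a $q$ must exist: the restriction $V|_\sigma$ is continuous, so if no interior point had direction off the set $\{\pm u\}$, the image of $V|_\sigma$ would lie in the two-point set $\{u,-u\}$ and hence be constant by connectedness of $\sigma$; the constant value $u$ would force $\sigma \subset S_u$ (contradicting that $p_1,p_2$ lie in distinct components), while the value $-u$ would contradict $V(p_1)=u$.

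Next I would pass from $q$ to a line. Writing $\pi : \R^3 \to u^\perp$ for the orthogonal projection, I claim $L_q := \pi(\text{fiber through } q)$ is a genuine line through $q$ that is disjoint from $S_u$. It is one-dimensional (not a single point) precisely because $V(q) \neq \pm u$, so $V(q)$ has nonzero component along $u^\perp$. For disjointness I would use that the fibers are the integral curves of $V$ and hence pairwise disjoint (the non-intersection property recorded in Remark~\ref{rem:skewprop}): since $V(q) \neq u$, the fiber through $q$ is distinct from every direction-$u$ fiber and therefore meets none of them, i.e.\ it is disjoint from $V^{-1}(u) = S_u \times \R$. Because a point of this fiber projects into $S_u$ exactly when it already lies in $\pi^{-1}(S_u) = S_u \times \R$, the entire line $L_q$ avoids $S_u$.

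Finally I would verify that $L_q$ separates $S_1$ from $S_2$. Since $p_1,p_2 \in S_u$ while $L_q \cap S_u = \emptyset$, the line $L_q$ cannot coincide with the line spanned by $\sigma$; as both pass through the interior point $q$, they meet only there, so $p_1$ and $p_2$ lie strictly on opposite sides of $L_q$. Each component $S_i$ is connected and disjoint from $L_q$, so it lies entirely in the open half-plane containing $p_i$; hence $S_1$ and $S_2$ are strictly separated by $L_q$, a line meeting no point of $S_u$, as desired.

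I expect the main obstacle to be conceptual rather than computational: realizing that the separating line should be \emph{manufactured from a fiber}, so that avoidance of the full set $S_u$ comes for free from the global disjointness of fibers rather than having to be enforced against possibly infinitely many intervening components. The two points requiring genuine care are the connectedness argument guaranteeing an interior point of ``generic'' direction $\neq \pm u$ (needed so the projected fiber is actually a line), and the observation that disjointness of $L_q$ from $S_u$ automatically forces $L_q$ to cross $\sigma$ transversally, which is exactly what upgrades ``avoids $S_u$'' to ``separates $S_1$ and $S_2$.''
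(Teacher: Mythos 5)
Your proof is correct, and it takes a genuinely different route from the paper's. The paper argues by contrapositive: assuming no separating line avoids $S_u$, it observes that every line through any point $p$ of $\conv(S_1 \cup S_2)$ must then meet $S_u$, reruns the argument of Lemma \ref{lem:convex1} to conclude $V(p) \in \{\pm u\}$, and uses connectedness of the convex hull to force $\conv(S_1 \cup S_2) \subset S_u$, merging the two components. You instead manufacture the separating line directly: pick $q$ strictly between the components with $V(q) \neq \pm u$ (your two-point-image connectedness argument for its existence is sound), and project the fiber through $q$ to $u^\perp$; since distinct fibers are disjoint and $V^{-1}(u) = \pi^{-1}(S_u) = S_u \times \R$, avoidance of \emph{all} of $S_u$ comes for free, and transversality at $q$ upgrades this to strict separation of $S_1$ from $S_2$. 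The two proofs share the same geometric kernel --- a fiber through a point of $u^\perp$ either has direction $\pm u$ or projects to a line in $u^\perp$ missing $S_u$ --- but the paper runs this in the negative direction (every line through $p$ meets $S_u$, so $V(p) = \pm u$) and leans on the convex-hull machinery of Lemma \ref{lem:convex1}, while yours runs it in the positive direction and is self-contained, at the modest cost of the extra (easy) checks that $L_q$ is not the line through $p_1$ and $p_2$ and that each connected $S_i$ stays in one open half-plane. What your version buys is an explicit separating line and independence from Lemma \ref{lem:convex1}; what the paper's buys is brevity by reuse. One cosmetic point: citing Remark \ref{rem:skewprop} for disjointness of fibers is overkill and slightly off-target, since that remark concerns the local map $B$ on a neighborhood $E$; all you need is that the fibers of a fibration partition $\R^3$, so the fiber through $q$, having direction $V(q) \neq u$, cannot meet $V^{-1}(u)$.
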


\begin{proof}
We prove the contrapositive.  If there exists no such line, then every line which intersects $\operatorname{Conv}(S_1 \cup S_2)$ also intersects $S_u$.  By the same argument of Lemma \ref{lem:convex1}, $\operatorname{Conv}(S_1 \cup S_2) \subset S_u$, so $S_1$ and $S_2$ are not distinct connected components.
\end{proof}

\subsection{Continuity properties for skew fibrations}
\label{sec:skew}

Here we recall several features of skew fibrations.  In Section \ref{sec:nonskew} we will investigate in which sense these results hold for nonskew fibrations.

\begin{definition}[Continuity at Infinity]
\label{def:contatinf}  Given a line fibration of $\R^3$, we say that a direction $u \in U$ exhibits \emph{continuity at infinity} if, for every sequence $\left\{p_n\right\} \subset \R^3$ such that $|p_n| \to \infty$ and $\frac{p_n}{|p_n|} \to \pm u$, we have $V(p_n) \to u$.
\end{definition}

Geometrically, the hypotheses $|p_n| \to \infty$ and $\frac{p_n}{|p_n|} \to \pm u$ state that all but finitely many $p_n$ lie in each truncated double-cone
\[\left\{p \in \R^3 \ \Big| \ |p| \geq N, \ \dist_{\RP^2}\left(\frac{p}{|p|}, \pm u\right) \leq \delta\right\},
\]
One may imagine that, for a skew fibration, some small foliated neighborhood of $\ell \coloneqq V^{-1}(u)$ eventually consumes this double-cone, half of which is depicted in Figure \ref{fig:contatinf}.  Hence everything in the cone has fiber direction near $u$, and skew fibrations exhibit continuity at infinity for every direction $u \in U$.  In the opposite situation, the $1$-parameter fibrations do not exhibit continuity at infinity for any direction $u \in U$, except in the case of the trivial fibration $\R^2 \times \R$.  If a line fibration exhibits continuity at infinity with respect to $u$, then it is easy to see that $-u \notin U$; for otherwise, if $\ell'$ is a fiber with direction $-u$, then any diverging sequence of points on $\ell'$ contradicts continuity at infinity for $u$.

\begin{figure}[h!t]
\centerline{
\includegraphics[width=3in]{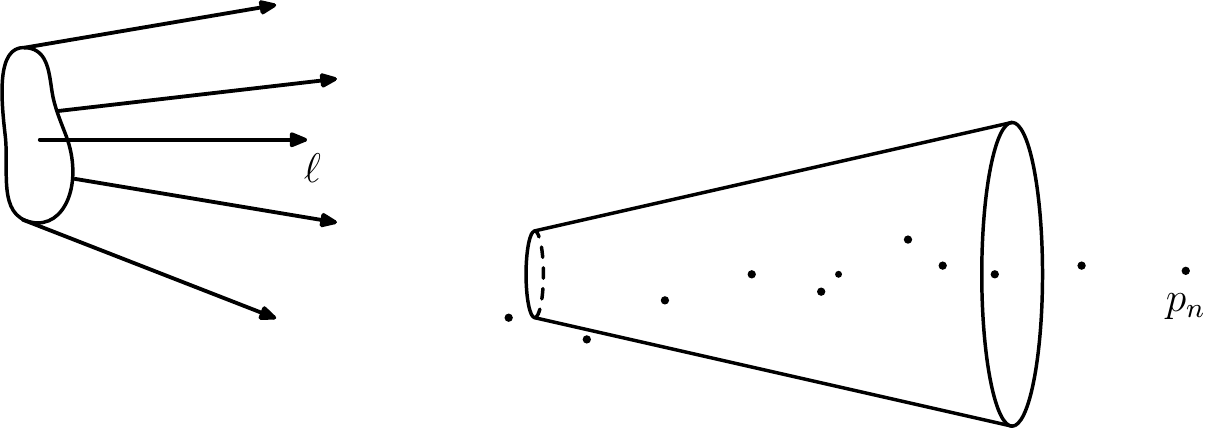}
}
\caption{A foliated neighborhood of $\ell$ consumes everything in the $u$ direction}
\label{fig:contatinf}
\end{figure}

\begin{lem}[Harrison \cite{Harrison}]
\label{lem:continfskew}
Given a skew line fibration of $\R^3$, every $u \in U$ exhibits continuity at infinity.
\end{lem}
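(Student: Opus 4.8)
The plan is to reduce to the local graphical description and run a ``parallel plane pushoff'' argument, exploiting that skewness forces this pushoff to spread out linearly. First I would normalize: since the hypotheses $|p_n|\to\infty$ and $p_n/|p_n|\to\pm u$ are unaffected by translating the origin, I may place the unique fiber $\ell = V^{-1}(u)$ through the origin with direction $u$, and use coordinates in which $u^\perp = \R^2$ is horizontal and $u$ is vertical. On a small closed disk $D \subset u^\perp$ about the origin the local description (Figure \ref{fig:local}) gives a continuous map $B : D \to \R^2$ so that the fiber through $q$ is the line $t \mapsto (q + tB(q),\, t)$; its direction is $(B(q),1)$ up to scale. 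Skewness supplies three facts: $B(0)=0$; $B(q)\neq 0$ for $q\neq 0$ (a zero would be a second fiber parallel to $\ell$); and $B$ is injective (equal values give parallel fibers). In particular $B$ is a topological embedding of $D$, so $|B|\geq\rho>0$ on $\partial D$ for some $\rho$.

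The core of the argument is the pushoff $\Phi_h : D \to \R^2$, $\Phi_h(q) = q + hB(q)$, recording where the fiber through $(q,0)$ meets the horizontal plane at height $h$; note $\Phi_h$ is exactly affine because each fiber is a genuine line. For fixed $h$ this map is injective: $\Phi_h(q_1)=\Phi_h(q_2)$ would say two fibers meet at height $h$, contradicting the non-intersecting line property (Remark \ref{rem:skewprop}). Thus $\Phi_h$ is an embedding of the closed disk, $\Phi_h(\partial D)$ is a Jordan curve, and since $\Phi_h(0)=0$ the origin lies in the bounded complementary region, which by invariance of domain equals $\Phi_h(\operatorname{int} D)$. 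The key estimate is that every point of the boundary curve satisfies $|\Phi_h(q)| \geq |h|\rho - \operatorname{diam}(D)$; since the straight segment from $0$ to any $x$ with $|x|$ below this bound meets no point of the curve, every such $x$ lies in the same (interior) component as $0$. Hence $\Phi_h(\operatorname{int}D)$ contains the horizontal disk of radius $|h|\rho - \operatorname{diam}(D)$ about the origin, for all $h$, positive or negative. This growth statement — that the tube of fibers emanating from $D$ engulfs a cone about the $u$-axis whose radius grows linearly in $|h|$ — is the step I expect to be the main obstacle, since it is precisely the mechanism by which a neighborhood of $\ell$ ``consumes'' the truncated double-cone.

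With the growth in hand the conclusion is short. Write $p_n = (x_n, h_n)$; the hypotheses give $|h_n|\to\infty$ and $|x_n| = o(|h_n|)$, so for large $n$ the point $x_n$ lies inside the engulfed disk at height $h_n$ and its fiber originates from some $q_n \in \operatorname{int} D$, i.e.\ $x_n = q_n + h_n B(q_n)$. Then $|B(q_n)| = |x_n - q_n|/|h_n| \leq (|x_n| + \operatorname{diam}(D))/|h_n| \to 0$, and because $B$ is a homeomorphism onto its image with $B(0)=0$ this forces $q_n \to 0$ and $V(p_n) = (B(q_n),1)/\sqrt{1+|B(q_n)|^2} \to u$. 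The two sign cases $p_n/|p_n| \to u$ and $p_n/|p_n|\to -u$ correspond to $h_n \to +\infty$ and $h_n \to -\infty$ and are treated identically, since the disk-growth bound is symmetric in $h$. This establishes continuity at infinity (Definition \ref{def:contatinf}) for $u$, and as $u \in U$ was arbitrary, for every direction.
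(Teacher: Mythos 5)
Your proof is correct and takes essentially the approach the paper intends: the paper only sketches the argument (deferring details to \cite{Harrison}), namely that the foliated tube of fibers through a small disk $D$ about $\ell$ consumes the truncated double-cone of Figure \ref{fig:contatinf}, and your Jordan-curve/growth estimate showing $\Phi_h(\operatorname{int} D)$ contains the disk of radius $|h|\rho - \operatorname{diam}(D)$ is precisely the rigorous form of that mechanism, with skewness entering exactly where it should (injectivity and nonvanishing of $B$ away from the origin). One cosmetic slip: the bound $|B| \geq \rho > 0$ on $\partial D$ follows from continuity, compactness of $\partial D$, and nonvanishing of $B$ there, not from $B$ being an embedding.
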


We do not repeat the proof here, but it follows the intuitive idea offered in Figure \ref{fig:contatinf}.  It is worth keeping in mind the following concrete example.

\begin{example} Suppose that $\R^3 \to U$ is the Hopf fibration depicted in Figure \ref{fig:hyper}, so that $U$ is the upper open hemisphere of $S^2$.  Let $u \in U$ be the north pole, which corresponds to the direction of the line through the origin of $\R^3$.  Let $(x,y,z)$ be rectangular coordinates on $\R^3$ and fix a pair $(x_0,y_0)$.  Then the sequence of directions $u_n$ corresponding to the points $p_n = (x_0,y_0,n)$ must converge to $u$ as $n \to \pm\infty$, independent of the values of $x_0$ and $y_0$.
\end{example}


Next we introduce a notion of continuity which is weaker than continuity at infinity but similarly important.  While continuity at infinity is stated in reference to a direction $u \in U$, this continuity is stated in reference to a given $2$-plane $P \in \Gr_2(3)$.

Let $P_0$ be the copy of $P$ which passes through the origin of $\R^3$, and let $P_t$ be the parallel translate of $P$ which is distance $t$ from the origin. Observe that there are three possible ways in which a plane $P_t$ can interact with the fibration: all fibers are transverse to $P_t$, or $P_t$ contains a single fiber, or $P_t$ contains multiple fibers which are necessarily parallel.  Let $A_t \subset P_t$ consist of points whose fibers are contained in $P_t$.  In the first case, $A_t$ is empty; in the second, a line; and in the third, a union of lines.

For each point $p \in P_t - A_t$, the unique fiber through $p$ is transverse to the plane $P_t$.  In particular, for any values of $s$ and $t$, the sets $P_t - A_t$ and $P_s - A_s$ are homeomorphic, by the map which follows transverse fibers from their intersection with $P_t - A_t$ to their intersection with $P_s - A_s$, or vice versa.  In particular, for each $t$ and $s$, there is a one-to-one correspondence between the connected components of $P_t - A_t$ and those of $P_s - A_s$.

In the skew case, each $P_t$ can contain at most one fiber.  If any plane $P_t$ contains a fiber, then so must every $P_s$, and the function $\gamma_P : \R \to P_0 \cap S^2$, which sends the parameter $t$ to the direction of the fiber contained in $P_t$, is well-defined.

\begin{lem}
Given a skew line fibration of $\R^3$, let $P \in \Gr_2(3)$ be any $2$-plane which is not transverse to all fibers.  Then $\gamma_P : \R \to P_0 \cap S^2$, which sends the parameter $t$ to the direction of the fiber contained in $P_t$, is continuous.
\end{lem}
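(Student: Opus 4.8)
The plan is to argue by contradiction, leaning on the two rigidity features already available for skew fibrations: each plane $P_t$ contains at most one fiber (two distinct fibers in a common plane would be disjoint and coplanar, hence parallel, violating skewness), and every direction in $U$ exhibits continuity at infinity (Lemma \ref{lem:continfskew}). Suppose $\gamma_P$ fails to be continuous at some $t_0$, and set $w_0 \coloneqq \gamma_P(t_0)$. Then there is a sequence $t_n \to t_0$ with $\gamma_P(t_n)$ bounded away from $w_0$; passing to a subsequence and using compactness of $P_0 \cap S^2 = S^1$, I may assume $\gamma_P(t_n) \to w_1$ with $w_1 \neq w_0$ (and $w_1 \in P_0 \cap S^2$, since this set is closed). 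Let $\ell_{t_n} \subset P_{t_n}$ be the fiber of direction $\gamma_P(t_n)$, and let $q_n$ be the foot of the perpendicular from the origin onto $\ell_{t_n}$, so that $q_n \cdot \gamma_P(t_n) = 0$ and $V(q_n) = \gamma_P(t_n)$. I split into cases according to whether the distances $|q_n| = \dist(0, \ell_{t_n})$ stay bounded.

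If $|q_n|$ is bounded, then after passing to a subsequence $q_n \to q$; since $q_n \in P_{t_n}$ and $P_{t_n} \to P_{t_0}$, the limit lies in $P_{t_0}$. Continuity of $V$ gives $V(q) = \lim \gamma_P(t_n) = w_1$, so the fiber through $q$ has direction $w_1 \in P_0$; being a line through a point of $P_{t_0}$ in a direction parallel to $P$, this fiber is contained in $P_{t_0}$. Uniqueness of the fiber in $P_{t_0}$ then forces $w_1 = w_0$, a contradiction.

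The main obstacle is the remaining case $|q_n| \to \infty$, where the fibers escape to infinity and continuity at infinity must enter. Writing $q_n = r_n + t_n\nu$ with $r_n \in P_0$ and $\nu$ a unit normal to $P$, boundedness of the $t_n$ forces $\frac{q_n}{|q_n|} \to v$ for some $v \in P_0 \cap S^2$ along a subsequence; dividing $q_n \cdot \gamma_P(t_n) = 0$ by $|q_n|$ and passing to the limit yields $v \cdot w_1 = 0$. Hence $\{v, w_1\}$ is an orthonormal basis of $P_0$, so $w_0 = (w_0\cdot v)\,v + (w_0\cdot w_1)\,w_1$ lies in their span. The idea is then to ride out along the fibers themselves: set $p_n = q_n + s_n\gamma_P(t_n) \in \ell_{t_n}$, so that $|p_n|^2 = |q_n|^2 + s_n^2$ and $\frac{p_n}{|p_n|}$ is a linear combination of $\frac{q_n}{|q_n|} \to v$ and $\gamma_P(t_n) \to w_1$ whose coefficients are governed by the ratio $s_n/|q_n|$. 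Letting $s_n/|q_n|$ tend to a suitable value in $\R \cup \{\pm\infty\}$, I can force $|p_n| \to \infty$ together with $\frac{p_n}{|p_n|} \to \pm w_0$; one of the two signs is always attainable, since $\pm w_0$ lies in the span of $v$ and $w_1$. Because $p_n \in \ell_{t_n}$ gives $V(p_n) = \gamma_P(t_n) \to w_1 \neq w_0$, this contradicts continuity at infinity for the direction $w_0 \in U$ (Lemma \ref{lem:continfskew}), completing the argument. The delicate point is that the perpendicular foot $q_n$ is chosen precisely so that its escape direction $v$ is orthogonal to $w_1$; this supplies the second basis vector of $P_0$ needed to steer the points $\frac{p_n}{|p_n|}$ onto $\pm w_0$ while they simultaneously sit on fibers of the wrong limiting direction.
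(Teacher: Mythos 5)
Your proof is correct, but it takes a genuinely different route from the paper's. The paper argues locally and constructively: with $\ell$ the fiber in $P_0$, direction $u$, and $p = \ell \cap u^\perp$, skewness makes $V$ injective on a small neighborhood $E \subset u^\perp$, so by Invariance of Domain $V|_E$ is a homeomorphism onto its image; pulling back the arc $\alpha = (P_0 \cap S^2) \cap V(E)$ gives a curve $\beta \subset u^\perp$ meeting each nearby $P_t$ exactly once, and $\gamma_P(t) = V(\beta(t))$ is then manifestly continuous. You instead argue globally by contradiction, splitting into the bounded case (handled by continuity of $V$ together with the at-most-one-fiber-per-plane property, exactly as you say) and the escaping case, which you dispatch with continuity at infinity (Lemma \ref{lem:continfskew}). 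Your escaping case is sound: choosing the perpendicular feet $q_n$ forces the escape direction $v$ to satisfy $v \cdot w_1 = 0$, so the attainable limits of $p_n/|p_n|$ as $s_n/|q_n|$ ranges over $\R \cup \{\pm\infty\}$ form the closed semicircle $\bigl\{(v + \rho w_1)/\sqrt{1+\rho^2}\bigr\}$ of $P_0 \cap S^2$, which contains one of $\pm w_0$; either sign suffices since Definition \ref{def:contatinf} is stated mod antipodes, and even the degenerate case $w_1 = -w_0$ still yields $V(p_n) \to w_1 \neq w_0$, a contradiction (one should pass to subsequences to split the bounded/unbounded cases, which you implicitly do). As for what each approach buys: the paper's proof is shorter and more self-contained, using only Invariance of Domain rather than importing the nontrivial Lemma \ref{lem:continfskew}; yours anticipates precisely the bounded-versus-divergent dichotomy that the paper later confronts in the Parallel Plane Pushoff (Lemma \ref{lem:ppp}) --- though note that your method does not extend to that nonskew setting, since continuity at infinity fails there, and the paper must replace it with the connected-component, finite-speed argument.
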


\begin{proof} Without loss of generality we show continuity at $t=0$.  Let $u$ be the oriented direction of the fiber $\ell$ contained in $P_0$, and let $p = \ell \cap u^\perp$.  Then the map $V$, restricted to a neighborhood $E \subset u^\perp$ of $p$, is continuous and injective, so by Invariance of Domain, it is a homeomorphism onto its image $V(E) \subset U$.  The great circle $P_0 \cap S^2$ intersects $V(E)$ in a small great circular arc $\alpha$ through $u$, and $(V\big|_E)^{-1}(\alpha)$ is a curve $\beta \subset u^\perp$ containing $p$.  Moreover, for sufficiently small $t$, the curve $\beta$ intersects each plane $P_t$ exactly once, since each $P_t$ contains exactly one fiber, with direction necessarily from $\alpha$.  In this way we may consider $\beta$ parametrized by $t$.  Then $\gamma_P(t) = V(\beta(t))$ is continuous.
\end{proof}

\begin{figure}[h!t]
\centerline{
\includegraphics[width=3.5in]{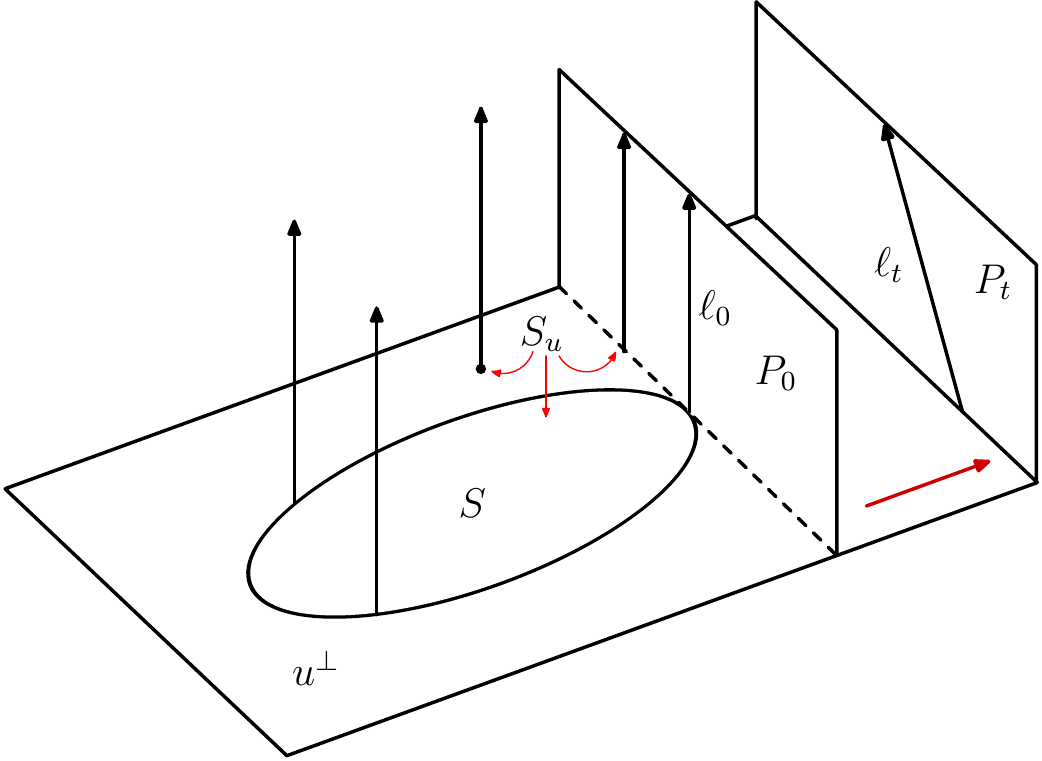}
}
\caption{Pushing off of $S$ by the plane $P$.  Although here we have depicted $S_u$ as disconnected, we will show later that this cannot be the case.}
\label{fig:ppp}
\end{figure}

\begin{lem}
\label{lem:convex}
Given a skew line fibration of $\R^3$, $U$ is convex.
\end{lem}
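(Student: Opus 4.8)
The plan is to exploit the continuity of the fiber-direction function $\gamma_P$ established in the preceding lemma. Fix two directions $u_1, u_2 \in U$, which I may assume are distinct (the case $u_1 = u_2$ is trivial). Because every $u \in U$ exhibits continuity at infinity (Lemma \ref{lem:continfskew}), and continuity at infinity for $u$ forces $-u \notin U$ (see the discussion following Definition \ref{def:contatinf}), the set $U$ contains no antipodal pair; in particular $u_2 \neq -u_1$, so the geodesic distance $d \coloneqq \dist_{S^2}(u_1, u_2)$ satisfies $0 < d < \pi$. The goal is to show that the minor great-circle arc between $u_1$ and $u_2$ lies in $U$.

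First I would produce a single plane direction witnessing both fibers. Let $\ell_1, \ell_2$ be the fibers with directions $u_1, u_2$, and let $P$ be the $2$-plane $\Span(u_1, u_2)$, with $P_0$ its copy through the origin. Since $u_1, u_2 \in P_0$, each $\ell_i$ is parallel to $P_0$ and hence lies in some translate $P_{t_i}$; thus $P$ is not transverse to all fibers. By the preceding lemma (together with the structural fact, recorded just before it, that in the skew case one fiber-containing plane forces all parallel planes to contain a fiber), the map $\gamma_P : \R \to P_0 \cap S^2$ is well-defined on all of $\R$ and continuous, and by construction $\gamma_P(t_1) = u_1$ and $\gamma_P(t_2) = u_2$.

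Next I would analyze the image $A \coloneqq \gamma_P(\R)$. It is a connected subset of the great circle $C \coloneqq P_0 \cap S^2 \cong S^1$, and every value $\gamma_P(t)$ is a fiber direction, so $A \subseteq U$. Since $U$ contains no antipodal pair, neither does $A$; a connected subset of $S^1$ containing no antipodal pair is an arc of angular length at most $\pi$. On the other hand, $A$ is connected and contains the two distinct points $u_1, u_2$, so it must contain at least one of the two arcs of $C$ joining them. The major arc has length $2\pi - d > \pi$, which $A$ cannot contain; therefore $A$ contains the minor arc, of length $d < \pi$. Since $A \subseteq U$, the minor geodesic arc between $u_1$ and $u_2$ lies in $U$, which is exactly convexity.

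The substantive content is not the continuity of $\gamma_P$, which is inherited from the preceding lemma, but the identification of the \emph{correct} arc: continuity of $\gamma_P$ by itself only guarantees that some connected subarc through $u_1, u_2$ lies in $U$, and a priori this could be the major arc. The main obstacle is ruling this out, and the key is that continuity at infinity prevents $U$ --- and hence $A$ --- from containing antipodal points, which caps the angular length of $A$ at $\pi$ and forces it to be the minor arc. The remaining work is the elementary bookkeeping on $S^1$ (no antipodal pair implies length at most $\pi$; a connected set meeting two points contains an arc between them).
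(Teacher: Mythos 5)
Your proof is correct and takes essentially the same approach as the paper: the paper likewise sets $P = \Span\left\{u_1, u_2\right\}$, invokes continuity of $\gamma_P$, and observes that its image is a connected arc of $P \cap S^2$ containing no antipodal points, which forces the shorter arc from $u_1$ to $u_2$ into $U$. The details you supply --- well-definedness of $\gamma_P$ because $\ell_1$ lies in a translate of $P$, and the absence of antipodal pairs in $U$ via Lemma \ref{lem:continfskew} and the remark following Definition \ref{def:contatinf} --- are precisely the justifications left implicit in the paper's one-line proof.
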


\begin{proof}
Given distinct $u_1, u_2 \in U$, let $P$ be the plane spanned by the vectors $u_1$ and $u_2$.  Then $\gamma_P$ is continuous, so its image is a connected arc of the great circle $S^2 \cap P$ with no antipodal points; hence the shorter segment of the great circle connecting $u_1$ and $u_2$ is inside $U$.
\end{proof}

\subsection{Continuity of the parallel plane pushoff}
\label{sec:nonskew}

To motivate the study of $\gamma_P$ for nonskew fibrations, consider the following: if $u \in U$ is some direction in a nonskew fibration, and if $p \in S_u$ is a boundary point of some (necessarily convex, by Lemma \ref{lem:convex1}) connected component $S$ of $S_u$, then we can choose a supporting line $m \ni p$ of $S$ and consider the plane $P_0$ spanned by $m$ and $u$, which contains the fiber through $p$.  Now we would like to ``push off" of $p$ in the direction perpendicular to $P_0$ and would like to make some statements about the directions of fibers contained in $P_t$.  We often refer to this process, depicted in Figure \ref{fig:ppp}, as the Parallel Plane Pushoff.

\begin{definition}
A plane $P \in \Gr_2(3)$ is \emph{disconnected by fibers} if some (and hence every) translate $P_t$ is disconnected by one or more fibers contained in $P_t$.
\end{definition}

For example, a plane containing a single fiber is disconnected by fibers.  A plane which contains a half-plane of parallel fibers and no other fibers is \emph{not} disconnected by fibers.

Consider a plane $P \in \Gr_2(3)$ disconnected by fibers.  Since no plane can contain two nonparallel fibers, each translate $P_t$ contains only one (unoriented) direction of fibers.  Let $\tilde{\gamma}_P : \R \to (P \cap S^2) / \Z_2$ send $t$ to the unoriented direction of the fibers contained in $P_t$.

\begin{lem}[Parallel Plane Pushoff]
\label{lem:ppp}
Given a line fibration of $\R^3$ and any plane $P \in \Gr_2(3)$ disconnected by fibers, the parallel plane pushoff map $\tilde{\gamma}_P : \R \to (P \cap S^2) / \Z_2$ is continuous.
\end{lem}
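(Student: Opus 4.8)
The plan is to prove continuity at an arbitrary parameter value, which after relabeling the plane family we may take to be $\tau = 0$; write $u$ for a fiber direction contained in $P_0$, so that $\tilde{\gamma}_P(0) = \pm u$. The obstruction to simply copying the skew-case proof is that $V$ need not be injective on a neighborhood of a base point of a fiber, so Invariance of Domain is unavailable and there is no local section $\beta$ producing the fiber contained in $P_\tau$. Instead I would argue by contradiction, combining the disconnection hypothesis (which will \emph{trap} a fiber near $P_0$) with a compactness argument (which extracts a limiting fiber) and the elementary fact that a plane cannot contain two nonparallel fibers.

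First I would exploit the disconnection to fix the separating data. Since $P$ is disconnected by fibers, $A_0$ separates $P_0$ into at least two components; choose points $a, b$ in two distinct components $C_1, C_2$ of $P_0 - A_0$. Their fibers $F_a, F_b$ are transverse to $P_0$ (a point of $P_0$ lies in $A_0$ exactly when its fiber is contained in $P_0$), hence transverse to every parallel translate, so $a(\tau) \coloneqq F_a \cap P_\tau$ and $b(\tau) \coloneqq F_b \cap P_\tau$ are single points depending continuously on $\tau$, with $a(\tau) \to a$ and $b(\tau) \to b$ as $\tau \to 0$. By the transverse-fiber homeomorphism $P_0 - A_0 \cong P_\tau - A_\tau$ and the induced bijection on connected components established above, $a(\tau)$ and $b(\tau)$ lie in distinct components of $P_\tau - A_\tau$. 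Consequently the segment $\overline{a(\tau) b(\tau)} \subset P_\tau$ must meet $A_\tau$; I would choose $c(\tau)$ in this intersection and let $\ell_\tau$ be the fiber through $c(\tau)$, whose unoriented direction is $\tilde{\gamma}_P(\tau)$.

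Now suppose, for contradiction, that $\tilde{\gamma}_P$ is discontinuous at $0$: there is a sequence $\tau_n \to 0$ with $\tilde{\gamma}_P(\tau_n)$ bounded away from $\pm u$. By compactness of $(P \cap S^2)/\Z_2$ we may pass to a subsequence and assume $\tilde{\gamma}_P(\tau_n) \to w_0$ with $w_0 \neq \pm u$. The crossing points $c(\tau_n)$ lie on segments converging to the compact segment $\overline{ab}$, so they are bounded; passing to a further subsequence, $c(\tau_n) \to c_\infty \in \overline{ab} \subset P_0$. Continuity of $V$ then gives $V(c_\infty) = \lim V(c(\tau_n)) = \pm w_0$, so the fiber through $c_\infty$ has direction $w_0$; since $c_\infty \in P_0$ and $w_0$ is a direction of $P$, this entire fiber lies in $P_0$. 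But $P_0$ already contains fibers of direction $u$ (as $A_0 \neq \emptyset$), and two nonparallel lines in a plane must intersect, contradicting the disjointness of distinct fibers. This contradiction forces $\tilde{\gamma}_P(\tau) \to \pm u$ and establishes continuity.

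The step I expect to be most delicate is the persistence of the separation, namely verifying that $a(\tau)$ and $b(\tau)$ genuinely remain in different components of $P_\tau - A_\tau$ for all small $\tau$. This rests on the global transverse-fiber homeomorphism and its component bijection, which must be invoked carefully: a fiber transverse to $P_0$ has direction not lying in $P$, hence is transverse to every translate and meets each exactly once, so the flow $P_0 - A_0 \to P_\tau - A_\tau$ is a well-defined homeomorphism globally rather than only for small $\tau$. The remaining inputs — continuity of $a(\tau)$ and $b(\tau)$, boundedness of the $c(\tau)$, and the passage to the limit via continuity of $V$ — are routine.
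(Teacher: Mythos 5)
Your proof is correct, but it takes a genuinely different and more economical route than the paper's. The paper works locally around a bounded component $Y$ of $P_0 \cap S_u$: it picks points $q_1, q_2$ on either side of $Y$ in the line $P_0 \cap u^\perp$, takes fibers $\ell_n \subset P_{t_n}$ separating the corresponding components of $P_{t_n} - A_{t_n}$, and analyzes the nearest points $p_n$ of $\ell_n$ to $Y$, splitting into the case where $p_n$ is bounded (where continuity of $V$ finishes, exactly as in your limiting step) and the case where $p_n$ diverges or stays at distance at least $\varepsilon$ from $Y$ (which it kills by noting that the transverse fiber through $q_1$ travels only a small distance into $P_{t_n}$ for small $t_n$, so it cannot land in a component at distance at least $C \geq \varepsilon$ from $Y$). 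Your argument short-circuits that entire divergence case: since $a(\tau)$ and $b(\tau)$ lie in distinct components of $P_\tau - A_\tau$ (via the same global transverse-fiber homeomorphism the paper sets up before the lemma), the segment $\overline{a(\tau)b(\tau)}$ must cross $A_\tau$, and because $a(\tau) \to a$ and $b(\tau) \to b$, the crossing points $c(\tau_n)$ are trapped in a fixed bounded set no matter how $A_{\tau_n}$ behaves at infinity; compactness plus the fact that no plane contains two nonparallel fibers then forces the limiting direction to be $\pm u$. What your streamlining gives up is the sharper by-product the paper extracts from its localized setup and records immediately after its proof: the existence of points $p_n \in A_{t_n}$ converging to a point of direction $u$ within $\varepsilon$ of the chosen component $Y$ (and, by a diagonal argument, to a point of $Y$ itself), which is precisely what powers the oriented, component-local map $\gamma_{P,E}$ of Corollary \ref{cor:localppp}. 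That loss is easily repaired inside your framework: taking the anchors $a, b$ to be the paper's $q_1, q_2$, within $\varepsilon$ of $Y$ on an interval $I$ containing no points of $V^{-1}(-u)$, your crossing points converge into $I \cap A_0$, whose points have direction $u$, recovering the localization by the same bounded-segment trick.
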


Recall that $A_t \subset P_t$ is defined as the set of points whose containing fibers are contained in $P_t$.  We occasionally abuse language by referring to a sequence when it may be necessary to pass to a convergent subsequence.

We refer the reader to Figure \ref{fig:ppp} for a depiction of this lemma, and we offer some intuition for the proof.  We argue continuity at $t = 0$.  If there is a convergent sequence of points $p_n \in A_{t_n}$ for some sequence $t_n \to 0$, then the statement follows from continuity of the fibration.  Otherwise the sets $A_{t_n}$ are ``diverging'' as $t_n \to 0$.  But there is a homeomorphism $P_0 - A_0 \to P_{t_n} - A_{t_n}$ which respects connected components, and divergence of $A_{t_n}$ would imply that the homeomorphism sends points infinitely far away in small finite time.  This is depicted in Figure \ref{fig:pppproof}; $q_1$ and $q_2$ are in distinct connected components of $P_0 - A_0$, and the fibers through these points correspond to distinct connected components of $P_{t_n} - A_{t_n}$.

\begin{proof} We prove continuity of $\tilde{\gamma}_P$ at $t=0$.  By the hypothesis that $P_0$ is disconnected by fibers, $P_0$ contains a fiber with some direction $u$, and $P_0 \cap S_u$ contains a bounded connected component $Y$.  Choose a small $\varepsilon$ neighborhood $I$ of $Y$ inside the line $P_0 \cap u^\perp$ containing no points from $V^{-1}(-u)$, if any exist.  Note that $I$ may possibly contain points from other connected components of $P_0 \cap S_u$.  In Figure \ref{fig:pppproof}, $I$ is the dotted line containing $Y$.

\begin{figure}[h!t]
\centerline{
\includegraphics[width=3.5in]{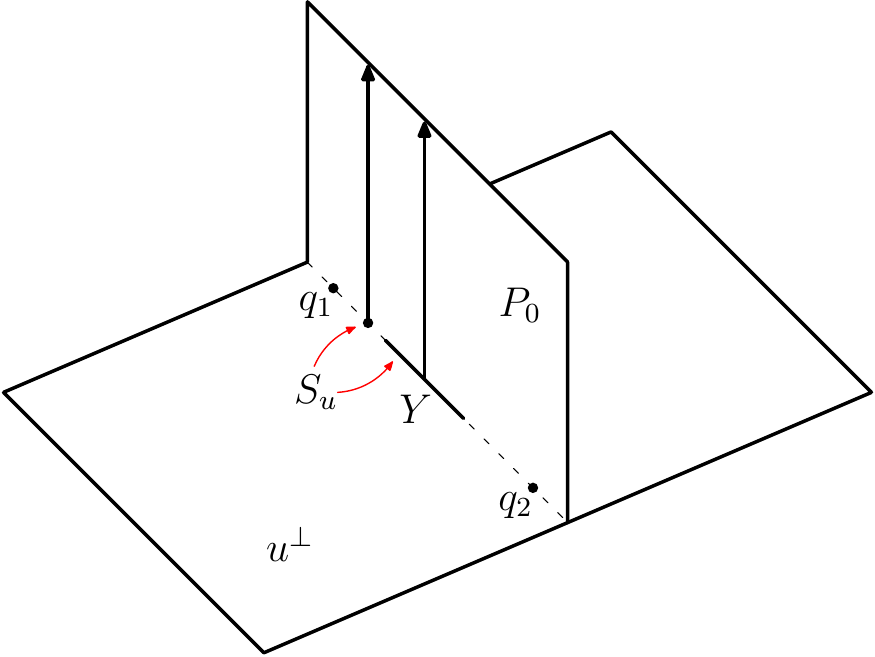}
}
\caption{The fibers emanating from $q_1$ and $q_2$ correspond to distinct connected components of $P_t - A_t$ and cannot go infinitely far in finite time.  Although here we have depicted $S_u$ as disconnected, we will show later that this cannot be the case.}
\label{fig:pppproof}
\end{figure}

Now $Y$ separates $P_0 \cap u^\perp$, so $I - V^{-1}(u)$ is disconnected.  Let $q_1$ and $q_2$ be points in distinct connected components of $I - V^{-1}(u)$.  Let $t_n \to 0$ and observe that each of $q_1$ and $q_2$ correspond to a connected component $Q_{n_1}$ and $Q_{n_2}$ of $P_{t_n} - A_{t_n}$.  Choose any fiber $\ell_n$ in $P_{t_n}$ which separates $Q_{n_1}$ from $Q_{n_2}$, in the sense that $Q_{n_1}$ and $Q_{n_2}$ lie in distinct connected components of $P_{t_n} - \ell_n$.  Let $u_n$ be the oriented direction of $\ell_n$ and $p_n$ be the point of $\ell_n$ at which the minimum distance from $\ell_n$ to $Y$ is achieved.

If the sequence $p_n$ is bounded, then there exists a convergent subsequence to a point $p' \in P_0$.  Therefore by continuity of the fibration, the sequence $u_n$ converges to the direction $u'$ of the fiber through $p'$.  Now $u_n$ is a sequence of points in the great circle $P \cap S^2$, so the limit $u'$ must also lie in this circle, and so the fiber through $p'$ must be contained in $P_0$, hence $u'$ must be either $\pm u$.

Let $\dist(p',Y) = C$.  If $C = 0$, then $p'$ is in $Y$, and $u' = u$.  If $C < \varepsilon$, then the fiber through $p'$ must intersect $I$, in which case it must have direction $u$, since $I$ was chosen to contain no points from $V^{-1}(-u)$.

There are two remaining cases: either $C \geq \varepsilon$, or the sequence $p_n$ is unbounded, but this latter case also implies the existence of $C \geq \varepsilon$ such that $\dist(p_n,Y) > C$ for all $n$ sufficiently large.  Therefore, in either case, the sequence of distances from $Y$ to one of the connected components, say $Q_{n_1}$, is bounded below by $C$.  But we have $\dist(q_1, Y) < \varepsilon \leq C$, so we can choose a sufficiently small time $t'$, such that the endpoint of the fiber segment, emanating from $q_1$ and continuing for time $t'$, is still less than distance $C$ from $Y$.  So it is impossible that this fiber segment touches $Q_{n_1}$ for times $t_n$ sufficiently small.
\end{proof}

In fact, we have not only proven the continuity of the unoriented map $\tilde{\gamma}$, but we have shown that for any $Y$ as above, and for any $\varepsilon > 0$, there is a sequence of points $p_n \in A_{t_n}$, converging to a point $p$ with direction $u$, such that $p$ is within distance $\varepsilon$ of $Y$.  Repeating this with smaller values of $\varepsilon$ and using a diagonal argument establishes that there exists a sequence of points in $A_{t_n}$ which converge to a point in $Y$. This idea will allow us to define an \emph{oriented}, continuous map $\gamma_P$ as follows.

Let $u \in U$, $P$ a plane containing $u$ which is disconnected by fibers, $u^\perp$ the linear plane orthogonal to $u$, $S$ some connected component of $S_u$, and $A$ the subset of $u^\perp$ consisting of points which are contained in fibers fully contained in $u^\perp$, so that $A$ is a (possibly empty) union of lines.  Let $E$ be the connected component of $u^\perp - A$ containing $S$.  When $A$ is empty, $E = u^\perp$; otherwise, $E$ could be a half-plane or strip between two parallel lines.  In this case, the image $V(E)$ must be contained in the open hemisphere of $S^2$ which is centered at $u$, since $V(E)$ cannot intersect the equator orthogonal to $u$.

Let $P_0$ be any translate of $P$ which intersects $S$.  Define the map $\gamma_{P,E} : \R \to P \cap S^2$ which sends the parameter $t$ to the unique direction of the line(s) contained in $P_t$ and intersecting the interval $P_t \cap E$.  Note that $\gamma_{P,E}$ is well-defined: there is at least one candidate output, since the map $P_0 - A_0 \to P_t - A_t$ respects connected components, and there are not two possible outputs, because the image $V(E)$ is contained in a hemisphere.


Now $\gamma_{P,E}$ is just the oriented counterpart to $\tilde{\gamma}_P$ of Lemma \ref{lem:ppp}, and since the restriction to $E$ is contained in a hemisphere, continuity of $\gamma_{P,E}$ follows from Lemma \ref{lem:ppp}.  We have shown the following.

\begin{cor}
\label{cor:localppp}
Given $u \in U$, let $S \subset u^\perp$ be any connected component of $S_u$ and $E$ as defined above.  Then for every $P \in \Gr_2(3)$ that contains $u$ and is disconnected by fibers, the \emph{oriented} map $\gamma_{P,E} : \R \to S^2 \cap P$ is defined and continuous.
\end{cor}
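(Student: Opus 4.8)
The plan is to realize $\gamma_{P,E}$ as the oriented lift of the unoriented map $\tilde{\gamma}_P$ through the double cover $q : P \cap S^2 \to (P \cap S^2)/\Z_2$, and to deduce its continuity from that of $\tilde{\gamma}_P$ established in Lemma \ref{lem:ppp}. The whole argument hinges on the fact recorded above, that $V(E)$ lies in the open hemisphere $H$ centered at $u$; equivalently, that $\gamma_{P,E}$ takes values in the open semicircle $C \coloneqq H \cap P \cap S^2$, which contains no antipodal pair.

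First I would settle well-definedness. Since no plane contains two nonparallel fibers, every translate $P_t$ carries fibers of a single unoriented direction, namely $\tilde{\gamma}_P(t)$. Any fiber contained in $P_t$ that meets $E$ does so at a point $q \in E$, so its oriented direction is $V(q) \in V(E) \subset H$; hence all such fibers share the unoriented direction forced by $P_t$ and all lie in $H$, which pins down a single oriented value $\gamma_{P,E}(t) \in C$. For existence, at the base translate $P_0$ the point of $S \cap P_0 \subset E$ lies on a fiber of direction $u$ contained in $P_0$, so $\gamma_{P,E}(0) = u$; for other $t$ one transports this along the connected-component-respecting homeomorphism $P_t - A_t \to P_0 - A_0$ (following transverse fibers), which produces a fiber of the requisite direction meeting the interval $P_t \cap E$. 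I would make precise here that the natural domain is the interval $\left\{ t : P_t \cap E \neq \emptyset \right\}$, which contains $0$ by convexity of $E$ and is all of $\R$ in the principal case $A = \emptyset$, $E = u^\perp$.

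With well-definedness in hand, continuity is formal. The quotient $q$ is a $2$-to-$1$ covering of circles, hence an open map; restricted to the open arc $C$ it is injective and open, thus a homeomorphism onto the open set $q(C)$. By construction $q \circ \gamma_{P,E} = \tilde{\gamma}_P$, and since $\gamma_{P,E}$ takes values in $C$ we get that $\tilde{\gamma}_P$ takes values in $q(C)$ with $\gamma_{P,E} = (q|_C)^{-1} \circ \tilde{\gamma}_P$. As $\tilde{\gamma}_P$ is continuous by Lemma \ref{lem:ppp} and $(q|_C)^{-1}$ is continuous, so is $\gamma_{P,E}$.

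The genuinely delicate point is not the lift but the input: the continuity of the unoriented map $\tilde{\gamma}_P$, which is exactly Lemma \ref{lem:ppp} and whose proof must rule out the escape of the sets $A_{t_n}$ to infinity as $t_n \to 0$. Granting that lemma, the only remaining care is the well-definedness bookkeeping above -- confirming that for each $t$ in the domain there is a fiber in $P_t$ meeting $E$, and that the hemisphere condition forbids antipodal ambiguity so that the lift through $q$ is both single-valued and continuous.
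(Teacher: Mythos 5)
Your proposal is correct and takes essentially the same route as the paper: well-definedness via the component-respecting homeomorphism $P_0 - A_0 \to P_t - A_t$ together with the hemisphere condition on $V(E)$, and continuity by lifting the unoriented map $\tilde{\gamma}_P$ of Lemma \ref{lem:ppp} through the open semicircle. Your explicit covering-space formulation of the lift $(q|_C)^{-1} \circ \tilde{\gamma}_P$ and the remark about the domain $\left\{ t : P_t \cap E \neq \emptyset \right\}$ make precise details the paper leaves implicit, but the argument is the same.
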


\begin{lem}[No Canyon Lemma]
\label{lem:nocanyon}
Consider a line fibration which is not a $1$-parameter fibration.  Let $u \in U$, $S \subset u^\perp$ be any connected component of $S_u$ and $E$ as defined above.  Then for every $P \in \Gr_2(3)$ that contains $u$ and is disconnected by fibers, $\gamma_{P,E}$ is monotone, considered as a map into the open great semicircular subset of $P \cap S^2$ centered at $u$. 
\end{lem}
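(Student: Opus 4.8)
The plan is to argue by contradiction, using that a continuous map into an arc is monotone precisely when every one of its level sets is connected. Fix coordinates so that $u = (0,0,1)$, so that $P$ is the $xz$-plane and its translates $P_t$ are the planes $\{y = t\}$; parametrize the open semicircle of $P \cap S^2$ centered at $u$ by the angle $\theta \in (-\pi/2,\pi/2)$, and write $\theta(t)$ for the angle of $\gamma_{P,E}(t)$, which is continuous by Corollary \ref{cor:localppp}. If $\gamma_{P,E}$ were not monotone, some level set would be disconnected, producing parameters $t_1 < t' < t_2$ with $\theta(t_1) = \theta(t_2)$ but $\theta(t') \neq \theta(t_1)$. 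Writing $w$ for the common direction at $t_1, t_2$ and $w' \neq w$ for the direction at $t'$, I note that every fiber tracked by the pushoff is \emph{contained} in its plane $P_t$ and hence has direction in $P$, so it lies entirely in a single plane $\{y = \mathrm{const}\}$. Thus $\ell_{t_1}$ and $\ell_{t_2}$ are parallel fibers of direction $w$ sitting in $\{y=t_1\}$ and $\{y=t_2\}$, while $\ell_{t'}$ has direction $w' \neq w$ and sits in the intermediate plane $\{y=t'\}$.

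The heart of the argument is a crossing contradiction that takes place inside the single plane $\{y=t'\}$: two fibers lying in a common plane with distinct directions must intersect, which is forbidden. So the plan is to produce a fiber of direction $w$ lying in $\{y=t'\}$; having direction distinct from $w'$, it would meet $\ell_{t'}$ in one point of $\{y=t'\}$, giving the contradiction. To build it, observe that $\ell_{t_1}$ and $\ell_{t_2}$ meet $w^\perp$ in points $a_1, a_2 \in S_w$ whose $y$-coordinates are exactly $t_1$ and $t_2$. If $a_1$ and $a_2$ lie in a common connected component of $S_w$, then by convexity of that component (Lemma \ref{lem:convex1}) the whole segment $[a_1,a_2]$ lies in $S_w$; since the $y$-coordinate varies linearly along it from $t_1$ to $t_2$, some point of $[a_1,a_2] \subset S_w$ has $y$-coordinate $t'$, and the direction-$w$ fiber through it lies in $\{y=t'\}$, completing the contradiction.

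It remains to treat the case in which $a_1$ and $a_2$ always fall into distinct components of $S_w$; equivalently, the case in which $S_w$ avoids the intermediate $y$-level, so that no parallel fiber is available to cross $\ell_{t'}$. By Lemma \ref{lem:discbyline} the two components are separated by a line in $w^\perp$ disjoint from $S_w$, and the direction-$w$ fibers then occupy a disconnected, $y$-gapped region -- precisely the configuration exhibited by a $1$-parameter fibration whose parallel planes are the $\{y=\mathrm{const}\}$ and whose direction function returns to the value $w$. I expect this to be the main obstacle, and it is exactly where the hypothesis enters: I must show this escape scenario cannot occur for a non-$1$-parameter fibration, by upgrading the separated-component picture to an actual parallel-plane foliation -- i.e.\ by forcing some component of some base space to contain an entire line, whence the fibration is $1$-parameter after all. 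I would attempt this by running the level-set analysis simultaneously over all directions $w_\psi$ whose angle $\psi$ lies strictly between the extreme value of $\theta$ on $[t_1,t_2]$ and $\theta(t_1)$: each $S_{w_\psi}$ must then have a $y$-gap at the common intermediate level, and the continuity of $\gamma_{P,E}$ together with Lemma \ref{lem:discbyline} should force these gaps to align into a single fully foliated plane, yielding the required line and hence the $1$-parameter structure.
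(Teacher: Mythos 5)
Your reduction and your first case are sound: non-monotonicity of the continuous map $\gamma_{P,E}$ (Corollary \ref{cor:localppp}) does produce $t_1 < t' < t_2$ with parallel fibers $\ell_{t_1}, \ell_{t_2}$ of direction $w$ and an intermediate fiber $\ell_{t'}$ of direction $w' \neq w$ (and $w' \neq -w$, since the image lies in an open semicircle), and when $a_1 = \ell_{t_1} \cap w^\perp$ and $a_2 = \ell_{t_2} \cap w^\perp$ lie in a common connected component of $S_w$, convexity of components (Lemma \ref{lem:convex1}) yields a direction-$w$ fiber in the plane $\{y = t'\}$, which must meet $\ell_{t'}$ --- a genuine contradiction. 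But your second case, where $a_1$ and $a_2$ fall in distinct components of $S_w$, is a genuine gap, and you concede as much. It cannot be dismissed: at this point in the paper nothing rules out disconnected base spaces --- connectivity (Lemma \ref{lem:connected}) and isolation (Lemma \ref{lem:isolated}) are proved \emph{later} and both rest on the No Canyon Lemma, so invoking them here would be circular. Your proposed repair, running the level-set analysis over all intermediate directions $w_\psi$ and hoping the resulting ``gaps align into a single fully foliated plane,'' names no mechanism: the direction-$w_\psi$ fibers sit in planes $P_t$ at $\psi$-dependent levels, the sets $S_{w_\psi}$ live in the varying planes $w_\psi^\perp$ and may a priori be unbounded and disconnected, and Lemma \ref{lem:discbyline} only supplies a separating line in each $w_\psi^\perp$ separately, with no coherence across $\psi$. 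Producing an entire line inside some base space is exactly the hard step, and nothing in your sketch forces it.

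For contrast, the paper's proof never splits on the connectivity of any $S_w$; this is precisely what its different mechanism buys. It works entirely in $u^\perp$ with the local generating map $B$ on $E$: placing the origin on the direction-$u$ fiber in the intermediate plane $P_0$, the continuous function $q \mapsto \det\big( \ q \ \ B(q) \ \big)$ takes opposite signs at $q_1 \in P_{t_1} \cap u^\perp$ and $q_2 \in P_{t_2} \cap u^\perp$ (since $B(q_1) = B(q_2)$ while $q_1, q_2$ lie on opposite sides of $P_0 \cap u^\perp$), so by the non-intersection property in the remark following Theorem \ref{thm:nononzero} every path in $E$ from $q_1$ to $q_2$ must cross the zero set, which is exactly $S_u$. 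Routing such paths along the $S_u$-free lines $P_{t_1} \cap u^\perp$ and $P_{t_2} \cap u^\perp$ and varying the crossing segment shows that every line through every point of $m = P_{t_1} \cap u^\perp$, other than $m$ itself, meets $S_u$; hence every fiber through $m$ is trapped in $P_{t_1}$, forcing the fibration to be $1$-parameter and contradicting the hypothesis. That sign-change-plus-trapping argument is the ingredient your outline is missing: if you want to keep your (nicely elementary) crossing argument for the same-component case, the disconnected case should be closed by an argument of this kind rather than by an unsubstantiated alignment of gaps.
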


Geometrically, the lemma prohibits canyons as depicted in Figure \ref{fig:canyon}.  Canyons are prohibited because points which are trapped "inside" the canyon can have no direction except parallel to the canyon, which cannot happen unless the entire fibration is $1$-parameter.

\begin{figure}[h!t]
\centerline{
\includegraphics[width=2in]{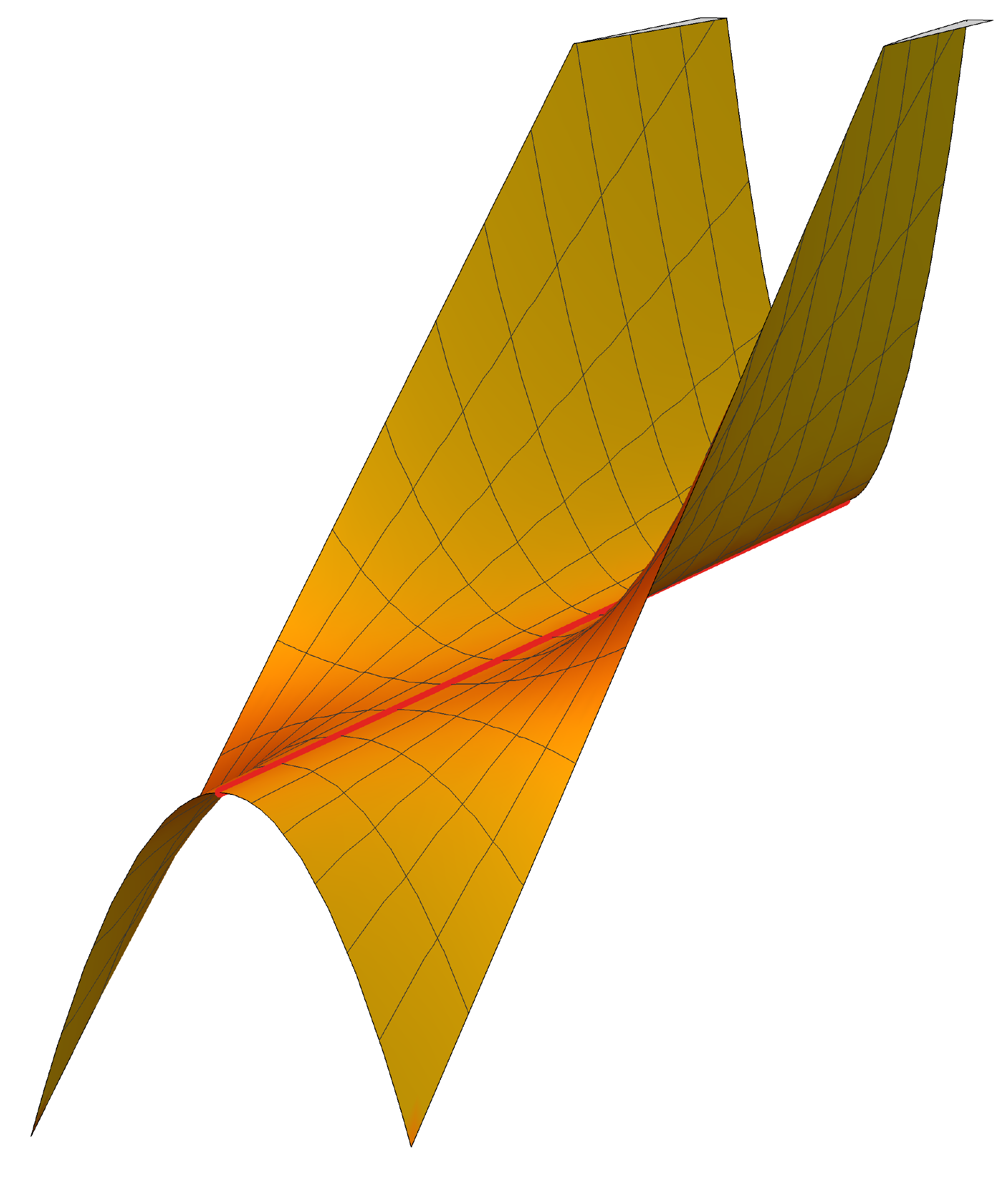}
}
\caption{A ruled canyon which may not appear as part of a non-$1$-parameter line fibration}
\label{fig:canyon}
\end{figure}
 
\begin{proof}
Although relatively clear geometrically, we give an analytic argument for the contrapositive.  We write $\gamma$ for $\gamma_{P,E}$.  Suppose that there exist $t_1 < 0 < t_2$, with $\gamma(t_1) = \gamma(t_2)$ not equal to $u \coloneqq \gamma(0)$.  Let $q_i$ be the intersection of $u^\perp$ with any fiber $\ell_{t_i}$ contained in plane $P_{t_i}$.  Let $\ell$ be any line contained in $P_0$ and label $\ell \cap u^\perp$ as the origin.  By definition of $E$, the map $B$ (see Section \ref{sec:skew}) is well-defined on $E$.  Then $\det( q_1  \ \ B(q_1) )$ has a different sign than $\det( q_2 \ \ B(q_2) )$, because $q_1$ and $q_2$ point in opposite directions with respect to $B(q_1) = B(q_2)$.  Incorporating this sign difference into the remark following Theorem \ref{thm:nononzero}, every path from $q_1$ to $q_2$ within $E$ must intersect $S_u$.

Because $S_u$ does not intersect $P_{t_i}$, we may consider paths from $q_1$ to $q_2$ which first travel some distance along the line $P_{t_1} \cap u^\perp$, then straight to the line $P_{t_2} \cap u^\perp$, then to $q_2$ within $P_{t_2} \cap u^\perp$.  We see that every line in $u^\perp$, through every point of $m \coloneqq P_{t_1} \cap u^\perp$, except for $m$ itself, must intersect $S_u$.  Then the fiber through every point of $m$ must lie in the plane $P_{t_1}$, hence have must have direction $\gamma(t_1)$, and so the fibration is $1$-parameter.
\end{proof}

\subsection{Connectivity of $S_u$ for non-$1$-parameter fibrations}

The connectivity of $S_u$ is fairly technical.  We begin with some elementary statements about convex sets.

\begin{definition}
We say that a line $m \in \R P^1$ \emph{supports} a closed convex set $S \subset \R^2$ if there exists some translate $m'$ of $m$ for which $S$ lies on one side of $m'$.  We say that $m$ \emph{strictly supports} $S$ if additionally $m' \cap \partial S$ is a nonempty bounded set (a point or closed interval).
\end{definition}

\begin{examples}
\begin{enumerate}
 \item If $S$ is bounded, every element $m \in \R P^1$ strictly supports $S$.
 \item If $S$ is the closed convex component ``inside'' one branch of a hyperbola, then the asymptotic directions support $S$, but not strictly.
 \item If $S$ is a ray, then every $m \in \RP^1$, except for the line itself, strictly supports $S$.
 \item If $S$ is a line, then no lines strictly support $S$, but $m = S \in \RP^1$ non-strictly supports $S$.
 \end{enumerate}
 \end{examples}

\begin{lem}
\label{lem:support}
For any closed convex set $S$, there is a partition of $\RP^1$ into three (possibly empty) subsets: the open, connected subset $\mathscr{S}$ of lines which strictly support $S$, the finite set $\mathscr{N}$, contained in $\partial \mathscr{S}$ (if it exists), consisting of lines which non-strictly support $S$, and the connected subset $\mathscr{I}$ of lines for which every translate intersects $S$.  Moreover, $\mathscr{S}$ is nonempty provided that $S$ is not a line.
\end{lem}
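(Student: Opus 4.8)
The plan is to encode all three conditions in the single one-dimensional shadow $J_m:=\pi_v(S)\subseteq\R$, the orthogonal projection of $S$ onto a normal line $\R v$ of $m$ (well defined up to the sign of $v$). Since $S$ is convex, $J_m$ is an interval, and the trichotomy is read off its endpoints: $m\in\mathscr I$ exactly when $J_m=\R$ (every level is attained, i.e.\ every translate meets $S$); $m$ supports $S$ exactly when $J_m\ne\R$ (a finite extreme value of $J_m$ yields a supporting translate); and a supporting $m$ lies in $\mathscr S$ exactly when some finite extreme value of $J_m$ (computed for $v$ or for $-v$) is attained along a bounded face $F_v=\{x\in S:\langle x,v\rangle=h_S(v)\}$, where $h_S$ is the support function. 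This already shows $\mathscr S,\mathscr N,\mathscr I$ are disjoint and cover $\RP^1$, since ``$J_m=\R$'' and ``$J_m\ne\R$'' are complementary and strictness subdivides the supporting lines.

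To get at the topology I would pass to the recession cone $C:=\operatorname{rec}(S)$ and its polar cone $C^{*}=\{v\in\R^2:\langle v,d\rangle\le0\ \text{for all}\ d\in C\}$, and record the following dictionary as a short self-contained lemma. Writing $D=\{v\in S^1:h_S(v)<\infty\}$, one has $\operatorname{int}C^{*}\cap S^1\subseteq D\subseteq C^{*}\cap S^1$; if $v\in\operatorname{int}C^{*}$ then $\langle\,\cdot\,,v\rangle$ is coercive on $S$ (it tends to $-\infty$ along every recession ray, as $\langle v,d\rangle<0$ for all $d\in C\setminus\{0\}$), so its maximum is attained on a compact face and $v$ gives strict support; and if $v\in\partial C^{*}$ then $\langle v,d\rangle=0$ for some nonzero $d\in C$, so any nonempty $F_v$ contains a ray $x+\R_{\ge0}d$ and is unbounded. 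Hence $\mathscr S=\rho\big((\operatorname{int}C^{*}\cup-\operatorname{int}C^{*})\cap S^1\big)$, where $\rho\colon S^1\to\RP^1$ is the antipodal double cover, while every non-strict supporting normal lies on $\partial C^{*}$.

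Openness and connectivity then follow from planar convex-cone geometry. As $C^{*}$ is a convex cone, $\operatorname{int}C^{*}\cap S^1$ is an open arc; its union with the antipodal arc is open and $\rho$ is an open map, so $\mathscr S$ is open, and the two antipodal arcs descend to a single arc, so $\mathscr S$ is connected. Because $h_S$ is sublinear, $D$ is itself a single arc, whence $S^1\setminus(D\cup-D)$ is an antipodal pair of arcs and $\mathscr I=\rho\big(S^1\setminus(D\cup-D)\big)$ is connected. For $\mathscr N$: its normals lie on $\partial C^{*}$, which for a planar cone is at most two rays (and when $C^{*}$ is not full-dimensional, $D$ itself is at most two points), so $\mathscr N$ is contained in the image of at most two directions and is finite; and since those boundary rays are limits of rays in $\operatorname{int}C^{*}$, we get $\mathscr N\subseteq\partial\mathscr S$ whenever $\mathscr S\ne\emptyset$, which is the meaning of the parenthetical ``(if it exists).''

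Finally, $\mathscr S\ne\emptyset$ is equivalent to $\operatorname{int}C^{*}\ne\emptyset$, i.e.\ to $C^{*}$ being full-dimensional, i.e.\ to $C$ being pointed---equivalently, to $S$ containing no full line. I expect the boundary bookkeeping of the dictionary (attainment and compactness of $F_v$ at interior normals, and the possibly non-closed behavior of $D$ on $\partial C^{*}$) together with this nonemptiness to be the main obstacle. It is worth flagging that the sharp hypothesis is ``$S$ contains no line'' rather than ``$S$ is not a line'': the half-plane and the infinite strip contain lines and have $\mathscr S=\emptyset$, so in the intended application one wants to know in advance that the relevant components of $S_u$ are line-free (as the No Canyon Lemma guarantees away from $1$-parameter fibrations).
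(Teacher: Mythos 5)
Your proposal is correct, and it takes a genuinely different route from the paper. The paper's proof is a short case analysis: if $S$ is bounded then $\mathscr{S} = \RP^1$; if $S$ is unbounded with empty interior (a ray or line) the claim is checked directly; and if $S$ is unbounded with nonempty interior, the boundary is parametrized by a curve $\alpha$ and $\mathscr{S} \cup \mathscr{N}$ is identified as the interval of $\RP^1$ bounded by the limits $R$ and $L$ of the one-sided derivatives of $\alpha$ at its two ends, with openness, connectivity, and finiteness of $\mathscr{N}$ left as ``straightforward to verify.'' You instead run everything through convex duality: $\mathscr{S} = \rho\big((\operatorname{int} C^{*} \cup -\operatorname{int} C^{*}) \cap S^1\big)$, the normals of non-strict supports lie on $\partial C^{*}$ (at most two unoriented directions, so $\mathscr{N}$ has at most two elements), and $\mathscr{I} = \rho\big(S^1 \setminus (D \cup -D)\big)$; the paper's $R$ and $L$ correspond to the extreme rays of your recession cone $C$, so the two arguments are dual descriptions of the same picture. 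What your version buys is uniform treatment of every degenerate case plus sharp counts, and that is exactly what lets you catch a genuine (if minor) flaw that the paper's sketch glosses over: the closing clause is false as literally stated, since a closed half-plane or a strip (or $\R^2$ itself) is not a line yet has $\mathscr{S} = \emptyset$; the sharp hypothesis is that $S$ contains no line, equivalently that $C$ is pointed, equivalently $\operatorname{int} C^{*} \neq \emptyset$. The paper's own argument degenerates precisely there: for a half-plane $R = L$ and the interval of supporting lines collapses to the single non-strict direction, while for a strip $\partial S$ is not even a connected curve, so the parametrization $\alpha$ does not exist as described. Since the lemma is invoked downstream via strict support lines of components of $S_u$ (e.g.\ in the proof of Theorem \ref{thm:main}(d)), your remark that line-freeness must be supplied by the ambient fibration argument is apt. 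One small slip that does not damage your proof: $D$ need not be a single arc --- for a strip, $\operatorname{dom} h_S$ is a line and $D$ is an antipodal pair of points --- but you only ever use the antipodally symmetric set $D \cup -D$, whose complement in $S^1$ is always empty, the full circle, or a pair of antipodal arcs, so your connectivity conclusion for $\mathscr{I}$ stands in all cases.
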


\begin{proof}  
If $S$ is bounded, then $\mathscr{S} = \RP^1$, and if $S$ is unbounded with empty interior (a line or ray), then the statement is easy to verify.  It remains to consider the case in which $S$ is unbounded with nonempty interior.  Let $\alpha(t)$ be a parametrization of the boundary of $S$.  The supporting lines may be viewed in relation to the left- and right-hand derivatives of $\alpha$.  In particular, the set of supporting lines $\mathscr{S} \cup \mathscr{N}$ is the interval of $\RP^1$ bounded by the unoriented line $R$ corresponding to the limit, as $t \to -\infty$, of the right-handed derivative of $\alpha$, and by the unoriented line $L$, corresponding to the limit, as $t \to \infty$, of the left-handed derivative of $\alpha$.  The remaining properties are straightforward to verify.
\end{proof}

Using the language above, we now give some intuition for the connectedness of $S_u$.  Suppose that $S$ is a connected component of $S_u$ and $m$ is a strict supporting line.  Then the plane $P$ spanned by $m$ and $u$ is disconnected by fibers, and then there is a (locally) well-defined, continuous parallel plane pushoff map $\gamma_{P,E}$.  When $S$ is bounded, this allows us to push off from $S$ in any direction, so that $u$ is an interior point of the image of the restriction of $V$ to a neighborhood of $S$ in $E$.  Thus continuity at infinity holds for $u$, $S_u = S$, and $-u \notin U$.

But to apply this logic, we must first show that $S$ is isolated; otherwise it is (a priori) possible that $S_u$ is dense around $S$, and $\gamma_{P,E}$ is the constant function $u$.  Assume this isolation, and suppose that $S_1$ and $S_2$ are two unbounded components.  If there exists a line $m$ which strictly supports $S_1$ and $S_2$, then we can push off as above.  Since the parallel plane pushoff map is monotone (Lemma \ref{lem:nocanyon}), the image must include the entire great circle $P \cap S^2$.  But at any intermediate direction $v$ on the great circle, $S_v$ is bounded, so continuity at infinity holds for $v$, hence $-v$ cannot be in the image of the Gauss map, a contradiction.

This section is devoted to making this idea rigorous, beginning with the isolation of any connected component of $S_u$.

\begin{lem}
\label{lem:isolated}
If $S$ is a connected component of $S_u$ in a non-$1$-parameter line fibration, then $S$ is isolated.  That is, for the neighborhood $E$ of $S$ (see Corollary \ref{cor:localppp}), $E \cap S_u = S$.
\end{lem}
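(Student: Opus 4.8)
The plan is to argue by contradiction: I assume $E \cap S_u \supsetneq S$ and extract a violation of the non-$1$-parameter hypothesis through the parallel plane pushoff. First I would pin down a witness to the failure of isolation. If $E \cap S_u \neq S$, pick $p \in (E \cap S_u) \setminus S$. Since $S$ is a connected component of $S_u$, the point $p$ lies in a \emph{different} component $S'$; because $S'$ is connected, avoids $A$ (its fibers have direction $u$, hence are not contained in $u^\perp$), and meets the component $E$ of $u^\perp - A$, we get $S' \subseteq E$. Both $S$ and $S'$ are closed convex by Lemma \ref{lem:convex1}, and by Lemma \ref{lem:discbyline} there is a line $\ell_0 \subset u^\perp$ separating $S$ from $S'$ with $\ell_0 \cap S_u = \emptyset$.

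Next I would run the pushoff along $\ell_0$. Let $P \in \Gr_2(3)$ be the plane containing $u$ with $P \cap u^\perp$ parallel to $\ell_0$, and parametrize the translates $P_t$ so that $P_{t_0} \cap u^\perp = \ell_0$. A single direction-$u$ fiber over a point of $S$ is a line lying in some translate $P_{t_S}$ and disconnecting it, so $P$ is disconnected by fibers; hence by Corollary \ref{cor:localppp} the oriented map $\gamma_{P,E}$ is defined and continuous, and by the No Canyon Lemma (Lemma \ref{lem:nocanyon}, using that the fibration is not $1$-parameter) it is monotone into the open semicircle of $P \cap S^2$ centered at $u$. Whenever $P_t \cap u^\perp$ meets $S$ (resp.\ $S'$), the interval $P_t \cap E$ contains a point of $S_u$, so $\gamma_{P,E}(t) = u$ there; thus $\gamma_{P,E}$ takes the value $u$ at some $t_S$ (with $P_{t_S}$ meeting $S$) and at some $t_{S'}$ (with $P_{t_{S'}}$ meeting $S'$). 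Because $\ell_0$ separates $S$ from $S'$, the parameter $t_0$ lies strictly between $t_S$ and $t_{S'}$. A monotone map equal to the interior value $u$ at $t_S$ and at $t_{S'}$ must be constant and equal to $u$ on the whole interval between them; in particular $\gamma_{P,E}(t_0) = u$. But $\gamma_{P,E}(t_0) = u$ forces $P_{t_0}$ to contain a direction-$u$ fiber meeting $\ell_0 \cap E$, and the foot of that fiber lies in $\ell_0 \cap S_u$ --- contradicting $\ell_0 \cap S_u = \emptyset$. This yields $E \cap S_u = S$.

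The step I expect to demand the most care is not the soft ``monotone plus equal implies constant'' deduction but the verification that the pushoff machinery applies cleanly to the chosen $P$. The delicate configurations are those in which $S$ or $S'$ is itself an unbounded line: then $\ell_0$ is forced to be parallel to that line, and the relevant translate $P_t$ can be \emph{entirely} foliated by parallel fibers, so one must check that $P$ still qualifies as disconnected by fibers and that $\gamma_{P,E}(t_0)$ is indeed the value delivered by the monotone map. I would dispatch these cases by choosing the separating line transverse to whichever of $S$, $S'$ is bounded (so the intermediate translates are cut by a single bounded fiber, which manifestly disconnects), or, when both components are unbounded lines, by arguing directly that the resulting pair of fully foliated parallel planes traps a slab in which no fiber can cross either plane, whence every fiber in the slab is parallel to $P$; a short additional argument then promotes this to $1$-parameter behavior, contradicting the hypothesis.
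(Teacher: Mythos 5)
Your main line of attack---take the separating line $\ell_0$ supplied by Lemma \ref{lem:discbyline}, push off along $P=\Span\{u,m\}$ with $m$ parallel to $\ell_0$, and use monotonicity (Lemma \ref{lem:nocanyon}) to force $\gamma_{P,E}\equiv u$ between the two components, contradicting $\ell_0\cap S_u=\emptyset$---is essentially the paper's first case, and your ``monotone and equal to $u$ at both ends, hence constant'' refinement of that step is fine. The gap is in the sentence where you justify applying the machinery: ``a single direction-$u$ fiber over a point of $S$ is a line lying in some translate $P_{t_S}$ and disconnecting it, so $P$ is disconnected by fibers.'' In this paper, \emph{disconnected by fibers} does not mean that some fiber contained in a translate $P_t$ topologically separates $P_t$ (every line in a plane does that); it means that $P_t - A_t$ is disconnected, where $A_t$ is the union of \emph{all} fibers contained in $P_t$. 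This is forced by the paper's own example immediately after the definition: a plane containing a half-plane of parallel fibers and no other fibers is \emph{not} disconnected by fibers, even though each individual fiber separates the plane. Equivalently---and this is exactly how the hypothesis enters the proof of Lemma \ref{lem:ppp}---the trace of $A_t$ on a transversal line must have a bounded connected component. So the existence of one direction-$u$ fiber in a translate of $P$ does not license Corollary \ref{cor:localppp}, and your invocation of the pushoff for an \emph{arbitrary} separating direction is unjustified.

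This is not a removable technicality, because it fails precisely in the configurations your fallback paragraph does not cover. You anticipate trouble only when $S$ or $S'$ is a literal line, but non-strict support in the sense of Lemma \ref{lem:support} occurs for any unbounded convex component asymptotic to the separating direction: rays, half-strips, hyperbola-branch-type regions. Concretely, take $S=\{(x,y): y\ge 1+e^x\}$ and $S'=\{(x,y): y\le -1-e^x\}$ in $u^\perp$: the only separating direction is the horizontal $m$, which supports both components only non-strictly, and every horizontal line meets $S\cup S'$ in a ray or the empty set, so no translate of $P=\Span\{u,m\}$ has disconnected complement $P_t-A_t$; Corollary \ref{cor:localppp} gives you nothing, and even the well-definedness of $\gamma_{P,E}$ (a fiber in each $P_t$ meeting $P_t\cap E$) is unclear. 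This is exactly why the paper partitions the separating directions $\mathscr{L}$ into strict supports $\mathscr{S}_i$ and non-strict supports $\mathscr{N}_i$ and spends its Cases 1 and 2 on $\mathscr{L}\subset\mathscr{N}_i$: there it shows that every fiber over $\conv(S\cup S')$ projects to $u^\perp$ parallel to a separating direction, and then either reads off $1$-parameter behavior directly or constructs a continuous pushoff by hand even though $P$ is not disconnected by fibers. Your proof needs an argument of this kind; as written it establishes isolation only when some separating direction strictly supports one of the two components.
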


\begin{proof}
Suppose for contradiction that $S_1$ and $S_2$ are two connected components of $S_u$ which both define the same neighborhood $E \subset u^\perp$.  In the language developed above, the choice of $E$ ensures that $\gamma_{P,E}$ is defined and continuous for any plane $P$ such that $P \cap u^\perp$ strictly supports either $S_1$ or $S_2$.  Let $\mathscr{L} \subset \RP^1$ be the set of lines such that some translate separates $S_1$ from $S_2$ and does not intersect $S_u$.  By Lemma \ref{lem:discbyline}, $\mathscr{L}$ is nonempty. Moreover, for each $i = 1,2$, $\mathscr{L} \subset \mathscr{S}_i \cup \mathscr{N}_i$; the subscript $i$ indicates that a set is defined with respect to the closed convex subset $S_i$.

If there exists $m \in \mathscr{S}_i \cap \mathscr{L}$ for either $i$, then $m$ is a strict support line for $S_i$.  Thus for $P = \operatorname{Span}\left\{u,m\right\}$, $\gamma_{P,E}$ is well-defined and continuous.  Moreover, since $m \in \mathscr{L}$, $\gamma_{P,E}$ takes at least two distinct values ($u$ and some other value on the translate of $P$ containing the copy of $m$ which separates $S_1$ from $S_2$), contradicting the monotonicity statement of Lemma \ref{lem:nocanyon}.

The remaining possibility is that $\mathscr{L} \subset \mathscr{N}_i$ for both $i$, so $\mathscr{L}$ is either one point or two points.

\emph{Case 1}. Suppose that $\mathscr{L}$ consists of a single point.
 Then $m$ non-strictly supports both $S_1$ and $S_2$ (so that each set looks asymptotically like some translate of $m$), and there exists some translate $\mathfrak{m}$ of $m$ which separates $S_1$ and $S_2$ and does not intersect $S_u$.

Let $p \in \conv(S_1 \cup S_2)$.  Then every line through $p$, except the line parallel to $\mathfrak m$, intersects $S_u$, since $\mathscr{L} = \left\{ m \right\}$.  Thus $p$ must have fiber direction whose projection onto $u^\perp$ is parallel to $m$ (or equal to zero); since otherwise the fiber through $p$ would intersect $V^{-1}(u)$.

Suppose that $\mathfrak{m} \subset \conv(S_1 \cup S_2)$; this occurs, for example, if $S_1$ and $S_2$ diverge in ``opposite directions'' (with respect to $m$).  Then every point of $\mathfrak m$ has fiber contained in the affine copy of $P$ containing $\mathfrak m$, and the fibration is $1$-parameter.  If otherwise $S_1$ and $S_2$ diverge in the same direction (with respect to $m$), then $\conv(S_1 \cup S_2)$ contains a union of rays with direction $m$, all of whose fibers have direction which projects parallel to $m$.  Thus there is a well-defined and continuous parallel pushoff map with respect to $P$ (even though $P$ is not necessarily disconnected by fibers), and $\gamma_{P,E}$ takes at least two distinct values ($u$ and some other value on the translate of $P$ containing $\mathfrak m$), contradicting the monotonicity statement of Lemma \ref{lem:nocanyon}.

\emph{Case 2}.  Suppose that $\mathscr{L}$ consists of two points $m_1$ and $m_2$, then $\mathscr{L} = \mathscr{N}_1 = \mathscr{N}_2$.  Then every line through every point of $\conv(S_1 \cup S_2)$ has fiber direction whose projection onto $u^\perp$ is parallel to either $m_1$ or $m_2$ (or equal to zero).  Consider the copy $\mathfrak{m}_1$ of $m_1$ which separates $S_1$ and $S_2$.

We claim that every point of $\mathfrak{m}_1 \cap \conv(S_1 \cup S_2)$ has fiber contained in the affine copy of $P$ containing $\mathfrak{m}_1$.  Indeed, the projection of any fiber to $ u^\perp$ cannot be zero, since $\mathfrak{m}_1$ is disjoint from $S_{\pm u}$.  Moreover, the intersection is either a line or ray, either of which is connected, and so the projection cannot ``switch'' from parallel-to-$m_1$ to parallel-to-$m_2$, by continuity.  Thus we are in the situation of Case 1, and the proof is complete.
\end{proof}

\begin{lem}
\label{lem:strictdisc}
If $S_1$ and $S_2$ are any two distinct connected components of $S_u$ in some non-$1$-parameter fibration, then there exists a line separating $S_1$ and $S_2$ which does not intersect $S_u$ and strictly supports $S_1$ and $S_2$.
\end{lem}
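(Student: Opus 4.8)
The plan is to study the set $\mathscr{L} \subseteq \RP^1$ of directions $m$ for which some translate separates $S_1$ from $S_2$ and misses $S_u$; by Lemma \ref{lem:discbyline} this set is nonempty. Any line separating $S_1$ and $S_2$ supports each of them (translate it toward either set until it first meets the boundary), so $\mathscr{L} \subseteq (\mathscr{S}_1 \cup \mathscr{N}_1) \cap (\mathscr{S}_2 \cup \mathscr{N}_2)$, where the subscript $i$ refers to the decomposition of $\RP^1$ provided by Lemma \ref{lem:support} for the convex set $S_i$. The lemma follows once I produce a direction $m \in \mathscr{L} \cap \mathscr{S}_1 \cap \mathscr{S}_2$: a separating, $S_u$-missing translate of such an $m$ is a line of the desired type, since its direction then strictly supports both components. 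I would argue by contradiction, assuming $\mathscr{L} \cap \mathscr{S}_1 \cap \mathscr{S}_2 = \emptyset$. Since each $\mathscr{N}_i$ is finite, a direction $m \in \mathscr{L}$ that lies in neither $\mathscr{N}_1$ nor $\mathscr{N}_2$ would automatically lie in $\mathscr{S}_1 \cap \mathscr{S}_2$; hence the assumption forces $\mathscr{L} \subseteq \mathscr{N}_1 \cup \mathscr{N}_2$, a finite set, each of whose elements is non-strict for at least one of $S_1, S_2$.

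First I would dispose of the case in which every element of $\mathscr{L}$ is non-strict for \emph{both} components, i.e.\ $\mathscr{L} \subseteq \mathscr{N}_1 \cap \mathscr{N}_2$. Then $\mathscr{L}$ consists of one or two points, and the configuration is exactly the one treated in Cases 1 and 2 of the proof of Lemma \ref{lem:isolated}. Those arguments use only that $\mathscr{L}$ is this small and that the points of $\conv(S_1 \cup S_2)$ are thereby forced to carry fibers whose projection to $u^\perp$ is parallel to the finitely many asymptotic directions; they do not use the hypothesis that $S_1$ and $S_2$ share a neighborhood. They conclude that the fibration is $1$-parameter, contradicting the standing hypothesis. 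I would reuse that analysis here, after verifying that the shared-neighborhood assumption played no essential role.

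The crux is the remaining \emph{mixed} case, in which some $m \in \mathscr{L}$ is strict for one component and non-strict for the other, say $m \in \mathscr{S}_1 \cap \mathscr{N}_2$ (the other case is symmetric). Let $E_1, E_2$ be the neighborhoods of $S_1, S_2$ furnished by Corollary \ref{cor:localppp}. Since $m$ strictly supports $S_1$, the plane $P = \Span\{u, m\}$ is disconnected by fibers, so $\gamma_{P,E_1}$ is defined and continuous by Corollary \ref{cor:localppp} and monotone by the No Canyon Lemma \ref{lem:nocanyon}, with $\gamma_{P,E_1} = u$ on the translate of $P$ through $S_1$. On the other hand, $m \in \mathscr{N}_2$ means $S_2$ recedes in the direction $m$, so $S_2$ lies in a slab of direction $m$ beyond the separating translate. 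I would then push off in the plane family $P_t$ from $S_1$ toward $S_2$ and argue, combining monotonicity of the pushoff with the fact that $m$ is an asymptotic direction of $S_2$, that the fibers over $\conv(S_1 \cup S_2)$ are again forced to project parallel to $m$, producing a ruled $1$-parameter region and the same contradiction. Carrying this out cleanly—in particular controlling the pushoff as the sweeping plane passes from $E_1$ to $E_2$, which by Lemma \ref{lem:isolated} is a genuinely different component of $u^\perp - A$—is the main obstacle, and is where the bookkeeping of the parallel plane pushoff must be done most carefully.

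Having ruled out both the doubly-non-strict case and the mixed case, the contradiction is complete, so $\mathscr{L} \cap \mathscr{S}_1 \cap \mathscr{S}_2 \neq \emptyset$, and any separating translate of such a direction that misses $S_u$ is the required line.
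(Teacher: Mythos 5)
Your reduction to the set $\mathscr{L}$ and the trichotomy via Lemma \ref{lem:support} is a reasonable framing, and you correctly identify the mixed case $m \in \mathscr{L} \cap \mathscr{S}_1 \cap \mathscr{N}_2$ as the crux --- but that is exactly the step you do not prove, and your own text concedes it (``the main obstacle''). The sketch you give for it is also the wrong shape. Since $m \in \mathscr{L}$, the separating translate of $m$ lies strictly between $S_1$ and $S_2$, so pushing off from $S_1$ does not immediately meet $S_2$; you would have to sweep past the non-strict support line of $S_2$, where translates of $m$ meet $S_2$ only in points escaping to infinity, and where $\gamma_{P,E_1}$ --- which Corollary \ref{cor:localppp} defines and controls only relative to the fixed component $E_1$ --- says nothing about fibers near $S_2 \subset E_2$. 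There is no machinery in the paper for ``passing the pushoff from $E_1$ to $E_2$,'' and the Case-1/Case-2 mechanism you hope to imitate (forcing all fibers over $\conv(S_1 \cup S_2)$ to project parallel to $m$) relies on $\mathscr{L}$ consisting only of non-strict directions for \emph{both} sets; it is not available once $m$ strictly supports $S_1$. So as written the argument has a genuine hole at its decisive point. (Your doubly-non-strict case, by contrast, is plausibly recoverable from Cases 1--2 of Lemma \ref{lem:isolated}, modulo reinterpreting the role of $E$ there.)

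For comparison, the paper avoids your case analysis entirely and kills the bad configuration \emph{locally}. It perturbs the separating direction: by Lemma \ref{lem:support} ($\mathscr{S}_i$ open, $\mathscr{N}_i$ finite), either $m$ perturbs to $m'$ strictly supporting both $S_1$ and $S_2$ (done), or it perturbs to $m'$ strictly supporting $S_1$ whose translate through $\partial S_1$ already cuts into the interior of $S_2$. In the latter case an \emph{infinitesimal} pushoff from $S_1$ along $P = \Span\{u, m'\}$ suffices: isolation (Lemma \ref{lem:isolated}) forces $\gamma_{P,E}(\varepsilon) \neq u$, yet $P_\varepsilon$ still meets $S_2 \times \R$, and a fiber in $P_\varepsilon$ with direction different from $u$ is a line that must cross that vertical cylinder, contradicting uniqueness of the fiber through a point --- no monotonicity bookkeeping, no long sweep, no derivation of a $1$-parameter region. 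If you want to salvage your mixed case directly, the same mechanism rescues it: isolation gives $E_1 \cap S_u = S_1$, so $S_2 \not\subset E_1$, hence $A \neq \emptyset$ and $V(E_1)$ lies in the \emph{open} hemisphere centered at $u$; for any $t$ with $P_t \cap S_2 \neq \emptyset$ (such $t$ exist since $m \in \mathscr{N}_2$), the fiber of $P_t$ through $P_t \cap E_1$ has direction in the open semicircle, hence nonzero $m$-component, hence crosses the strip $(P_t \cap S_2) \times \R \subset V^{-1}(u)$ --- the desired contradiction, obtained without ever tracking the pushoff across components.
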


\begin{proof}
The proof is similar to the proof of Lemma \ref{lem:isolated}.  First we choose a separating line $m$ which may fail to strictly support either $S_1$ or $S_2$.  Then by Lemma \ref{lem:support}, either $m$ can be perturbed to an $m'$ which strictly supports both, or $m$ can be perturbed to $m'$ which strictly supports $S_1$ and such that the translate of $m'$ which intersects $\partial S_1$ intersects the interior of $S_2$.  In this latter case, we push off from $S_1$ with respect to the plane $P$ spanned by $u$ and $m'$.  By Lemma \ref{lem:isolated}, $S_1$ is isolated, so $\gamma_{P,E}(\varepsilon) \neq u$, but this means that the fiber in $P_\varepsilon$ will intersect $S_2 \times \R$.
\end{proof}

\begin{lem}
\label{lem:continffinite}
Suppose that for some $u \in U$, $S_u$ is compact.  Then continuity at infinity holds for the direction $u$, $S_u = S$, and $-u \notin U$.
\end{lem}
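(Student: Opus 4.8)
The plan is to establish the three conclusions in the order: connectivity $S_u = S$, then an ``interior point'' statement for the restriction of $V$ near $S$, and finally continuity at infinity, from which $-u \notin U$ is immediate. First note that since $u \in U$ the set $S_u = V^{-1}(u) \cap u^\perp$ is nonempty, because every fiber of direction $u$ meets $u^\perp$ in exactly one point; and any $1$-parameter fibration has each $S_u$ equal to a union of lines, hence noncompact. So the hypothesis that $S_u$ is compact forces the fibration to be non-$1$-parameter, and the machinery of Lemmas \ref{lem:isolated}, \ref{lem:strictdisc}, and \ref{lem:nocanyon} becomes available.

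For connectivity I would argue by contradiction. If $S_u$ had a second connected component $S'$ besides $S$, then Lemma \ref{lem:strictdisc} produces a line $m$ that strictly supports both $S$ and $S'$ and separates them without meeting $S_u$. Setting $P = \Span\{u,m\}$, this plane is disconnected by fibers, so $\gamma_{P,E}$ is defined and continuous (Corollary \ref{cor:localppp}) and monotone (Lemma \ref{lem:nocanyon}). But pushing off from $S$ toward $S'$, the map must leave the value $u$ at the separating translate (whose fiber has direction different from $u$) and return to $u$ upon reaching $S' \times \R$, exactly as in the proof of Lemma \ref{lem:strictdisc}; this contradicts monotonicity. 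Hence $S_u = S$.

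Next I would push off from $S$ in every direction. Because $S$ is compact it is bounded, so by Lemma \ref{lem:support} every $m \in \RP^1$ strictly supports $S$; thus for each $m$ the plane $P = \Span\{u,m\}$ is disconnected by fibers, and $\gamma_{P,E}$ is monotone, equal to $u$ exactly on the bounded parameter interval whose translates meet $S$, and strictly to either side of $u$ just outside that interval. By the diagonal refinement of Lemma \ref{lem:ppp} recorded immediately after its proof, each such nearby direction is realized by $V$ at points of $E$ converging to $S$. Letting $m$ sweep $\RP^1$, these small arcs through $u$ fill a neighborhood of $u$ in $S^2$, so $V$ carries every neighborhood $N$ of $S$ onto a set whose interior contains $u$; and since $V$ is continuous with $V^{-1}(u) \cap u^\perp = S$ compact, shrinking $N$ forces $V(N)$ into any prescribed ball about $u$.

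It remains to prove continuity at infinity, which I expect to be the main obstacle. Suppose it fails: there is a sequence $p_n$ with $|p_n| \to \infty$, $p_n/|p_n| \to \pm u$, and, after passing to a subsequence, $V(p_n) \to w \neq u$. The fiber through $p_n$ has direction $V(p_n)$ and meets $u^\perp$ in a point $q_n$ with $V(q_n) = V(p_n) \to w$; the constraint $p_n/|p_n| \to \pm u$ forces $q_n \to \infty$ with escape direction antiparallel to the $u^\perp$-projection of $w$, so that the fiber's horizontal displacement runs back toward $S$. I would then feed this into the monotone pushoff in the vertical plane adapted to $w$, using compactness and isolation of $S$ (Lemma \ref{lem:isolated}) to force either a plane containing two nonparallel fibers or a reversal of $\gamma_{P,E}$, a contradiction; the cases $w = -u$ and $w$ on the equator of $u$ are treated directly. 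The delicate point---the heart of the lemma---is controlling the perpendicular offset of the diverging points $q_n$ from the adapted plane, since $|q_n| \to \infty$ while the relevant angle tends to zero, so that local non-intersection alone does not close the argument and the global monotonicity is genuinely needed. Once continuity at infinity is established, $-u \notin U$ follows at once, as noted preceding Lemma \ref{lem:continfskew}: a fiber of direction $-u$ would supply a diverging sequence with $p_n/|p_n| \to -u$ yet $V(p_n) = -u \neq u$.
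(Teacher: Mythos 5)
Your first two reductions are sound and match the paper's setup: compactness of $S_u$ rules out the $1$-parameter case, every $m \in \RP^1$ strictly supports $S$, and so $\gamma_{P,E}$ is defined, continuous, and monotone for every plane $P$ containing $u$, equal to $u$ exactly on the translates meeting $S$ (by Lemma \ref{lem:isolated}). But the lemma's central conclusion --- continuity at infinity --- is never actually proved in your proposal: your final paragraph is an announced plan (``I would then feed this into the monotone pushoff\dots'') whose crux you yourself flag as unresolved, namely controlling the perpendicular offset of the diverging base points $q_n$. Those $q_n$ escape any fixed pushoff family based at $E$, and it is not clear your dichotomy (two nonparallel fibers in one plane, or a reversal of $\gamma_{P,E}$) can be forced for them. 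The paper closes this step by reduction rather than by a new contradiction argument: once $u$ is an interior point of $V(E)$ (which you do establish, modulo the same uniformity-in-$m$ gloss the paper allows itself), the proof of Lemma \ref{lem:continfskew} from \cite{Harrison} applies unchanged, so the fibers emanating from a small neighborhood of $S$ in $E$ swallow every sufficiently far double cone about $\pm u$. This swallowing does all the remaining work at once: it gives continuity at infinity, and any fiber of direction $\pm u$ outside $E$ would have to enter such a cone and hence collide with a fiber through $E$; combined with $E \cap S_u = S$ from Lemma \ref{lem:isolated}, this yields $S_u = S$ and $-u \notin U$ simultaneously. You need either to import that argument or to supply the missing estimate; as written, the heart of the lemma is absent.

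Your connectivity step is also misstated, though repairably. By Lemma \ref{lem:isolated}, a second component $S'$ satisfies $S' \cap E = \emptyset$ --- that is precisely what isolation says --- so when the pushoff planes reach $S' \times \R$, the value $\gamma_{P,E}(t')$ is read off a fiber meeting $P_{t'} \cap E$, not off the $u$-direction fiber through $S'$; your claim that the map ``must return to $u$ upon reaching $S' \times \R$'' is unjustified and in general false. The correct dichotomy is: either $\gamma_{P,E}(t') = u$, contradicting monotonicity, or $\gamma_{P,E}(t') \neq u$, in which case $P_{t'}$ contains two nonparallel fibers, which must intersect --- the mechanism actually used at the end of the proofs of Lemmas \ref{lem:strictdisc} and \ref{lem:connected}. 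Moreover, you must ensure $P_t \cap E \neq \emptyset$ along the entire pushoff, i.e., that the separating line can be chosen not parallel to $\partial E$; compactness of $S_u$ does provide an open set of admissible separating directions (perturb the boundary fiber of $E$ slightly, using the positive distance from $S_u$), but this needs to be said --- the second case in the paper's proof of Lemma \ref{lem:connected} exists precisely because of this issue. Note that the paper's own proof of Lemma \ref{lem:continffinite} avoids all of this by obtaining $S_u = S$ as a corollary of the swallowing argument rather than proving connectivity first.
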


\begin{proof}
Because $S_u$ is bounded, every line $m \in \RP^1$ strictly supports $S_u$, and the parallel plane pushoff $\gamma_{P,E}$ is well-defined for every $P$ containing $u$.  By Lemma \ref{lem:isolated}, $\gamma_{P,E}$ is nonconstant as we push off from $S_u$ with respect to $P$, and Lemma \ref{lem:nocanyon} ensures that $u$ is an interior point of $V(E)$. The idea of Lemma \ref{lem:continfskew} goes through unchanged, so continuity at infinity holds for $u$.  In particular, the fibers emanating from $E$ swallow any sufficiently far double-cone as depicted in Figure \ref{fig:contatinf}, hence there may be no fibers outside $E$ with direction $\pm u$, since any such fiber would intersect such a cone.
\end{proof}

\begin{lem}
\label{lem:connected}
For any $u \in U$, the set $S_u$ is either connected or is a union of parallel lines.
\end{lem}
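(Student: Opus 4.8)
The plan is to dispose of the connected case trivially and to show that a \emph{disconnected} $S_u$ forces the fibration to be $1$-parameter, in which case $S_u$ is manifestly a union of parallel lines. Thus the real content is the implication: if the fibration is not $1$-parameter, then $S_u$ is connected. The easy half is direct. If the fibration is $1$-parameter (Definition \ref{def:oneparam}), with $\R^3$ foliated by parallel planes $\{P_t\}$ each fibered by parallel lines of oriented direction $\gamma(t)$, then $V^{-1}(u) = \bigcup_{\gamma(t)=u} P_t$ is a union of full planes, so $S_u = \bigcup_{\gamma(t)=u}(P_t \cap u^\perp)$. Each $P_t$ with $\gamma(t)=u$ contains the direction $u$ while $u^\perp$ is orthogonal to $u$, so each $P_t \cap u^\perp$ is a line; as the $P_t$ are mutually parallel these lines are all parallel (translates of $P_0 \cap u^\perp$). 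Hence $S_u$ is a union of parallel lines (including the degenerate case $\gamma \equiv u$ on an interval, where the union fills a strip and is in fact connected).

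For the hard half I would assume the fibration is not $1$-parameter and, for contradiction, that $S_u$ has two distinct components $S_1, S_2$, each convex by Lemma \ref{lem:convex1}. The engine is the monotone, continuous parallel plane pushoff $\gamma_{P,E}$ of Corollary \ref{cor:localppp} and Lemma \ref{lem:nocanyon}. First I would reduce to the case that every component is unbounded. If, say, $S_1$ is compact, then every line of $\RP^1$ strictly supports it, so $\gamma_{P,E_1}$ is defined for every plane $P$ containing $u$; it is nonconstant by the isolation of $S_1$ (Lemma \ref{lem:isolated}) and monotone by Lemma \ref{lem:nocanyon}, forcing $u$ into the interior of $V(E_1)$. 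As in Lemma \ref{lem:continffinite} (whose argument is purely local near $S_1$), continuity at infinity then holds for $u$, and the fibers emanating from $E_1$ swallow every sufficiently distant double-cone around $\pm u$. But a point of $S_2$ lies on a fiber of direction $u$ outside $E_1$ (since $E_1 \cap S_u = S_1$), and running out to infinity along that fiber enters such a cone, forcing it to coincide with a fiber from $E_1$ — a contradiction. Hence both components are unbounded.

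It then remains to rule out two unbounded components. By Lemma \ref{lem:strictdisc} I would choose a line $m$ that strictly supports both $S_1$ and $S_2$, separates them, and misses $S_u$, and set $P = \Span\{u,m\}$. Parametrizing the translates $P_t$ by signed height in the direction perpendicular to $P$ inside $u^\perp$, I expect to show that $\gamma_{P,E}$ equals $u$ on the heights meeting $S_1$, takes a value different from $u$ at the separating height (there the relevant fiber meets the translate of $m$, which avoids $S_u$, so by isolation its direction cannot be $u$), and is driven back toward $u$ by the second unbounded component on the far side; combined with monotonicity into the open great semicircle centered at $u$, this traps $\gamma_{P,E}$ into leaving $u$ and returning, which is impossible. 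Equivalently, following the intuition preceding Lemma \ref{lem:isolated}, monotonicity forces the image of the pushoff to sweep through intermediate directions $v$ whose base spaces $S_v$ are compact, so continuity at infinity holds for each (giving $-v \notin U$), while the two $u$-valued components force an antipodal pair into the image, a contradiction.

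The main obstacle is exactly this final step. The difficulty is that, by Lemma \ref{lem:isolated}, distinct components of $S_u$ must lie in distinct connected components of $u^\perp - A$, where $A$ is the (therefore nonempty) set of points whose fibers lie inside $u^\perp$; since any two such in-plane fibers are disjoint, $A$ is a union of \emph{parallel} lines that separates $S_1$ from $S_2$. Consequently $S_2$ need not sit in the neighborhood $E = E_1$ on which $\gamma_{P,E}$ is built, so one must select the support direction $m$ transverse to the direction of $A$ (to keep $\gamma_{P,E}$ globally defined) and control the pushoff as it crosses the separating in-plane fibers. Making the monotonicity contradiction rigorous in the presence of this separating set $A$ is the technical heart of the argument; everything else reduces to the convexity, isolation, separation, and pushoff facts already established.
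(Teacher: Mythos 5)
Your overall architecture coincides with the paper's: the easy direction for $1$-parameter fibrations, the use of Lemmas \ref{lem:convex1}, \ref{lem:isolated}, and \ref{lem:strictdisc}, the observation that distinct components of $S_u$ lie in distinct components of $u^\perp - A$ (so that $A$ is a nonempty union of parallel in-plane fibers separating $S_1$ from $S_2$), and the intended endgame in which a monotone pushoff sweeps the great circle $P \cap S^2$ and is killed by Lemma \ref{lem:continffinite} via an antipodal pair. (Your preliminary reduction to two \emph{unbounded} components is legitimate but redundant: the conclusion $S_u = S$ in Lemma \ref{lem:continffinite} already rules out a disconnected $S_u$ with a compact component.) However, the proof is not complete: you explicitly defer the decisive step (``I expect to show\dots'', ``the technical heart''), and your two concrete suggestions for it do not work as stated. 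First, you propose to select the support direction $m$ \emph{transverse} to the direction of $A$ so as to keep $\gamma_{P,E}$ globally defined; nothing guarantees such a choice exists. Lemma \ref{lem:strictdisc} produces \emph{some} strictly separating line, and when $S_1$ and $S_2$ are unbounded, the only strictly separating support directions may be parallel to $A$. Second, your claim that monotonicity makes ``leaving $u$ and returning'' impossible is false for a circle-valued map: a monotone $\gamma_{P,E}$ can return to $u$ by traversing the whole circle, and in the parallel case that is exactly what happens; the contradiction is not monotonicity per se, but the antipodal argument that you mention only as an ``equivalent'' afterthought.

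For comparison, the paper splits on whether $m$ is parallel to the lines of $\partial E$ and treats each branch with a different mechanism. If $m$ is not parallel to $\partial E$, no monotonicity is needed at all: every translate $P_t \cap u^\perp$ meets $E$, so $P_t$ contains a fiber emanating from $E$, whose direction is not $u$ once $P_t \cap S_1 = \emptyset$ (by isolation) and not $-u$ (since $V(E)$ lies in the open hemisphere centered at $u$); but the translate that strictly supports $S_2$ contains a fiber of direction $u$ through a boundary point of $S_2$, and a plane cannot contain two nonparallel fibers --- an immediate contradiction. If instead every admissible $m$ is parallel to $\partial E$, the paper chooses $m$ to be an \emph{actual fiber} of $A$ and continues the pushoff across it by re-applying the No Canyon Lemma \ref{lem:nocanyon} recentered at the direction of $m$; monotone rotation then forces the image to traverse the entire great circle before returning to $u$ at $S_2$, whence an intermediate direction $v$ has compact base, so $-v \notin U$ by Lemma \ref{lem:continffinite}, contradicting $-v \in P \cap S^2 \subset U$. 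This recentering of the monotonicity lemma at the in-plane fiber is precisely the crossing mechanism your proposal names but does not supply, so the gap sits exactly at the one genuinely new idea in the paper's argument.
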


\begin{proof}
If $S_u$ does not contain a parallel line, then we are not in the $1$-parameter situation, and Lemma \ref{lem:strictdisc} applies.  In particular, we assume that there exist connected components $S_1$ and $S_2$ of $S_u$, with corresponding $E_1$ and $E_2$, and there exists a line $m$ which disconnects $S_1$ and $S_2$, strictly supports $S_1$ and $S_2$, and does not intersect $S_u$.

Suppose that $m$ is not parallel to a line in $\partial E$.  Then for every $t$, $P_t \cap E$ is not empty and there exists a fiber emanating from it which is contained in $P_t$.  Moreover, if $t$ is such that $P_t \cap S_1 = \emptyset$, then the fiber direction is not equal to $u$.  However, there exists a $t$ such that $P_t \cap u^\perp$ is a strict supporting line for $S_2$, hence $P_t$ contains a fiber with direction $u$, a contradiction.

Suppose otherwise that $m$ may be chosen as a line in $\partial E$, so that $m$ is an actual fiber.  Let $P = \operatorname{Span}\left\{m,u\right\}$ and consider the parallel plane pushoff $\gamma_{P,E_1}$, starting at $S_1$ and pushing towards $m$.  The image traces out an arc of the great circle $P \cap S^2$ which starts at $u$ and continues rotating monotonically (by Lemma \ref{lem:nocanyon}) until arriving at the fiber $m$, after which the rotation must continue (by applying Lemma \ref{lem:nocanyon} to the direction of $m$) until eventually reaching $u$ again as the pushoff planes hit $S_2$.  Hence the great circle $c = P \subset S^2$ is inside $U$.  But for a point $v \in c$, such that $v \neq \pm u$ and $v$ not parallel to $m$, $V^{-1}(v) \cap u^\perp$ must contain a bounded component, since $P$ is disconnected by fibers, hence must be bounded, by Lemma \ref{lem:continffinite}.  Again by Lemma \ref{lem:continffinite}, $-v \notin U$, a contradiction.
\end{proof}

We are finally ready to prove Theorem \ref{thm:main}.

\begin{proof}[Proof of Theorem \ref{thm:main}]

The first item follows from Lemmas \ref{lem:convex1} and \ref{lem:connected}.  The second item is Lemma \ref{lem:continffinite}.  For the third item, the parallel plane pushoff is well-defined and continuous with respect to every plane $P \in \Gr_2(3)$, and so the proof Lemma \ref{lem:convex} goes through verbatim.  For the fourth item, if some $S_u$ is not a line, then there exists a strict supporting line, hence a plane which is disconnected by fibers, hence a well-defined parallel plane pushoff map and some $v$ in the image with compact $S_v$.

\end{proof}

\subsection{Additional results when there exist multiple noncompact base spaces}
\label{sec:additional}

The most mysterious of the line fibrations are those containing noncompact $S_u$ for multiple $u \in U$.  For example, consider a fibration which contains a fibered half-plane, such that the fibers have direction $u$ and $S_u$ is a ray with direction $m$.  Then $P = \Span\left\{u, m\right\}$ is not disconnected by fibers, and so it is possible that the parallel plane pushoff map is not defined.  Even when defined (for example, if the fibration contains a family of parallel fibered half-planes) we do not know if the map is continuous.  These difficulties have prevented us from classifying fibrations with multiple unbounded bases.  However, we offer a handful of results which we hope could contribute to proofs of Conjectures \ref{conj:one} and \ref{conj:two}.

\begin{lem}
\label{lem:antipodal}
Suppose that a fibration is not $1$-parameter and that exists an antipodal pair $\pm u \in U$.  Then $S_u$ and $S_{-u}$ must be unbounded and have the same set of strict supporting lines.  Moreover, given a representation of $S_u$ as the intersection of closed half-planes $H_\iota$, the set $S_{-u}$ must be contained in the intersection of $\bar{H_\iota^c}$, i.e.\ the closure of the complementary half-planes; and vice versa.
\end{lem}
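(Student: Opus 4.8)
The plan is to treat the two assertions separately, using the parallel plane pushoff throughout. I would record first that both sets live in the common plane $u^\perp = (-u)^\perp$, that each is closed and convex (Lemma \ref{lem:convex1} together with the connectedness Lemma \ref{lem:connected}, since the fibration is not $1$-parameter), and that they are disjoint, as a common point would carry the two opposite vertical fibers at once. Unboundedness is then immediate from Lemma \ref{lem:continffinite}: were $S_u$ compact we would get $-u \notin U$, against the hypothesis, and symmetrically for $S_{-u}$. I would also extract the structural fact that $S_u$ and $S_{-u}$ lie in distinct connected components $E_u,E_{-u}$ of $u^\perp - A$, where $A$ is the union of the in-plane fibers: since $V(E_u)$ lies in the open hemisphere centered at $u$ it cannot contain $-u$, so $S_{-u}\cap E_u=\emptyset$, and likewise $S_u\cap E_{-u}=\emptyset$. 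In particular $A$ is nonempty, and as its lines are pairwise disjoint fibers they are mutually parallel, so some in-plane fiber $L_0$ strictly separates $S_u$ from $S_{-u}$.

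For the equality of strict supporting lines I would show $\mathscr{S}_u \cap \mathscr{I}_{-u} = \emptyset$ (subscripts as in Lemma \ref{lem:support}). Fix $m \in \mathscr{S}_u$ and set $P = \Span\{u,m\}$, which is then disconnected by fibers; by Corollary \ref{cor:localppp} and Lemma \ref{lem:nocanyon} the map $\gamma_{P,E_u}$ is defined, continuous, monotone, and valued in the open great semicircle of $P\cap S^2$ centered at $u$. If $m$ lay in $\mathscr{I}_{-u}$, then every translate $P_t$ would contain a fiber of direction $-u$; as no plane contains two nonparallel fibers, every fiber lying in $P_t$ would be vertical, forcing $\gamma_{P,E_u}(t)\in\{u,-u\}$ and hence $\gamma_{P,E_u}(t)=u$ for all $t$ by the hemisphere constraint. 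This contradicts $\gamma_{P,E_u}(t)\neq u$ for $t$ beyond the strict supporting translate of $S_u$. Thus $\mathscr{S}_u\cap\mathscr{I}_{-u}=\emptyset$; since $\mathscr{S}_u$ is open and $\mathscr{N}_{-u}$ finite, $\mathscr{S}_u\subseteq\mathscr{S}_{-u}$, and the reverse inclusion follows by exchanging $u$ and $-u$.

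The engine for the second assertion is the antipodal pushoff identity: for $m\in\mathscr{S}_u=\mathscr{S}_{-u}$ and $P=\Span\{u,m\}$, whenever both $\gamma_{P,E_u}(t)$ and $\gamma_{P,E_{-u}}(t)$ are defined they are negatives of one another, since the two fibers they name both lie in $P_t$ (hence are parallel) while sitting in opposite hemispheres. Writing a supporting half-plane of $S_u$ as $H_a=\{\langle\cdot,a\rangle\le h_{S_u}(a)\}$ with outward normal $a$, the goal $S_{-u}\subseteq\overline{H_a^c}$ is the inequality $\min_{S_{-u}}\langle\cdot,a\rangle\ge\max_{S_u}\langle\cdot,a\rangle$, and by a limiting argument it suffices to treat $a$ with $m=a^\perp\in\mathscr{S}_u$. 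Pushing off along $P=\Span\{u,a^\perp\}$ and reading the level sets $\{\langle\cdot,a\rangle=t\}$, the identity converts the presence of a $-u$ fiber at level $t$ (a point of $S_{-u}$) into the presence of a $u$ fiber at that level (a point of $S_u$), and conversely; fed into the monotonicity of $\gamma_{P,E_u}$ and the fact that every intermediate direction $v$ produced by the pushoff has compact $S_v$, whence $-v\notin U$ (exactly as in the proof of Lemma \ref{lem:connected}), this is designed to pin the two sets to opposite sides of each strict supporting translate, which is the desired containment.

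The main obstacle is precisely the bookkeeping of the pushoff domains in this last step. The clean antipodal identity constrains only those levels $t$ at which both pushoffs are defined, and the two ranges of levels $\langle\cdot,a\rangle$ attained on $E_u$ and on $E_{-u}$ overlap fully only when $m=a^\perp$ is \emph{not} parallel to the common direction $w$ of the separating fibers $A$; when $m\parallel w$ the level lines run parallel to $A$, the domains can be disjoint, and the identity says nothing, so the sidedness in that single direction must instead be read off directly from the separating fiber $L_0$ and the coincidence of recession cones forced by $\mathscr{I}_u=\mathscr{I}_{-u}$. Promoting "opposite sides of $L_0$ in the direction $w$" to "opposite sides in every supporting direction", uniformly, and confirming that the finitely many non-strict directions in $\mathscr{N}$ really arise as limits of strict ones, is where I expect the genuine difficulty to concentrate.
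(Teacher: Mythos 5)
Your first assertion and your opening mechanism are sound and close to the paper's own: unboundedness is exactly the appeal to Lemma \ref{lem:continffinite}, and your pushoff argument --- a $-u$-fiber contained in $P_t$ forces every fiber contained in $P_t$ to be vertical, so $\gamma_{P,E_u}(t)=u$, contradicting strict support together with the isolation of Lemma \ref{lem:isolated} --- is a correct rendering of the paper's ``fibers in the push-off from $S_u$ would intersect $V^{-1}(-u)$.'' The genuine gap is the step ``since $\mathscr{S}_u$ is open and $\mathscr{N}_{-u}$ finite, $\mathscr{S}_u\subseteq\mathscr{S}_{-u}$.'' What you have actually proved is $\mathscr{S}_u\cap\mathscr{I}_{-u}=\emptyset$, i.e.\ $\mathscr{S}_u\subseteq\mathscr{S}_{-u}\cup\mathscr{N}_{-u}$, and topology eliminates a point $m\in\mathscr{S}_u\cap\mathscr{N}_{-u}$ only when $\mathscr{I}_{-u}\neq\emptyset$ (then lines near $m$ on the $\mathscr{I}$-side, still inside the open set $\mathscr{S}_u$, would lie in $\mathscr{S}_u\cap\mathscr{I}_{-u}$). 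But for unbounded convex $S_{-u}$, the case $\mathscr{I}_{-u}=\emptyset$ occurs precisely when $S_{-u}$ is a ray (see Lemma \ref{lem:support} and the examples preceding it), in which case $\mathscr{S}_{-u}=\RP^1-\{m_0\}$ and $\mathscr{N}_{-u}=\{m_0\}$ with $m_0$ the ray's direction, and nothing topological prevents $m_0\in\mathscr{S}_u$: two rays in different directions satisfy every constraint you have established. This case is live, not a corner case --- rays genuinely occur (the exotic fibration of Section \ref{sec:exotic} has $S_{\pm u}$ both rays), and the paper's subsequent lemma that two antipodal rays must be collinear is deduced \emph{from} the present lemma, so it cannot be invoked. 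Worse, the obvious repair of pushing off along $m_0$ can stall in exactly the way you flag later: if $m_0$ is parallel to the direction of the separating fibers $A$, the translates of $m_0$ beyond $\partial E_u$ never meet $E_u$ and $\gamma_{P,E_u}$ is undefined there. Excluding $m_0\in\mathscr{S}_u\cap\mathscr{N}_{-u}$ requires the non-strict-support machinery of Cases 1 and 2 in the proof of Lemma \ref{lem:isolated} --- convex-hull arguments forcing fiber directions to project parallel to $m_0$, which produce a pushoff that is well defined even when $P$ is not disconnected by fibers --- adapted to the pair $(S_u,S_{-u})$; this is what the paper's terse ``identical to the proof of Lemma \ref{lem:isolated}'' is importing, and it is absent from your proposal.

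For the half-plane containment you are, by your own admission, presenting a plan rather than a proof: the antipodal identity $\gamma_{P,E_{-u}}=-\gamma_{P,E_u}$ constrains only levels at which both pushoffs are defined, and you explicitly leave unresolved both the degenerate direction ($m$ parallel to the direction of $A$) and the promotion of sidedness from the single separating fiber $L_0$ to all supporting directions. Your diagnosis of where the difficulty sits is accurate, but the configuration the statement must exclude --- $S_u$ and $S_{-u}$ lying on the \emph{same} side of their respective contact translates of $m$, both receding into levels the other pushoff never reaches --- is not addressed, and neither the identity nor Lemma \ref{lem:nocanyon} alone rules it out. The paper's route is blunter: write $S_u=\bigcap H_\iota$ over strict supporting translates; if $S_{-u}$ met $\operatorname{int}H_\iota$, it would cross the supporting line of $S_u$, so some plane $P_t$ strictly beyond the contact of $S_u$ would simultaneously contain the pushoff fiber through $E_u$ --- nonvertical, by Lemma \ref{lem:isolated} and the hemisphere constraint --- and a vertical fiber of $V^{-1}(-u)$, and two nonparallel lines in a common plane must intersect. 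In summary: your unboundedness argument is correct, your ``same strict supporting lines'' argument has a genuine logical hole in the ray case, and your containment argument is incomplete exactly where you say it is.
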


\begin{proof}
The unboundedness follows from Lemma \ref{lem:continffinite}.  The proof that the sets of strict supporting lines match is identical to the proof of Lemma \ref{lem:isolated}: if there is a strict support line for $S_u$ which does not strictly support $S_{-u}$, then fibers in the push-off from $S_u$ would intersect $V^{-1}(-u)$.  Moreover, $S_u$ may be written as the intersection of half-planes corresponding to the strict support directions.  If $S_{-u}$ is not contained in the intersection of the closure of the complements, then it intersects a support line of $S_u$, and again a pushoff fiber intersects $V^{-1}(-u)$.
\end{proof}

\begin{lem}
Suppose that there exist $\pm u \in U$ such that $S_u$ and $S_{-u}$ are each a ray.  Then $S_u$ and $S_{-u}$ are contained in a single line.
\end{lem}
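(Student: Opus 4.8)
The plan is to extract everything from the structural constraints on antipodal base spaces recorded in Lemma~\ref{lem:antipodal}. First I would place us in its hypotheses: a ray contains no affine line, whereas a $1$-parameter fibration has every base space equal to a union of parallel lines (cf.\ Theorem~\ref{thm:main}(d) and the proof of Lemma~\ref{lem:connected}), so our fibration is not $1$-parameter. Since $\pm u \in U$ as well, Lemma~\ref{lem:antipodal} governs the pair $\pm u$; in particular $S_u$ and $S_{-u}$ are forced to have the same set of strict supporting lines.

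The first consequence is that the two rays are parallel. By the third item of the Examples following Lemma~\ref{lem:support}, a ray $R$ is strictly supported by every element of $\RP^1$ except the single direction $d(R)$ of $R$ itself, so its set of strict supporting lines is $\RP^1 \setminus \{d(R)\}$. Applying this to both rays, the equality of strict supporting sets reads $\RP^1 \setminus \{d(S_u)\} = \RP^1 \setminus \{d(S_{-u})\}$, which forces $d(S_u) = d(S_{-u}) =: d$. Thus $S_u$ and $S_{-u}$ lie on parallel lines $\ell_u$ and $\ell_{-u}$, both of direction $d$, and it remains only to prove $\ell_u = \ell_{-u}$.

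For this I would invoke the second half of Lemma~\ref{lem:antipodal}, using a representation of $S_u$ by strict supporting half-planes. Normalizing coordinates in $u^\perp$ so that $S_u = \{(s,0) : s \ge 0\}$ and $d$ is the $x$-axis, let $H_\theta$ (for $\theta \in (0,\pi)$) be the closed half-plane through the origin whose boundary has direction $\theta$ and which contains $S_u$. Each $\partial H_\theta$ is a strict supporting line, and the constraints coming from $\theta = \tfrac{\pi}{2}$ together with the limits $\theta \to 0^+$ and $\theta \to \pi^-$ already cut the plane down to $\bigcap_\theta H_\theta = S_u$. Lemma~\ref{lem:antipodal} then gives $S_{-u} \subseteq \bigcap_\theta \overline{H_\theta^{\,c}}$, and the same three constraints applied to the complementary half-planes collapse this intersection to $\{(s,0) : s \le 0\}$, the opposite ray on the \emph{same} line $\ell_u$. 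Therefore $S_{-u} \subset \ell_u = \ell_{-u}$, and both rays lie in one line.

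The step I expect to demand the most care is the representation argument just sketched. The subtlety is that the only direction absent from the strict supporting set of a ray is precisely $d$, which is exactly the direction one would want in order to ``pin'' the ray to its line; since Lemma~\ref{lem:antipodal} is proved only for representations assembled from strict supporting directions, I cannot use a single half-plane with boundary $\ell_u$. Instead $\ell_u$ must emerge as the limit of the $H_\theta$ as $\theta \to 0^+$ and $\theta \to \pi^-$, and the essential point is to check that both $\bigcap_\theta H_\theta$ and the complementary intersection $\bigcap_\theta \overline{H_\theta^{\,c}}$ collapse exactly to opposite rays rather than to a wider wedge. This is where the monotonicity underlying Lemma~\ref{lem:antipodal} (ultimately the No Canyon Lemma) is doing the real work.
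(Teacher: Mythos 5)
Your proposal is correct and follows essentially the same route as the paper: represent the ray $S_u$ as an intersection of supporting half-planes and invoke Lemma~\ref{lem:antipodal} to trap $S_{-u}$ in the intersection of the closed complementary half-planes, which collapses to the opposite ray on the line through $S_u$. In fact you are more careful than the paper's one-line argument, since you correctly note that the half-planes with boundary direction $d(S_u)$ are not strict supports and must be recovered as limits of the family $H_\theta$, and you verify the non-$1$-parameter hypothesis of Lemma~\ref{lem:antipodal}, both of which the paper leaves implicit.
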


\begin{proof}
Suppose otherwise, that there exists a fibered half-plane with direction $u$ and a fibered half-plane with direction $-u$.  Let $H_\iota$ be the supporting half planes whose intersection gives the ray $S_u$.  Because $S_{-u}$ must lie in the intersection $\bar{H}_{\iota}^c$, $S_{-u}$ must be in the line determined by the ray $S_u$.
\end{proof}

\begin{lem}
Suppose that there exist $u_0, u_1 \in U$ not antipodal with both $S_{u_i}$ noncompact.  Let $m_i \in \RP^1$ be the element obtained by projecting $u_{1-i}$ to $u^\perp_i$.  Then $m_i$ either strictly supports or nonstrictly supports $S_{u_i}$.
\end{lem}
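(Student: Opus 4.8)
By the symmetry $0 \leftrightarrow 1$ it suffices to treat $i = 0$ and show that $m_0$, the projection of $u_1$ onto $u_0^\perp$, supports $S_{u_0}$. First I note that $m_0$ is a genuine element of $\RP^1$: since $u_0$ and $u_1$ are distinct and non-antipodal we have $u_1 \neq \pm u_0$, so $u_1 \notin \R u_0$ and the projection is nonzero. Recalling the trichotomy $\RP^1 = \mathscr{S}_0 \sqcup \mathscr{N}_0 \sqcup \mathscr{I}_0$ of Lemma \ref{lem:support} (subscripts referring to $S_{u_0}$), the assertion ``$m_0$ strictly or non-strictly supports $S_{u_0}$'' is exactly the assertion $m_0 \notin \mathscr{I}_0$. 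So the whole problem reduces to producing a \emph{single} translate of the direction $m_0$ inside $u_0^\perp$ that misses $S_{u_0}$.

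To build such a translate I would use a forbidden-plane argument. Since $u_1 \in U$ there is a fiber $\ell_1$ with direction $u_1$; form the $2$-plane $\Pi \coloneqq \ell_1 + \R u_0$, a translate of $P = \Span\{u_0, u_1\}$. The key observation is that $\Pi$ can meet no fiber of direction $u_0$ whatsoever. Indeed, $\Pi$ already contains the $u_1$-fiber $\ell_1$, and because $u_0$ lies in the direction space $P$ of $\Pi$, any $u_0$-fiber meeting $\Pi$ at even one point would be entirely contained in $\Pi$; that would put two nonparallel fibers in a single plane, which is impossible. Hence $\Pi \cap V^{-1}(u_0) = \emptyset$.

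It remains to push this down to $u_0^\perp$. Let $\pi$ denote orthogonal projection onto $u_0^\perp$, whose kernel is $\R u_0$. Then $\pi(\Pi)$ is the line $L = \pi(\ell_1) + \R\,\pi(u_1) = \pi(\ell_1) + \R\,m_0$, a translate of the direction $m_0$. I claim $L \cap S_{u_0} = \emptyset$: if some $p \in L \cap S_{u_0}$, then $p \in u_0^\perp$ and $p \in \pi(\Pi)$, so there is $x \in \Pi$ with $x - p \in \ker \pi = \R u_0$; thus $x$ lies on the fiber $p + \R u_0$, which has direction $u_0$ since $p \in S_{u_0}$, contradicting $\Pi \cap V^{-1}(u_0) = \emptyset$. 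Therefore $L$ is a translate of $m_0$ disjoint from $S_{u_0}$, giving $m_0 \notin \mathscr{I}_0$ and hence $m_0 \in \mathscr{S}_0 \cup \mathscr{N}_0$, as desired.

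The argument is short, and its only real content is the observation in the second paragraph; the step I expect to require the most care in writing up is the projection bookkeeping of the third paragraph, namely confirming that disjointness of $\Pi$ from $V^{-1}(u_0)$ is \emph{equivalent} to disjointness of $L$ from $S_{u_0}$. This equivalence rests entirely on $u_0$ being a direction of $\Pi$, so that each $u_0$-fiber either lies in $\Pi$ or misses it, while $\pi$ collapses every $u_0$-fiber to its base point in $S_{u_0}$. I would also remark that the stated hypotheses that $S_{u_0}$ and $S_{u_1}$ be noncompact are not actually used; only $u_1 \in U$ and $u_1 \neq \pm u_0$ enter the proof.
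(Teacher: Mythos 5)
Your proof is correct and is essentially the paper's argument: the paper's one-line proof is the same contrapositive, observing that if every translate of $m_i$ intersects $S_{u_i}$, then any fiber of direction $u_{1-i}$ would intersect $V^{-1}(u_i)$, which is exactly your projection argument (your plane $\Pi$ and the claim $\Pi \cap V^{-1}(u_0) = \emptyset$ are just a more explicit packaging of it). Your closing remark is also accurate --- the noncompactness hypotheses play no role in the paper's proof either; they merely situate the lemma within the discussion of multiple noncompact base spaces.
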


\begin{proof}
If every translate of $m_i$ intersects $S_{u_i}$, then any fiber with direction $u_{1-i}$ intersects $V^{-1}(u_i)$.
\end{proof}

\begin{lem}
\label{lem:nonstrictantipodal}
Suppose that there exist $\pm u_0, u_1 \in U$ with $S_{u_1}$ unbounded.  Let $m \in \RP^1$ be the element obtained by projecting $u_{1}$ to $u^\perp_0$.  Then $m_i$ must nonstrictly support $S_{\pm u_0}$.
\end{lem}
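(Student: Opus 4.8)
The plan is to argue by contradiction: assuming $m$ strictly supports $S_{u_0}$, I will trap every fiber of direction $u_1$ in a bounded slab with bounded cross-sections, forcing $S_{u_1}$ to be compact against the hypothesis.

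First I would record the reductions. We may assume the fibration is not $1$-parameter (the $1$-parameter case is degenerate and handled directly), so that Lemma~\ref{lem:antipodal} applies and $S_{u_0},S_{-u_0}$ are unbounded; also $u_1\neq\pm u_0$, since otherwise the projection $m$ vanishes. As $(-u_0)^\perp=u_0^\perp$, projecting $u_1$ there yields the same $m$, so the preceding lemma, applied to $(u_0,u_1)$ and to $(-u_0,u_1)$, already shows that $m$ supports each of $S_{u_0}$ and $S_{-u_0}$ (no translate of $m$ meets all of either). It remains only to exclude strict support. Suppose then that $m$ strictly supports $S_{u_0}$; by Lemma~\ref{lem:antipodal} it strictly supports $S_{-u_0}$ as well, and the two sets lie on opposite sides of their common $m$-supporting line.

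Next I would choose coordinates with $u_0$ vertical and $m$ horizontal inside $u_0^\perp$, and let $\pi$ measure height along $m^\perp$ in $u_0^\perp$ — equivalently along $P^\perp$, where $P\coloneqq\Span\{u_0,m\}$. Since $m$ strictly supports each set, neither recession cone contains the direction $m$ (else the $m$-supporting line would meet the set in an unbounded ray), so $S_{u_0}$ recedes downward and $S_{-u_0}$ upward, giving $\pi(S_{u_0})=(-\infty,0]$ and $\pi(S_{-u_0})=[c,\infty)$ for some $c\ge 0$ by the opposite-side statement of Lemma~\ref{lem:antipodal}. Because $u_1\in P$ with $u_1\perp P^\perp$, every fiber of direction $u_1$ lies in a single translate $P_t$, i.e. at a fixed height $t$; the $\pm u_0$-fibers are the vertical lines over $S_{u_0}$ and $S_{-u_0}$, each lying at the height of its base point. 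If such a $\pm u_0$-fiber shared the height $t$, it and the $u_1$-fiber would be two nonparallel coplanar lines in $P_t$ and hence meet — impossible, as distinct fibers are disjoint. Thus $t\notin\pi(S_{u_0})\cup\pi(S_{-u_0})=(-\infty,0]\cup[c,\infty)$, forcing $t\in(0,c)$; in particular $c>0$ (otherwise no $u_1$-fiber exists, contradicting $u_1\in U$) and $S_{u_1}$ lies in the slab $\{0<\pi<c\}$.

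The final step, which I expect to be the crux, upgrades this confinement to compactness. Strict support makes $P$ disconnected by fibers: the translate of $P$ through the bounded set $S_{u_0}\cap\pi^{-1}(0)$ is cut by a bounded strip of $u_0$-fibers. Hence every translate $P_t$ is disconnected by the fibers it contains, and — all fibers in a plane being parallel — the $u_1$-fibers at each height form a bounded strip, so each slice of $S_{u_1}$ is bounded. A convex set (Lemma~\ref{lem:connected}) lying in a slab of finite height with every slice bounded is itself bounded, so $S_{u_1}$ is compact, a contradiction. Interchanging $u_0$ and $-u_0$ rules out strict support of $S_{-u_0}$, and with the preceding lemma this yields that $m$ nonstrictly supports both $S_{\pm u_0}$. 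The delicate point is the trapping: it is exactly the opposite-side, half-line structure from Lemma~\ref{lem:antipodal}, unavailable without the antipodal pair, that simultaneously confines $u_1$ to a bounded band and forces the slices to be bounded; were the two projected ranges free to overlap, $S_{u_1}$ could escape to infinity and the argument would collapse.
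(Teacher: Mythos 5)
Your reductions, your use of Lemma~\ref{lem:antipodal} to get the opposite-side, half-line height structure $\pi(S_{u_0})=(-\infty,0]$, $\pi(S_{-u_0})=[c,\infty)$, and your trapping of every $u_1$-fiber at a height in $(0,c)$ are all correct (and the trapping is actually the easy part). The genuine gap is the step you yourself flag as the crux: \emph{``every translate $P_t$ is disconnected by the fibers it contains, and --- all fibers in a plane being parallel --- the $u_1$-fibers at each height form a bounded strip.''} This is a non sequitur. ``Disconnected by fibers'' means $P_t - A_t$ is disconnected; it does not bound $A_t$. Concretely, at some height $t_0$ the $u_1$-fibers could fill a closed half-plane (the slice of $S_{u_1}$ a ray) while a disjoint family of fibers with direction $-u_1$ sits across a gap in the same plane: all fibers in $P_{t_0}$ are still parallel as unoriented lines, and $P_{t_0}-A_{t_0}$ can still be disconnected. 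You never consider the possibility $-u_1 \in U$, and nothing you invoke excludes it --- indeed configurations with $S_{\pm u_1}$ a pair of rays in a line do occur in the paper's exotic examples, so ruling this out here requires an argument. One can close the hole, but it takes real work: use the fact, established before Lemma~\ref{lem:ppp}, that the fiber-following homeomorphisms $P_0 - A_0 \to P_t - A_t$ preserve the number of connected components. Since $c>0$, the plane $P_0$ contains only the $u_0$-fibers over $S_{u_0}\cap\pi^{-1}(0)$, an interval by convexity (Theorem~\ref{thm:main}(a)), so every $P_t - A_t$ has exactly two components; whereas if some slice of $S_{u_1}$ were a ray, then convexity of slices together with Lemma~\ref{lem:antipodal} applied to the pair $\pm u_1$ (both $S_{\pm u_1}$ are trapped in the slab, so their recession directions are horizontal and all their nonempty slices are rays) forces $P_{t_0}-A_{t_0}$ to have at most one component, a contradiction. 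As written, though, your proof is incomplete exactly where it needed to be strongest.

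For comparison, the paper's proof bypasses the slab picture: strict support makes $P=\Span\{u_0,u_1\}$ disconnected by fibers, so the pushoff $\gamma_{P,E}$ is defined, continuous, and monotone (Lemma~\ref{lem:nocanyon}); the closed great semicircle through $\pm u_0$ and $u_1$ then lies in $U$, and every direction on it besides $\pm u_0$ --- in particular $u_1$ --- has bounded base, contradicting the hypothesis directly. Your height-trapping route is genuinely more elementary in that it avoids the pushoff machinery, but both routes ultimately hinge on converting ``disconnected by fibers'' into boundedness of the fiber families in each translate, which is the point you elided. Two smaller remarks: your claim that the $1$-parameter case is ``degenerate and handled directly'' is not right --- the statement can actually fail for $1$-parameter fibrations (take $\gamma$ oscillating between $u_0$ and $-u_0$, so that $S_{u_0}$ is a union of strips unbounded transversally in both directions and $m$ supports nothing); the non-$1$-parameter hypothesis must simply be inherited from the surrounding section, as you need anyway for Lemma~\ref{lem:antipodal}. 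And your closing sentence identifies the trapping as the delicate point, when in fact the slice-boundedness is.
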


\begin{proof}
If $m_i$ strictly supports $S_{\pm u_0}$, then $P = \Span \left\{ u_0, u_1 \right\}$ is disconnected by fibers, and the closed great semicircle through $\pm u_0$ and $u_1$ is a subset of $U$.  Moreover, every element besides $\pm u_0$ must have bounded base (otherwise $P$ would not be disconnected by fibers).
\end{proof}

It seems plausible that the pushoff map could be utilized further to Conjecture \ref{conj:two}, that multiple pairs of antipodal points cannot exist outside of a $1$-parameter fibration.  For example, if there exist multiple pairs $\pm u_0, \pm u_1 \in U$, then combining the results of Lemmas \ref{lem:antipodal} and \ref{lem:nonstrictantipodal} gives fairly severe restrictions on possible base spaces $S_{\pm u_i}$.

\section{Fibrations and contact structures}
\label{sec:contact}
Here we discuss the relationship between line fibrations and contact structures.  We begin with the proof of Theorem \ref{thm:nononzero}.

\begin{proof}[Proof of Theorem \ref{thm:nononzero}]

Given a smooth fibration of $\R^3$ by oriented lines, let $p \in \R^3$ be a point with fiber direction $u$, and consider $B$ defined in a neighborhood $E$ of $p$ in $u^\perp$.  Let $u^\perp_t$ be the translate of $u^\perp$ by distance $t$ for $t \neq 0$.  The map $f : E \to u^\perp_t : q \mapsto q + tB(q)$, which sends a point $q \in E$ to the intersection with $u^\perp_t$ of the fiber through $q$, is a diffeomorphism onto its image, as its inverse could be defined in exactly the same manner.  Thus $df_p = \operatorname{Id} + tdB_p$ is an invertible linear map for all $t \neq 0$, hence $dB_p$ has no nonzero real eigenvalues.

Regarding the final assertion of Theorem \ref{thm:nononzero}: if $V$ is not nondegenerate at $p$, then the dimension of its kernel is at least $2$, so the third eigenvalue must be real, and it cannot be nonzero.
\end{proof}

\begin{proof}[Proof of Theorem \ref{thm:contact}]  We adopt the notation of \cite{Harrison2}, in which the contact assertion was shown for nondegenerate fibrations.  Let $p \in \R^3$, let $\alpha$ be the $1$-form dual to $V$, and let $B : E \to \R^2$ be defined as above.  Let $\R^2 \subset \R^3$ be the linear subspace parallel to $u^\perp$.  For $B(p) \in \R^2$ and $h \in T_pE$, we use bold letters ${\bf B}(p)$ and ${\bf h}$ to represent the associated (by inclusion) vectors in $\R^3$.  This allows us to write, for $q \in E$, 
\[
{\bf B}(q) = \frac{1}{\langle V(q), V(p) \rangle}V(q) - V(p).
\]
We compute
\[
d{\bf B}_q h = \frac{1}{\langle V(q), V(p) \rangle}dV_q{\bf h} - \frac{\langle dV_q {\bf h}, V(p) \rangle}{\langle V(q), V(p) \rangle^2}V(q),
\]
where $h \in T_q E$.  Evaluating at $q = p$ yields
\[
d{\bf B}_p h = dV_p {\bf h},
\]
as $V$ is a unit vector field, and $dV_p {\bf h}$ is orthogonal to $V(p)$.

Define the quadratic form $Q(h) = \det\left( \ h \ \ dB_p h \ \right)$.  By the computation above, and by the fact that ${\bf h}$ and $d{\bf B}_p h$ are orthogonal to the oriented unit vector $V(p)$, we may write:
\[
Q(h) = \det\big( \ h \ \ dB_p h \ \big) = \det\big( \ {\bf h} \ \ dV_p{\bf h} \ \ V(p) \ \big) = \langle {\bf h} \times dV_p{\bf h}, V(p) \rangle,
\]
so that
\begin{align}
\label{eqn:trace}
\operatorname{trace}(Q) = \langle \operatorname{curl}(V(p)), V(p) \rangle = \ast (\alpha(p) \wedge d\alpha(p)).
\end{align}

In case $dB_p$ has rank $2$, $V$ is nondegenerate and the assertion was shown in \cite{Harrison2}.  In case $dB_p$ has rank $1$, it follows from Theorem \ref{thm:nononzero} that its only real eigenvalues are zero, hence it has zero trace and zero determinant, but it is not the zero operator.  Thus in any local coordinates it has one of the following forms for $a$ and $b$ nonzero:

\[
\left( \begin{array}{cc} a & -b \\ \frac{a^2}{b} & -a \end{array} \right), \ \ \left( \begin{array}{cc} 0 & b \\ 0 & 0 \end{array}\right), \ \ \left( \begin{array}{cc} 0 & 0 \\ b & 0 \end{array} \right).
\]

Computing $\operatorname{trace}(Q) = \operatorname{trace}(h \mapsto \det\big( \ h \ \ dB_p h \ \big))$ in these three cases yields, respectively, $\frac{a^2+b^2}{b}$, $-b$, and $b$; any of which is nonzero, hence the form is contact by (\ref{eqn:trace}).

Conversely, if $V$ is not semidegenerate, then $\nabla V_p$ is $0$ at some point, as is $d\alpha(p)$, and the plane distribution is not contact.

It remains to show that the contact structure is tight.  It was shown in \cite{Harrison2} that if a fibration corresponds to a contact structure, and if there exists any fiber which admits no parallel fiber, then the contact structure is tight.

Note moreover that if a fibration induces a contact structure, then $V$ is not semidegenerate at any point, so no $S_u$ may have an interior point.  Therefore each $S_u$ is either a point, a line segment, a ray, or a union of one or more lines.  If any $S_u$ is a point, the above logic applies and the contact structure is tight.  If any $S_u$ is a line, then the fibration is $1$-parameter and embeds in the standard contact structure.

We show that there are no other possibilities.  This was recently shown by Becker and Geiges \cite{BeckerGeiges} using a different argument.  Assume for contradiction that every $S_u$ is either a line segment or a ray.  Either way, every point $p \in \R^3$ is a critical point of $V\big|_P$, where $P$ is any plane transverse to $V(p)$.  Then applying Sard's theorem to the restriction yields that $V(P)$, and hence $U$, is measure zero.  If some $S_u$ is a line segment, then we may push off in any direction to see that $u$ is an interior point of $U$, a contradiction.  On the other hand, if every $S_u$ is a ray, then we may choose $u'$ and push off of the boundary point of $S_{u'}$, in the direction of any strict support plane $P$, to obtain fibers in $P_t$ which disconnect $P_t$, contradicting that every $S_u$ is a ray.
\end{proof}

\section{Examples}
\label{sec:examples}

We will generate examples of nonskew fibrations with the use of the following result.  Given $E$ an open subset of $P_0 \simeq \R^2$ (one may assume that $E$ is an open half-plane, a strip between two parallel lines, or $P_0$ itself), consider a map $B : E \to \R^2$ and the collection of lines $\left\{ (p,0) + t(B(p),1) \ | \ p \in E \right\}$.

\begin{lem}
\label{lem:harrison}
Let $E \subset \R^2$ and $B : E \to \R^2$ be continuous.  Then the collection of lines $\left\{ (p,0) + t(B(p),1) \ | \ p \in E \right\}$ fiber $\R^3 - (P_0 - E)$ if and only if
\begin{itemize}
\item for any distinct points $p$ and $q$ in $E$, $p-q$ is not a multiple of $B(p) - B(q)$, and 
\item if $p_n$ is a sequence of points in $E$ with no accumulation points in $E$, then $|p_n + tB(p_n)| \to \infty$ for all fixed $t \neq 0$.
\end{itemize}
\end{lem}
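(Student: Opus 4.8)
The plan is to recast the entire statement in terms of the single family of maps $F_t : E \to \R^2$ defined by $F_t(p) = p + tB(p)$, where $\R^2$ is identified with the horizontal plane at height $t$. The point of the line through $p$ at parameter $t$ is exactly $(F_t(p), t)$, and each line meets $P_0$ only at $(p,0)$ with $p \in E$, so every line lies in $\R^3 - (P_0 - E)$ automatically. Hence the lines fiber the region precisely when every point of $\R^3 - (P_0 - E)$ lies on exactly one line; slicing by height, this is equivalent to saying that each $F_t$ is injective and that, for every $t \neq 0$, $F_t$ is onto all of $\R^2$ (the slice $t = 0$ is covered tautologically, since $F_0 = \mathrm{id}$ has image $E$). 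Thus the proof splits into an injectivity/disjointness statement and a surjectivity/covering statement.

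First I would dispose of disjointness. A direct computation shows that the lines through distinct $p, q \in E$ meet if and only if $p + tB(p) = q + tB(q)$ for some $t$, i.e.\ $p - q = -t(B(p) - B(q))$; equivalently, $p - q$ is a real multiple of $B(p) - B(q)$. Therefore the lines are pairwise disjoint --- equivalently, every $F_t$ is injective --- exactly when the first bulleted condition holds. This pins down the role of the first condition and reduces the lemma to showing that, granting the first condition, the second condition is equivalent to surjectivity of $F_t$ onto $\R^2$ for every $t \neq 0$.

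For the covering statement I would argue topologically. Fix $t \neq 0$. Since $F_t$ is continuous and injective on the open set $E \subset \R^2$, Invariance of Domain shows that $F_t(E)$ is open and that $F_t$ is a homeomorphism onto its image. If the second condition holds, I would show $F_t(E)$ is also closed: given $F_t(p_n) \to y$, the quantities $|F_t(p_n)|$ stay bounded, so by the contrapositive of the second condition the $p_n$ must have an accumulation point $p^* \in E$; passing to a subsequence and using continuity of $B$ gives $y = F_t(p^*) \in F_t(E)$. A nonempty clopen subset of the connected space $\R^2$ is everything, so $F_t(E) = \R^2$ and the lines cover the region. Conversely, if the lines fiber the region, then each $F_t$ with $t \neq 0$ is a continuous bijection $E \to \R^2$, hence a homeomorphism with continuous inverse by Invariance of Domain; then for any $p_n$ with no accumulation point in $E$, a bounded subsequence $F_t(p_{n_k}) \to y$ would force $p_{n_k} = F_t^{-1}(F_t(p_{n_k})) \to F_t^{-1}(y) \in E$, a contradiction, so $|p_n + tB(p_n)| \to \infty$ and the second condition holds.

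The main obstacle is the covering direction, and the conceptual heart of the lemma is recognizing that the second bulleted condition is precisely a \emph{properness} hypothesis on each $F_t$: it is exactly what is needed to upgrade the openness of $F_t(E)$ (furnished by Invariance of Domain) to closedness, after which connectedness of $\R^2$ forces surjectivity. Once this is seen, everything else is bookkeeping with sequences together with the fact that $F_0$ trivially covers $E$; I expect no analytic estimates to be required, only continuity of $B$ and the injectivity supplied by the first condition.
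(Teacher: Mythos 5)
Your proposal is correct and follows essentially the same route as the paper: the paper's sketch also works with $f_t(p) = p + tB(p)$, identifies the first bullet with injectivity, uses Invariance of Domain to get an open image, and uses the second bullet (properness) to get a closed image, whence surjectivity onto the connected plane $P_t$. You additionally write out the converse direction, which the paper's ``main idea'' sketch leaves implicit, but this is routine bookkeeping along the same lines rather than a different approach.
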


Observe that when $E = \R^2$, $B$ describes a fibration of $\R^3$ by lines.

The analogous lemma in the case of skew fibrations was shown in \cite{Harrison}, and the proof is nearly identical, so we offer only the main idea.  Consider the map $f_t : E \to \R^2 : p \mapsto p + tB(p)$, and assume that the two items hold.  The first bullet point is equivalent to injectivity of the map $f_t$.  Then $f_t$ is continuous and injective and so it is a homeomorphism onto its image; in particular the image is open.  Finally, it is straightforward to check from the second bullet point that the image of $f_t$ is closed for each $t \neq 0$.  So for $t \neq 0$, each $f_t$ is surjective, and each point in the plane $P_t$ is hit by a line from the collection.

\begin{example} For $E = \R^2$ and $H(p) = \pm i p$, the resulting fibration is the standard Hopf fibration.
\end{example}

We compare the Hopf map to the first bullet point of Lemma \ref{lem:harrison}.  The fibers through $p$ and $q$ do not intersect provided that $p - q$ is not a multiple of $B(p) - B(q)$.  Note that in the Hopf case, $H(p) - H(q) = i(p-q)$ is always perpendicular to $p - q$.  Thus we can generate line fibrations of the form $B = H \circ f$ using maps $f : \R^2 \to \R^2$ satisfying the properties that 
\begin{enumerate}
\item for all $p \neq q$ with $f(p) \neq f(q)$, $p - q$ is not orthogonal to $f(p) - f(q)$, and
\item outside of some bounded set, the behavior of $f$ matches that of the identity function.
\end{enumerate}
Indeed, each item guarantees the respective bullet point for $H \circ f$.  Although the second item is somewhat nonspecific, it is clearly satisfied in the examples given below.

\begin{example}  Let $f: \R^2 \to \R^2$ be the polar map $(r,\theta) \mapsto (\max\left\{0,r-1\right\}, \theta)$, that is, $f$ maps the closed unit disk to the origin and maps every larger concentric circle to a circle of one less radius.  The composition $H \circ f$ generates the ordinary Hopf fibration, except that the vertical line in the center is now a vertical cylinder of radius $1$.  We may show the first bullet point above, that there exists no vector $p_1-p_2$ for which $f(p_1) - f(p_2) \neq 0$ is perpendicular to $p_1 - p_2$.  Write $p_i = (r_i \cos \theta_i, r_i \sin \theta_i)$.  We will assume that $r_i > 1$, so $f(p_i) = ((r_i-1)\cos \theta_i, (r_i - 1)\sin \theta_i)$.  Then we have
\[
\langle p_1 - p_2, f(p_1) - f(p_2) \rangle = r_1(r_1 - 1) + r_2(r_2-1) + r_1(1 - r_2)\cos(\theta_1 - \theta_2) + r_2(1-r_1)\cos(\theta_1 - \theta_2),
\]
which one can check is positive when $p_1 \neq p_2$.  Indeed, one can first explicitly check positivity when $r_1 = r_2$ and $\theta_1 \neq \theta_2$.  Next, for $r_1 \neq r_2$ and $\cos(\theta_1 - \theta_2) \leq 0$, each individual nonzero term is positive.  Finally, if $r_1 \neq r_2$ and $\cos(\theta_1 - \theta_2) > 0$, then the expression is greater than or equal to
\[
[r_1(r_1 - 1) + r_2(r_2-1) + r_1(1 - r_2) + r_2(1-r_1)]\cos(\theta_1 - \theta_2) = (r_1-r_2)^2\cos(\theta_1-\theta_2) > 0.
\]
\end{example}

\begin{example}
The above example can be made $C^\infty$ by redefining the function on the set $r > 1$ as $(r,\theta) \mapsto (e^{-\frac{1}{(r-1)^2}}, \theta)$.
\end{example}

\begin{example}
Let $C$ be a compact, strictly convex set with nonempty interior and $C^1$ boundary.  Define $f$ identically zero on $C$.  For $p \in \R^2 - C$, write $p = c + tn(c)$, where $c \in \partial C$, $n(c)$ is the outward-pointing unit normal vector at $c$, and $t > 0$; then let $f(p) = tn(c)$.  As in the previous examples, the behavior of $f$ near infinity resembles that of the identity, so we only must check the first condition on $f$.

First let $p \notin C$ and $q \in C$.  We write $p = c + tn(c)$ for some $c \in C$.  We must check that $p - q$ is not orthogonal to $f(p) - f(q) = tn(c)$.  Since $p$ lies in the half-plane $H$ supporting $C$ at $c$, and the boundary of this half-plane is orthogonal to $n(c)$, the line through $p$ orthogonal to $n(c)$ is fully contained in $H$, hence may not contain $q$.

Now for $i = 1, 2$ consider $p_i = c_i + t_in(c_i)$ distinct points not in $C$, and suppose that the fibers through $p_1$ and $p_2$, of the fibration defined by $B = H \circ f$, intersect.  Then $p_1 - p_2 = \lambda i (t_1n(c_1) + t_2n(c_2))$ for some $\lambda \neq 0$, hence $p_1 - p_2 = \lambda(t_1 \gamma'(s_1) + t_2\gamma'(s_2))$, where $\gamma : I \to \partial C$ is some unit-speed parametrization of the boundary with $\gamma(s_i) = c_i$, and chosen with the orientation such that $\lambda$ is negative.  Then $p_1 - \lambda t_1 \gamma'(s_1) = p_2 - \lambda t_2 \gamma'(s_2)$.  Let $r$ be this point on both sides of the equation.  Geometrically, $r$ is the third point of the two right triangles $\Delta(c_i p_i r)$, as decpited in Figure \ref{fig:notintersect}.

\begin{figure}[h!t]
\centerline{
\includegraphics[width=3.5in]{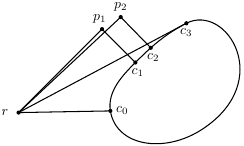}
}
\caption{Showing two fibers of $B$ will not intersect}
\label{fig:notintersect}
\end{figure}

We have

\[
\frac{\mbox{length}(c_i,p_i)}{\mbox{length}(p_i,r)} = \frac{t_i}{-\lambda t_i} = - \frac{1}{\lambda},
\]
for each $i$, and we would like to obtain a contradiction.

As depicted in Figure \ref{fig:notintersect}, let $c_0$ be the point of $\partial C$ at which the normal vector hits $r$, and let $c_3$ be the point of $\partial C$ so that the negatively-oriented tangent ray hits $r$.  Then $c_1$ and $c_2$ must lie on the arc of $\partial C$ traversed positively by $\gamma$ from $c_0$ to $c_3$.  Let $s_i = \gamma^{-1}(c_i)$.

For $s \in (s_0,s_3)$, let $p_s$ be the point of $\R^2$ such that $\Delta(\gamma(s) p_s r)$ is a right triangle.  Define $\beta : (s_0,s_3) \to \R$ as the angle $\angle p_s r \gamma(s)$.  Then $\beta$ increases strictly monotonically from $0$ to $\frac{\pi}{2}$, hence so does $\tan \beta$.  Thus it is impossible that $\tan \beta = -\frac{1}{\lambda}$ at two points.
\end{example}

We suspect that the above example can be modified to allow for $C$ to be a line segment or ray.

\begin{example}
\label{ex:halfhalf}
We construct a fibration which is half of a Hopf fibration and half of a $1$-parameter fibration.  Consider the image of the $p_2$-axis for the standard Hopf fibration $H(p) = ip$.  The resulting fibers generate a helicoid which rotate (from the perspective of the origin), $\frac{\pi}{2}$ in the positive $p_2$ direction and $\frac{\pi}{2}$ in the negative $p_2$ direction.  This helicoid disconnects $\R^3$, so that on either side there is a ruled open solid.  We may replace the entire "right" half of the fibration (for $p_1$ positive) with a copy of the same helicoid at every $p_1$.  That is, the fibration is the Hopf fibration for $p_1$ negative, and for $p_1$ positive it is a fibration by a one-parameter family of half-planes.  Algebraically, $f(p_1,p_2)$ is the identity for $p_1 \leq 0$ and maps to $(0,p_2)$ for $p_1 \geq 0$.
\end{example}

\begin{example}
Similar to the above example, instead of replacing the right side, we may push the two halves apart some finite distance and insert a fat helicoid.  The generating map is $f(p_1,p_2) = (p_1 - 1, p_2)$ for $p_1 \geq 1$, $(p_1 + 1, p_2)$ for $p_1 \leq -1$, and $(0,p_2)$ in between.
\end{example}

\subsection{Exotic example}

By construction, all of the fibrations above have convex Gauss image.  The following example is not $1$-parameter, but nevertheless contains a pair of antipodal points in the Gauss image.
\label{sec:exotic}

Consider the horizontal plane $P \subset \R^3$ with rectangular coordinates $(x,y)$.  Cover the half-plane $x \geq \frac{\pi}{2}$ with lines pointing in the positive $y$ direction, and cover the half-plane $x \leq - \frac{\pi}{2}$ with lines pointing in the negative $y$ direction.  On the strip $E \coloneqq \left\{ (x,y) \in P \ \big| \ -\frac{\pi}{2} < x < \frac{\pi}{2} \right\}$, define $B(x,y) = (-y,\tan x)$, and consider the collection of parametrized oriented lines $(x,y,0) + t(B(x,y),1)$ emanating from points of $E$.  We aim to show that this defines a non-skew fibration with the image of the Gauss map an open hemisphere together with two antipodal points.

We first check Lemma \ref{lem:harrison}.  For $p \neq q$ in $E$,

\[
\det \left( \begin{array}{cc} p_1 - q_1 & -(p_2 - q_2) \\ p_2 - q_2 & \tan p_1 - \tan q_1 \end{array} \right) = (p_1 - q_1)(\tan p_1 - \tan q_1) + (p_2 - q_2)^2.
\]
By monotonicity of $\tan$, both summands are non-negative, and both equal $0$ if and only if $p=q$.  Therefore the lines emanating from $E$ do not intersect.

Now let us check that the collection of lines covers all of $\R^3$.  We must show that for fixed $t$, the map $(p_1,p_2) \mapsto (p_1,p_2,0) + t(B(p_1,p_2),1)$ is onto the plane $z = t$.  That is, the map $(p_1,p_2) \mapsto (p_1-tp_2,p_2+t\tan p_1)$ is onto for every fixed $t \neq 0$.  For this we must solve:
\begin{align*}
p_1 - tp_2 & = x \\
p_2 + t\tan p_1 & = y.
\end{align*}
By adding the first equation to a multiple of the second, we obtain
\begin{align}
\label{eqn:solvep1}
p_1 + t^2\tan p_1 = x + ty.
\end{align}
The map $(-\frac{\pi}{2},\frac{\pi}{2}) \to \R : p_1 \mapsto p_1 + t^2\tan p_1$ is a homeomorphism, thus we may solve (\ref{eqn:solvep1}) for $p_1$.  Then either equation gives a value for $p_2$.

Finally, let us check that the covering of lines is continuous at the points of $P - E$.  For this we must check that if $(x_n,y_n,z_n)$ is any convergent sequence of points with $z_n \to 0$ and $x_n \to C$ with $|C| \geq \pi$, then the directions $u_n$ of the fibers through $(x_n,y_n,z_n)$ converge to $\pm u \coloneqq \pm (0,1,0)$, where the sign depends on the sign of $C$.  Assume $x_n \to C \geq \pi$ and suppose that $z_n \neq 0$.  For each $n$ we may solve (\ref{eqn:solvep1}) to find a sequence $p_{n_1}$ such that $p_{n_1} + z_n^2\tan p_{n_1} = x_n + z_ny_n$.  By assumption, the limit of the right side is $C \geq \frac{\pi}{2}$.  Therefore, the limit of the left side exists and equals $C$.

Consider any convergent subsequence of $p_1^n$, and suppose it limits to $D$.  If $D < \frac{\pi}{2}$, then $\tan p_1^n$ converges, but since $z_n \to 0$, we obtain the limit of the left side is $D$, a contradiction.  So any convergent subsequence must converge to $\frac{\pi}{2}$.

We still must show that the sequence of directions does not diverge.  We see from (\ref{eqn:solvep1}) that $z_n^2 \tan p_{n_1}$ converges to something nonzero, so $\tan p_{n_1}$ diverges at the rate at which $z_n^2$ converges.  We also see that $z_np_{n_2}$ converges to zero, so $p_{n_2}$ diverges at the same rate at which $z_n$ converges.

Now

\[
u_n = \frac{(-p_{n_2}, \tan p_{n_1}, 1)}{\sqrt{ 1 + p_{n_2}^2 + \tan^2(p_{n_1})} }.
\]

Using the fact that the first component behaves as $\frac{1}{z_n}$ and the second behaves as $\frac{1}{z_n^2}$, this converges to $(0,1,0)$.

\section{Conclusion}

The main technique for studying nonskew fibrations is to utilize continuity of the parallel plane pushoff maps.  It is not clear whether we have used this technique to its full potential.  In particular, we would like to understand whether the map is well-defined in cases when the plane is not disconnected by fibers; for example, when can we push off of $S_u$ with respect to $P = \Span\left\{ u, m \right\}$ when $m$ is a nonstrict support line of $S_u$?  If the map is well-defined, must it be continuous?

We believe that this and other clever applications of the parallel plane pushoff could yield Conjectures \ref{conj:one} and \ref{conj:two} and complete the classification of nonskew fibrations in four categories: skew-like, $1$-parameter, half-and-half, and exotic.

We offer two other questions proposed to us by Albert Fathi.

What conditions on an arbitrary finite configuration of nonintersecting lines guarantee that the collection can be completed to a fibration?  Given a partition of $\R^2$ into closed convex sets, what conditions guarantee that these sets are the preimages of points for some map $B$ which corresponds to a fibration?

\bibliographystyle{plain}
\bibliography{bib}{}

\end{document}